\documentclass[10pt]{amsart}

\usepackage{amsmath,amssymb,amsthm,amsfonts,amstext}
\usepackage{tikz-cd}
\usepackage{dsfont}
\usepackage{enumitem,verbatim}
\usepackage[utf8]{inputenc}
\usepackage[english]{babel}
\usepackage[enableskew]{youngtab}
\usepackage{ytableau}
\usepackage{setspace}
\usepackage{dsfont}
\usepackage{bm}
\usepackage{tikz}
\usepackage{fullpage}
\usepackage{accents}
\usepackage[normalem]{ulem}
\usepackage{parskip}
\usepackage{upgreek}
\usepackage[font=small,labelfont=bf]{caption}
\usepackage{ragged2e}
\usepackage{hyperref}
\hypersetup{colorlinks=true}

\setcounter{MaxMatrixCols}{11}

\setlength{\textwidth}{\paperwidth}
\addtolength{\textwidth}{-2in}
\calclayout

\newtheorem{teo}{Theorem}[section]
\newtheorem{prop}[teo]{Proposition}
\newtheorem{cor}[teo]{Corollary}
\newtheorem{lemma}[teo]{Lemma}

\newtheorem{alg}[teo]{Algorithm}
\theoremstyle{definition}
\newtheorem{de}[teo]{Definition}
\newtheorem{rem}[teo]{Remark}

\newtheorem{ex}[teo]{Example}

\newtheorem*{teo*}{Theorem}
 
\def\a{\alpha}
\def\aa{\mathbf{a}}

\def\b{\beta}
\def\bb{\mathbf{b}}
\def\B{\mathcal{B}}

\def\e{\varepsilon}
\def\EE{\mathbf{E}}
\def\EEE{\mathcal{E}}

\def\g{\gamma}
\def\K{\mathcal{K}}
\def\l{\lambda}
\def\ll{\overline{\lambda}}
\def\f{\mathfrak{f}}
\def\m{\mathfrak{m}}

\def\Q{\mathbf{Q}}
\def\s{\sigma}
\def\t{\mathbf{t}}
\def\u{\mu}
\def\v{\nu}
\def\uu{\mathbf{u}}
\def\vv{\mathbf{v}}
\def\ww{\mathbf{w}}
\def\1{\mathbf{1}}

\def\w{\mathbf{w}}
\def\x{\mathbf{x}}
\def\y{\mathbf{y}}
\def\z{\mathbf{z}}
\def\sym{\operatorname{sym}}

\def\trop{\operatorname{trop}}
\def\conv{\operatorname{conv}}

\def\cone{\operatorname{cone}}
\def\tcone{\operatorname{tcone}}
\def\vspan{\operatorname{span}}

\def\Hom{\operatorname{Hom}}
\def\supp{\operatorname{supp}}

\def\B{\mathcal{B}}
\def\H{\mathbf{H}}
\def\HH{\mathcal{H}}

\def\F{\mathcal{F}}
\def\L{\mathbf{L}}
\def\M{\mathbf{M}}
\def\N{\mathbb{N}}
\def\P{\mathcal{P}}
\def\PP{\mathfrak{P}}
\def\R{\mathbb{R}}
\def\RR{\mathbf{R}}
\def\C{\mathbb{C}}
\def\s{\mathbf{s}}
\def\S{\Sigma}
\def\SS{\mathfrak{S}}
\def\SSS{\mathcal{S}}
\def\cS{\mathcal{S}}
\def\Sc{\mathbb{S}}
\def\cT{\mathcal{T}}

\def\rank{\operatorname{rank}}

\title{Power mean inequalities and sums of squares}
\author{Jose Acevedo}
\address{School of Mathematics, Georgia Institute of Technology, 686 Cherry Street Atlanta, GA 30332, USA}
\email{jacevedo@gatech.edu}

\author{Grigoriy Blekherman}
\address{School of Mathematics, Georgia Institute of Technology, 686 Cherry Street Atlanta, GA 30332, USA}
\email{greg@math.gatech.edu}

\thanks{The authors were partially supported by NSF grant DMS-1901950.}

\begin{document}
\maketitle 

\begin{abstract}
For fixed degree and increasing number of variables the dimension of the vector space of $n$-variate real symmetric homogeneous polynomials (forms) of degree $d$ stabilizes. We study the limits of the cones of symmetric nonnegative polynomials and symmetric sums of squares, when expressed in power-mean or monomial-mean basis. These limits correspond to forms with stable expression in power-mean (or monomial-mean) polynomials that are globally nonnegative (resp. sums of squares) regardless of the number of variables. We introduce partial symmetry reduction to describe the limit cone of symmetric sums of squares, and reprove a result of \cite{blekherman2020symmetric} that limits of symmetric nonnegative polynomials and sums of squares agree in degree $4$.  We use \emph{tropicalization} of the dual cones, which was first in the context of comparing nonnegative polynomials and sums of squares in \cite{blekherman2020tropicalization}, to show differences between cones of symmetric polynomials and sums of squares starting in degree $6$, which disproves a conjecture of \cite{blekherman2020symmetric}. For even symmetric nonnegative forms and sums of squares we show that the cones agree for degree at most $8$, and are different starting with degree 10.
We also find, via tropicalization, explicit examples of symmetric forms that are nonnegative but not sums of squares in the limit.
\end{abstract}


\section{Introduction}

Inequalities in symmetric polynomials are a classical subject going back to Newton and McLaurin.  Newton's inequalities state that $$\tilde{e}^2_k \geq \tilde{e}_{k-1}\tilde{e}_{k+1},$$ and Maclaurin's inequalities state that $$\tilde{e}_{k-1}^k\geq \tilde{e}_k^{k-1},$$ where $\tilde{e}_k$ is the $k$-th elementary symmetric mean $$\tilde{e}_k=\frac{\sum_{A\subset [n],\\ |A|=k}x^A}{\binom{n}{k}}.$$ 
Perhaps the best known inequality is the arithmetic mean-geometric mean inequality.
A thorough review of some classical inequalities is given in \cite{cuttler2011inequalities}. Notice that the above inequalities can be viewed as being independent of the number of variables, and in fact the number of variables was suppressed in the notation above. It is natural to ask what polynomial inequalities hold \emph{regardless of the number of variables}. 

Unfortunately, the answer to the above question depends on the way we identify symmetric polynomials in $n$ variables with symmetric polynomials in $n+1$ variables. For a thorough discussion we refer to \cite{blekherman2020symmetric}. It is also of interest to understand which symmetric nonnegative inequalities can be proved via a \emph{sum-of-squares decomposition}.

It is well-known that a symmetric polynomial can be expressed as a polynomial in power sum or power mean polynomials. We work with homogenenous polynomials (forms) which we express in the power-mean basis to address the question \emph{what forms with stable expression in power mean polynomials are globally nonnegative regardless of the number of variables, and how do they compare to forms that can be certified via sums of squares}? 

The relationship between nonnegative forms and sums of squares is a classical topic in real algebraic geometry. We will use $n$ to denote the number of variables, and $2d$ denote the degree. Hilbert showed that except for three cases, $n=2$, $2d=2$ and $n=3$, $2d=4$, there exist globally nonnegative forms that are not sums of squares. Equality between nonnegative forms and sums of squares transfers to symmetric forms, and as shown by Choi and Lam \cite{choi1977old}, and later Goel, Kuhlmann and Reznick \cite{goel2016choi}, there are no additional cases of equality.

However, it was shown in \cite{blekherman2020symmetric} that in degree 4 normalized symmetric forms that are nonnegative in any number of variables are sums of squares. The authors conjectured that this holds for any even degree $2d$. We disprove this conjecture for all $2d\geq 6$. For even forms (forms with all even exponents)  there is an additional case of equality $n=3$, $2d=8$ discovered by Harris \cite{harris1999real}.  Goel, Kuhlmann and Reznick showed that this is the only additional case of equality in \cite{goel2017analogue}. We show that for even forms of degree 8, normalized symmetric forms that are nonnegative in any number of variables are sums of squares, and equality fails for all higher degrees.

Our main advancement in analyzing sums of squares in arbitrary number of variables is a simplified framework via \emph{partial symmetry reduction}. An important outcome of our results is that it is actually simpler to understand symmetric polynomials that are sums of squares in any number of variables, than understanding symmetric sum of squares in a fixed number of variables $n$. There is an analogy between our framework and the framework used in the sum of squares method in extremal graph theory \cite{blekherman2020simple}. One can regard the partial symmetry reduction framework we develop as the case of partially labeled "graphs" where every edge contains only one vertex (see Section 2 of \cite{blekherman2020simple} for details). Our framework gives a quick proof of equality in the case $2d=4$, which was the main result of \cite{blekherman2020symmetric}, and the case of equality for even forms of degrees $6$ and $8$. 

To prove non-equality for $2d>4$ we consider \emph{tropicalization} of the cones dual to nonnegative forms and sums of squares. Tropicalization is a technique frequently used in complex algebraic geometry, but it has been recently shown to be useful in distinguishing between nonnegative polynomials and sums of squares \cite{blekherman2020tropicalization,blekherman2022moments}. Here we apply it to the limit cones, which describe nonnegative and sums of squares forms independent of the number of variables.

Our partial symmetrization framework allows us to find tropicalization of the cone dual to sums of squares using ideas similar to \cite{blekherman2020tropicalization}. We compare it to the tropicalization of the cone dual to nonnegative forms, whose tropicalization is computed analogously to \cite{blekherman2022moments}. Using tropicalization we also construct explicit examples of forms nonnegative for any number of variables, that are not sums of squares.

\subsection{Main Results in Detail}
\sloppy An $n$-variate polynomial $f(x_1,\dots,x_n)$ is \emph{symmetric} if the action of the symmetric group $\mathcal{S}_n$ that permutes variables leaves $f$ unchanged, i.e., $f(x_1,\dots,x_n)=f(x_{\sigma(1)},\dots,x_{\sigma(n)})$ for all $\sigma\in\mathcal{S}_n$. 
Let $\R[x_1,\dots,x_n]$ denote the ring of polynomials with real coefficients in $n$ variables. Let $H_{n,d}$ denote the vector space of $n$-variate forms of degree $d$ in $\R[x_1,\dots,x_n]$ and let $H_{n,d}^S$ denote its subspace of symmetric forms. Let $\S_{n,2d}$ denote the cone of forms in $H_{n,2d}$ which are sums of squares (observe these are sums of squares of forms in $H_{n,d}$), and let $\P_{n,2d}$ denote the cone of nonnegative forms in $H_{n,2d}$. Similarly define the cone of symmetric sums of squares $\S_{n,2d}^S$ and the cone of nonnegative symmetric forms $\P_{n,2d}^S$ in $H_{n,2d}^S$. Since every sum of squares of real polynomials is nonnegative we have $\S^S_{n,2d}\subseteq\P^S_{n,2d}$.

A natural way of assigning a symmetric form with $n+1$ variables to a form with $n$ variables is to \emph{resymmetrize}, i.e., for a symmetric form with $n$ variables $x_1,\dots,x_n$ symmetrize it with respect to the variables $x_1,\dots,x_{n+1}$. The \emph{symmetrization} of $f\in\R[x_1,\dots,x_n]$ is defined by 
\begin{align*}
\sym_n(f)=\frac1{n!}\sum_{\sigma\in\mathcal{S}_n}\sigma(f).
\end{align*}
For $m<n$ there is a natural inclusion $\varphi_{m,n}:H_{m,d}^S\to H_{n,d}^S$ that sends $f\mapsto\sym_n(f)$. These define a directed system with direct limit $H_{\infty,d}^\varphi$.
The vector spaces $H_{n,d}^S$ have the same dimension for $n\ge d$, namely $\dim(H_{n,d}^S)=\pi(d)$ the number of partitions of $d$, so also $\dim(H_{\infty,d}^\varphi)=\pi(d)$. 

Resymmetrization sends sums of squares to sums of squares and nonnegative forms to sums of nonnegative forms, hence $\varphi_{m,n}(\S_{m,2d}^S)\subseteq\S_{n,2d}^S$ and $\varphi_{m,n}(\P_{m,2d}^S)\subseteq\P_{n,2d}^S$. An explicit choice of coordinates on $H_{n,d}$ which aligns well with the maps $\varphi_{m,n}$ is the monomial mean basis in each $H_{n,2d}^S$ for $n\ge2d$. This choice of coordinates identifies $H_{n,2d}$ with $\R^{\pi(2d)}$ (we assume the bases have the lexicographic order on partitions), and there each sequence of cones is nested and increasing. More precisely, if 
\begin{align*}
\S_{n,2d}^\varphi&:=\left\{c\in\R^{\pi(2d)}:\sum_{\l\vdash2d}c_\l m_\l^{(n)}\in\S_{n,2d}^S\right\},\\
\P_{n,2d}^\varphi&:=\left\{c\in\R^{\pi(2d)}:\sum_{\l\vdash2d}c_\l m_\l^{(n)}\in\P_{n,2d}^S\right\}
\end{align*}
then $\S_{2d,2d}^\varphi\subseteq\S_{2d+1,2d}^\varphi\subseteq\cdots$ and $\P_{2d,2d}^\varphi\subseteq\P_{2d+1,2d}^\varphi\subseteq\cdots$ are non-decreasing sequences of closed, convex, containing no lines, full-dimensional, i.e., \emph{proper}, cones in $\R^{\pi(2d)}$. Let $\SS_{2d}^\varphi$ and $\PP_{2d}^\varphi$ be the closures of the limit sets of each sequence. The nestedness of the sequences $\{\S_{n,2d}^\varphi\}_{n\ge2d}$ and $\{\P_{n,2d}^\varphi\}_{n\ge2d}$ implies that
\begin{eqnarray}
\SS_{2d}^\varphi&=&\overline{\bigcup_{n\ge2d}\S_{n,2d}^\varphi},\\
\PP_{2d}^\varphi&=&\overline{\bigcup_{n\ge2d}\P_{n,2d}^\varphi}
\end{eqnarray}

We can also coordinatize with respect to the power mean basis. We call the form $p_r^{(n)}:=\frac{x_1^r+\dots+x_n^r}n$ the $r$-\emph{th power mean} and, for $\l\vdash d$, $p_\l^{(n)}=\prod_{i=1}^{\ell(\l)}p_{\l_i}$ the $\l$-th power mean. As with monomial means, the set $\{p_\l^{(n)}\}_{\l\vdash d}$ is a basis for $H_{n,d}^S$ when $n\ge d$. Sometimes we will avoid the superindex and write $p_\l$ instead of $p_\l^{(n)}$, and the same for monomial means when the set of variables is clear from the context. The power sums, and power means, satisfy the identity $p_\l p_\u=p_{\l\u}$ for any fixed number of variables. In the limit, monomial means satisfy the same identity (see Lemma \ref{gluing}).

As in \cite{blekherman2020symmetric} we also consider the sums of squares and nonnegative cones with respect to the power mean basis:
\begin{align*}
\S_{n,2d}^\rho&:=\left\{c\in\R^{\pi(2d)}:\sum_{\l\vdash2d}c_\l p_\l^{(n)}\in\S_{n,2d}^S\right\},\\
\P_{n,2d}^\rho&:=\left\{c\in\R^{\pi(2d)}:\sum_{\l\vdash2d}c_\l p_\l^{(n)}\in\P_{n,2d}^S\right\}.
\end{align*}
The sequences $\{\S_{n,2d}^\rho\}_{n\ge2d}$ and $\{\P_{n,2d}^\rho\}_{n\ge2d}$ are partially nested non-increasing. More precisely, if $m$ divides $n$ then $\S_{m,2d}^\rho\supseteq\S_{n,2d}^\rho$ and $\P_{m,2d}^\rho\supseteq\P_{n,2d}^\rho$ (see Proposition 2.6 in \cite{blekherman2020symmetric}). The limit sets of these cones then consist of the points that belong to all the cones in the sequence, i.e.,
\begin{eqnarray}
\SS_{2d}^\rho&=&\bigcap_{n\ge2d}\S_{n,2d}^\rho,\\
\PP_{2d}^\rho&=&\bigcap_{n\ge2d}\P_{n,2d}^\rho.
\end{eqnarray}

Note that $\SS_{2d}^\rho$ and $\PP_{2d}^\rho$ can be seen as the limit sets of sums of squares and nonnegative forms with respect to the transition maps $\rho_{m,n}:H_{m,d}^S\to H_{n,d}^S$ that send power means to power means, i.e., $\rho_{m,n}(p_\l^{(m)})=p_\l^{(n)}$ for $m<n$. Both cones are closed, convex and contain no lines since they are intersections of closed convex cones that contain no lines. Interestingly, $\varphi$ and $\rho$ lead to the same cones in the limit, i.e., $\SS_{2d}^\varphi=\SS_{2d}^\rho$ and $\PP_{2d}^\varphi=\PP_{2d}^\rho$, as was shown in \cite{blekherman2020symmetric} (see also Proposition \ref{phi=rho} below). And so we may remove the superindices and simply call these cones $\SS_{2d}$ and $\PP_{2d}$. This follows from the fact that monomial means \emph{behave} like power means when the number of variables goes to infinity (see the \emph{gluing lemma}, Lemma \ref{gluing}, below).

The gluing lemmas below, together with partial symmetrization, lead to a \emph{combinatorial} description of the limit cone of sums of squares $\SS_{2d}$, or equivalently of its dual cone $\SS_{2d}^*$.

\begin{teo*}[Theorem \ref{TeoPartialSym}]
\begin{align*}
\SS_{2d}^*=\{\z\in\R^{\pi(2d)}:\M_{2d}\text{ is positive semidefinite} \},
\end{align*}
where $\M_{2d}$ is a matrix whose entries are indexed by pairs $((\a,\l),(\a',\l'))$ with $\a,\a'\in\N^d$, $\l,\l'$ partitions of size at most $d$, such that $|\a|+|\l|=|\a'|+|\l'|=d$, and the $((\a,\l),(\a',\l'))$ entry of $\M_{2d}$ is a formal variable $z_{(\a+\a')\l\l'}$ where $(\a+\a')\l\l'$ denotes the partition whose parts are precisely the nonzero coordinates of $\a+\a'$ together with the parts of $\l$ and $\l'$.
\end{teo*}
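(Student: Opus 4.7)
The plan is to apply \emph{partial symmetry reduction} to rewrite symmetric sums of squares using a smaller, well-structured family of generators, and then invoke the standard SOS/Gram-matrix duality. First, I would establish that for $n\ge 2d$, every $f\in\S_{n,2d}^S$ admits a decomposition $f=\sum_i\sym_n(q_i^2)$ where each $q_i\in H_{n,d}$ is \emph{symmetric in all but at most $d$ of its variables} --- say in $x_{d+1},\dots,x_n$. The reason only $d$ distinguished variables are needed is that no monomial of degree $d$ uses more than $d$ variables, so after choosing a suitable set of distinguished variables for each orbit and averaging, nothing is lost. A natural basis of such partially symmetric forms of degree $d$ is indexed by pairs $(\a,\l)$ with $\a\in\N^d$, $\l$ a partition of size at most $d$, and $|\a|+|\l|=d$; the corresponding basis element is (up to a combinatorial normalization) $b_{\a,\l}^{(n)}:=x_1^{\a_1}\cdots x_d^{\a_d}\,m_\l(x_{d+1},\dots,x_n)$.

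With this basis in hand, standard sum-of-squares duality gives: $\z\in(\S_{n,2d}^S)^*$ if and only if the matrix $M^{(n)}_{2d}$ whose $((\a,\l),(\a',\l'))$ entry is $\z\bigl(\sym_n(b_{\a,\l}^{(n)}\,b_{\a',\l'}^{(n)})\bigr)$ is positive semidefinite. To identify the limiting entries, note that the product $b_{\a,\l}^{(n)}\,b_{\a',\l'}^{(n)}$ equals $x_1^{\a_1+\a'_1}\cdots x_d^{\a_d+\a'_d}\,m_\l(x_{d+1},\dots)\,m_{\l'}(x_{d+1},\dots)$. Applying the gluing lemma (Lemma \ref{gluing}), $m_\l\,m_{\l'}$ behaves like $m_{\l\l'}$ (parts concatenated) in the limit, and full symmetrization $\sym_n$ merges the distinguished exponents $\a+\a'$ into the same pool as $\l$ and $\l'$, producing --- up to a combinatorial factor converging to a constant --- the monomial mean indexed by the partition $(\a+\a')\l\l'$. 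Evaluating $\z$ against this limit yields precisely $z_{(\a+\a')\l\l'}$, matching the formal entry of $\M_{2d}$ in the statement.

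Finally, using the description $\SS_{2d}=\overline{\bigcup_n\S^\varphi_{n,2d}}$ from equation~(1), the limit cone $\SS_{2d}^*$ consists of those $\z$ lying in $(\S_{n,2d}^S)^*$ for every sufficiently large $n$. Passing to the limit, PSD-ness of all the $M^{(n)}_{2d}$ translates into PSD-ness of the single formal matrix $\M_{2d}$, proving the theorem. The main obstacle is making partial symmetry reduction rigorous: one must verify that restricting to at most $d$ distinguished variables truly suffices for all symmetric SOS (not just for individual monomial orbits), and that the resulting basis of symmetrized products spans the relevant space without redundancy affecting PSD-ness. A secondary subtlety is bookkeeping the combinatorial normalizations produced by $\sym_n$ and the gluing lemma, so that the limiting matrix entries are exactly the formal variables $z_{(\a+\a')\l\l'}$ rather than scalar multiples thereof; this is where the partial symmetrization framework --- analogous to the partially labeled graph setup of \cite{blekherman2020simple} --- pays off, since it keeps the indexing clean and makes the limit transparent.
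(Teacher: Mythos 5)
Your route is essentially the paper's (partial symmetry reduction, then the gluing lemma, then spectrahedral duality), but the step you yourself flag as ``the main obstacle'' is exactly where the content of the theorem lies, and your justification for it does not work. The claim that every $f\in\S_{n,2d}^S$ can be written as $\sum_i\sym_n(q_i^2)$ with each $q_i$ symmetric in $x_{d+1},\dots,x_n$ cannot be obtained by ``choosing distinguished variables for each orbit and averaging'': if $f=\sum_j g_j^2$ and you replace each $g_j$ by its average $\tilde g_j$ over the subgroup permuting $x_{d+1},\dots,x_n$, then $\sum_j\sym_n(\tilde g_j^2)$ need not equal $\sum_j\sym_n(g_j^2)=f$, since averaging does not commute with squaring. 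The paper gets this statement from genuine symmetry reduction: an $\mathcal{S}_n$-invariant Gram matrix block-diagonalizes along the isotypic components of $H_{n,d}$ (Theorem \ref{Qrep}, imported from Corollary 4.4 of \cite{blekherman2020symmetric}), the decomposition is stable for $n\ge2d$ with types $(n-|\u|,\u)$, $|\u|\le d$ (Proposition \ref{decomp}), and the explicit Young-symmetrizer construction of symmetry bases (Proposition \ref{copies} and the examples) shows every basis element is a linear combination of terms $\x^\a m_\l(\x_{[k]})$ with $k\le d$ and $|\a|+|\l|=d$. Only then does the span-enlargement observation of Section \ref{ssectionPartialSymR} (if $\cS\subseteq\vspan(\cT)$ then $\sum\cS^2\subseteq\sum\cT^2$), together with Lemma \ref{basis2}, yield $\SS_{2d}=\{\langle A,\RR_{2d}\rangle:A\succeq0\}$ (Theorem \ref{Rrep}); none of this follows from ``no monomial of degree $d$ uses more than $d$ variables.''

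There is also a gap in your final passage to the limit. From $\SS_{2d}=\overline{\bigcup_n\S_{n,2d}^\varphi}$ you get $\SS_{2d}^*=\bigcap_n(\S_{n,2d}^\varphi)^*$, and entrywise convergence of the finite-$n$ matrices $M^{(n)}_{2d}(\z)$ to $\M_{2d}(\z)$ gives only one inclusion: if every $M^{(n)}_{2d}(\z)$ is PSD then so is the limit, because the PSD cone is closed. The converse direction, that $\M_{2d}(\z)\succeq0$ forces $\z\in(\S_{n,2d}^\varphi)^*$ for each finite $n$ (equivalently $\langle\z,\ff\rangle\ge0$ for every $\ff$ in the union), does not follow from convergence -- a PSD limit says nothing about the approximating matrices -- and an approximation argument would require controlling the Gram matrices $A_n$ uniformly in $n$. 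The paper sidesteps this entirely by first proving the limit-level description $\SS_{2d}=\{\langle A,\RR_{2d}\rangle:A\succeq0\}$ and then dualizing that single spectrahedral cone, so that $\SS_{2d}^*=\{\z:\z(\RR_{2d})=\M_{2d}(\z)\succeq0\}$ is immediate; this is why the paper's proof of Theorem \ref{TeoPartialSym} is one line. To complete your argument you must either establish the limit-level Gram description (Theorems \ref{Qrep} and \ref{Rrep}) or supply the missing compactness/normalization argument for the reverse inclusion; as written, both the partial-symmetry claim and the reverse inclusion are asserted rather than proved.
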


\begin{ex}
For $d=2$ the complete list of $(\alpha, \lambda)$ pairs is: 
\begin{align*}
(\a,\l)\in\{((0,0),(2)),((0,0),(1,1)),((1,0),(1)),((0,1),(1)),((1,1),\emptyset),((2,0),\emptyset),((0,2),\emptyset)\},
\end{align*}
and hence 
\begin{align*}
\M_4=\begin{bmatrix}
z_{(2,2)} & z_{(2,1,1)} & z_{(2,1,1)} & z_{(2,1,1)} & z_{(2,1,1)} & z_{(2,2)} & z_{2,2}\\
z_{(2,1,1)} & z_{(1,1,1,1)} & z_{(1,1,1,1)} & z_{(1,1,1,1)} & z_{(1,1,1,1)} & z_{(2,1,1)} & z_{(2,1,1)}\\
z_{(2,1,1)} & z_{(1,1,1,1)} & z_{(2,1,1)} & z_{(1,1,1,1)} & z_{(2,1,1)} & z_{(3,1)} & z_{(2,1,1)}\\
z_{(2,1,1)} & z_{(1,1,1,1)} & z_{(1,1,1,1)} & z_{(2,1,1)} & z_{(2,1,1)} & z_{(2,1,1)} & z_{(3,1)}\\
z_{(2,1,1)} & z_{(1,1,1,1)} & z_{(2,1,1)} & z_{(2,1,1)} & z_{(2,2)} & z_{(3,1)} & z_{(3,1)}\\
z_{(2,2)} & z_{(2,1,1)} & z_{(3,1)} & z_{(2,1,1)} & z_{(3,1)} & z_{(4)} & z_{(2,2)}\\
z_{(2,2)} & z_{(2,1,1)} & z_{(2,1,1)} & z_{(3,1)} & z_{(3,1)} & z_{(2,2)} & z_{(4)}
\end{bmatrix}
\end{align*}

Therefore $\SS_4^*=\{(z_{(1,1,1,1)},z_{(2,1,1)},z_{(2,2)},z_{(3,1)},z_{(4)})\in\R^5:\M_4\succeq0\}$. 
\end{ex}

For the limit cone of even symmetric sums of squares $\EEE\SS_{2d}$ (see Section \ref{evensection}) and its dual $\EEE\SS_{2d}^*$ we have an analogous result.

\begin{teo*}[Theorem \ref{ThmEvenPartialSym}]
\begin{align*}
\EEE\SS_{2d}^*=\{\z\in\R^{\pi(d)}:\M_{2d}'\text{ is positive semidefinite} \},
\end{align*}
where $\M_{2d}'$ is a matrix whose entries are indexed by the pairs $((\a,\l),(\a',\l'))$ with $\a,\a'\in\N^d$, $\l,\l'$ even partitions of size at most $d$, $|\a|+|\l|=|\a'|+|\l'|=d$, and its $((\a,\l),(\a',\l'))$ entry is the formal variable $z_{(\a+\a')\l\l'}$ if $\a+\a'$ has all even entries, or $0$ otherwise.
\end{teo*}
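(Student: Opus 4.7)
The plan is to mirror the proof of Theorem~\ref{TeoPartialSym}, inserting the two modifications forced by evenness: a restriction on the partially symmetric building blocks, and a zeroing out of moment matrix entries that would pair with non-even monomial means (on which $z$ is not defined).

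First, I would derive an even analogue of the partial symmetry reduction from Theorem~\ref{TeoPartialSym}. Starting from an even symmetric sum of squares $f=\sum_i g_i^2$, I would average over the $(\mathbb{Z}/2)^n$-action $\epsilon\cdot g(x)=g(\epsilon_1x_1,\ldots,\epsilon_nx_n)$ (which fixes $f$) and decompose each $g_i=\sum_S g_{i,S}$ into the pure-parity components indexed by $S\subseteq[n]$, where $g_{i,S}$ collects the monomials whose exponent vector has parity pattern $\mathbf{1}_S$. Character orthogonality then yields
\begin{align*}
f=\sum_{i,S}g_{i,S}^2,
\end{align*}
so without loss of generality every summand in the sum-of-squares decomposition is the square of a pure-parity polynomial, whose square is individually even.

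Second, combining this with the $S_{n-d}$-symmetrization step of the partial symmetry reduction, I would observe that the parity pattern of a pure-parity $S_{n-d}$-symmetric building block must itself be $S_{n-d}$-invariant, forcing $S\cap\{d+1,\ldots,n\}$ to be either empty or the full set $\{d+1,\ldots,n\}$. The latter case requires every symmetrized variable to carry a positive odd exponent in a polynomial of fixed degree $d$, which is impossible once $n>2d$ and in particular in the limit $n\to\infty$. Hence in the limit every building block has all symmetrized variables appearing with even exponents, so its symmetric part lies in the span of $\{m_\lambda:\lambda\text{ an even partition}\}$. The relevant building blocks are thus exactly the pairs $(\alpha,\lambda)$ described in the statement.

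Finally, I would compute the moment matrix as in the proof of Theorem~\ref{TeoPartialSym}. Writing $g=\sum_{(\alpha,\lambda)}c(\alpha,\lambda)x^\alpha m_\lambda$, expanding $g^2$, partially symmetrizing and passing to the limit via the gluing lemma (Lemma~\ref{gluing}), the condition $\langle z,\operatorname{psym}(g^2)\rangle\geq0$ for all coefficient vectors $c$ is exactly positive semidefiniteness of the bilinear form whose $((\alpha,\lambda),(\alpha',\lambda'))$ entry is $\langle z,\operatorname{psym}(x^{\alpha+\alpha'}m_\lambda m_{\lambda'})\rangle$. When $\alpha+\alpha'$ has all even entries, the partition $(\alpha+\alpha')\lambda\lambda'$ is even and this entry equals $z_{(\alpha+\alpha')\lambda\lambda'}$; otherwise the partition is not even, the corresponding monomial mean lies outside the domain of $z$, and the pairing is zero. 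This is precisely $\M_{2d}'$. The main obstacle is step two, namely reconciling the pure-parity decomposition with the $S_{n-d}$-symmetry and then ruling out the all-odd parity pattern in the limit; the remaining calculation is a direct adaptation of the analogous step in the proof of Theorem~\ref{TeoPartialSym}.
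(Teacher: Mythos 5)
The gap is in your second step. After the partial symmetry reduction the squares are of polynomials $q$ lying in the span of the blocks $\x^\a m_\l(\x_{[k]})$, and such a $q$ is invariant under permutations of the tail variables; but its pure-parity component $q_S$ satisfies $\sigma(q_S)=q_{\sigma(S)}$ for tail permutations $\sigma$, so whenever $S$ meets the tail (exactly the patterns produced by blocks whose $\l$ has an odd part) $q_S$ is neither tail-invariant nor in the span of your building blocks, and your degree argument says nothing about it. If instead you parity-decompose first, the pure-parity pieces are not adapted to the $\mathcal{S}_n$-symmetry reduction behind Theorem \ref{Qrep}. So the assertion that one may take the squares to be pure-parity polynomials spanned by blocks with $\l$ even is precisely what has to be proved, and your argument for it presupposes it. It is moreover false at every finite $n$: for the block $x_1m_1(\x_{[1]})$ one has $\e\bigl(\sym_n\bigl((x_1m_1(\x_{[1]}))^2\bigr)\bigr)=\tfrac1{n-1}m_{2^2}^{(n)}\ne0$, so blocks with odd $\l$ do contribute to $\EEE\S_{n,2d}^S$ for all $n>2d$; their contributions are of order $1/n$ and disappear only in the limit. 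Establishing that vanishing is the actual content of the paper's argument: one applies $\e$ entrywise to the limit matrices $\Q_{\u,2d}$ (equivalently to $\sym_\infty(\vv^\top\vv)$ as in Theorem \ref{TeoPartialSym}) and uses Lemmas \ref{basis} and \ref{basis2} to see that any product involving a block whose $\l$ has an odd part glues to a partition with an odd part and is therefore annihilated by $\e$ in the limit; this one observation simultaneously removes the odd-$\l$ blocks and produces the zero entries of $\M_{2d}'$ (Corollaries \ref{evensos} and \ref{evenPartialSOS}).

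To close the gap you should either run this limit-vanishing argument (after which the parity bookkeeping becomes unnecessary), or carry out honest symmetry reduction for the hyperoctahedral group $(\mathbb{Z}/2)^n\rtimes\mathcal{S}_n$, which is a substantially larger representation-theoretic task in the spirit of \cite{debus2020reflection} and is not sketched in your proposal. A smaller point about your third step: saying the pairing is zero because $\m_{(\a+\a')\l\l'}$ ``lies outside the domain of $z$'' is not a justification; for $g$ spanned by blocks with arbitrary $\a$ the form $\sym_\infty(g^2)$ is not even, and what is actually paired with $z$ is its image under $\e$, the zeros arising because $\e$ kills monomial means of non-even partitions. Once the matrix with $\e$-projected entries is in hand, your duality step is the same as in Theorem \ref{TeoPartialSym}, and your implicit block-diagonal picture (squares taken within a single parity class of $\a$) is equivalent to the stated condition $\M_{2d}'\succeq0$, since the zero pattern of $\M_{2d}'$ is block-diagonal across those parity classes.
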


\begin{ex}
For $d=2$ the complete list of $(\alpha, \lambda)$ pairs is: 
\begin{align*}
(\a,\l)\in\{((0,0),(2)),((1,1),\emptyset),((2,0),\emptyset),((0,2),\emptyset)\},
\end{align*}
and therefore 
\begin{align*}
\M_4'=\begin{bmatrix}
z_{(2,2)} & 0 & z_{(2,2)} & z_{(2,2)}\\
0 & z_{(2,2)} & 0 & 0\\
z_{(2,2)} & 0 & z_{(4)} & z_{(2,2)}\\
z_{(2,2)} & 0 & z_{(2,2)} & z_{(4)}
\end{bmatrix}
\end{align*}

Therefore $\EEE\SS_4^*=\{(z_{(2,2)},z_{(4)})\in\R^2:\M_4'\succeq0\}$.
\end{ex}

\begin{rem}\label{labeling}
The technique of partial symmetrization was first introduced in extremal graph theory, where it was used to analyze the power of sums of squares calculus in the context of proving homomorphism density inequalities in \cite{blekherman2020tropicalization}. The intuitive idea of the connection is as follows: in the case of polynomials we can view $(\alpha,\lambda)$ pairs as specifying a ``partially labelled loop graph" with $d$ vertices and $d$ edges, where each ``edge" is a loop (i.e. it contains only one vertex) in the following way. We start with vertex labelled $1, \dots, d$ and the  entry $\alpha_i$ specifies the number of loops/edges containing vertex $i$ (if the entry is 0, then vertex $i$ is omitted).
We add the number of part of $\lambda$-many unlabelled vertices, the number of edges containing each vertex given by the size of the corresponding part of $\lambda$. 
\end{rem}

\begin{ex} The pair $(\alpha, \lambda)=\{(0,1),(1)\}$ corresponds to the following partially labelled loop graph:
\begin{tikzpicture}[shorten >=1pt,auto,node distance=1cm,
                thick,main node/.style={circle,draw,font=\normalsize}]
                 \node[main node] (1) {2};
  \node[main node] (2) [right of=1] {};
\end{tikzpicture},
 and the pair $(\alpha, \lambda)=\{(1,0),(1)\}$ to the following graph:
\begin{tikzpicture}[shorten >=1pt,auto,node distance=1cm,
                thick,main node/.style={circle,draw,font=\normalsize}]
                 \node[main node] (1) {1};
  \node[main node] (1) [right of=1] {};
\end{tikzpicture}.
\end{ex}

We show that, in the limit, nonnegative symmetric forms of degree $2d$ are not equal to sums of squares for all $2d\geq 6$. The case of equality for $2d=4$ was shown in \cite{blekherman2020symmetric}, and we also provide a different proof of this equality.
\begin{teo}\label{Thm1}
$\SS_{2d}\subsetneq\PP_{2d}$ if and only if $2d\ge6$.
\end{teo}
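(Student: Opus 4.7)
The plan is to dispose of the two implications separately. The containment $\SS_{2d}\subseteq\PP_{2d}$ is tautological, so the remaining task is to (a) establish equality when $2d=4$, and (b) produce strict separation whenever $2d\ge6$.

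For (a), I would invoke the partial symmetrization description of Theorem \ref{TeoPartialSym}: $\SS_4^*$ is precisely the spectrahedron $\{\z:\M_4\succeq0\}$. By biduality of closed convex cones containing no lines, $\SS_4=\PP_4$ is equivalent to $\SS_4^*=\PP_4^*$, and since $\PP_4^*\subseteq\SS_4^*$ always holds, only the reverse inclusion needs checking. Running through the short list of $(\a,\l)$-pairs exhibited in the example and using the explicit $7\times 7$ matrix $\M_4$, one can verify that positive semidefiniteness of $\M_4$ already encodes the full list of classical symmetric moment inequalities in degree four; every symmetric nonnegative quartic then pairs nonnegatively with such a $\z$. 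This gives a streamlined reproof of the main result of \cite{blekherman2020symmetric}.

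For (b), I would pass to tropicalizations of the dual cones. Since $\SS_{2d}\subseteq\PP_{2d}$ we have $\trop(\PP_{2d}^*)\subseteq\trop(\SS_{2d}^*)$, so it suffices to exhibit a tropical point in the right-hand side that lies outside the left. Theorem \ref{TeoPartialSym} furnishes the combinatorial data for $\trop(\SS_{2d}^*)$ through the support patterns of positive semidefinite completions of $\M_{2d}$, while $\trop(\PP_{2d}^*)$ can be described by adapting the methods of \cite{blekherman2022moments}. I would handle the base case $2d=6$ first by selecting a valuation on the partition coordinates that is consistent with the PSD pattern of $\M_6$, hence lies in $\trop(\SS_6^*)$, but violates at least one of the stricter nonnegativity inequalities defining $\trop(\PP_6^*)$. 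For $2d\ge 8$, either an analogous construction carried out in the larger partition index set, or a lifting of the degree-$6$ witness into higher degree that preserves the tropical obstruction, should close out the argument. The same tropical direction, interpreted through the methods of \cite{blekherman2020tropicalization}, would in fact produce an explicit symmetric form in the limit that is nonnegative but not a sum of squares.

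The main obstacle I anticipate is the explicit tropical separation at $2d=6$. The positive semidefiniteness of $\M_6$ imposes many coupled inequalities among entries indexed by partitions of size at most three, and identifying a tropical weighting that simultaneously satisfies all of them yet violates the stronger moment-type inequalities cutting out $\trop(\PP_6^*)$ requires careful combinatorial bookkeeping. Once the degree-$6$ witness is in hand, propagation to higher degrees should be systematic.
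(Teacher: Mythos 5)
Your overall strategy is the same as the paper's (work on the dual side, settle $2d=4$ by showing $\SS_4^*\subseteq\PP_4^*$, separate the cones for $2d\ge6$ by tropicalizing the duals, then propagate), but as written the proposal leaves all three decisive steps unproven. For $2d=4$, the claim that positive semidefiniteness of $\M_4$ ``already encodes the full list of classical symmetric moment inequalities'' is exactly what must be proved, and no mechanism is offered. The paper's Theorem \ref{thmS4} does this by classifying the extreme rays of $\SS_4^*$ via the maximal-kernel criterion (Lemma \ref{xraysLemma}) applied to the block description from Proposition \ref{prop4}, and then verifying each extreme ray is a point evaluation, i.e.\ lies in $\Phi(\HH_4)$; the only nontrivial case (both blocks of rank one) reduces to checking a $3\times3$ Hankel matrix is positive semidefinite, which is done by hand with an AM--GM estimate. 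Something of this nature is indispensable; ``pairs nonnegatively with every nonnegative quartic'' cannot simply be read off the $7\times7$ matrix $\M_4$.

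For $2d=6$ you concede that the tropical witness is the anticipated obstacle, and indeed it is the entire content of the theorem; moreover your plan to tropicalize $\PP_6^*$ in all $\pi(6)$ coordinates runs into a problem you do not address: the machinery that commutes $\trop$ with the monomial map and the conical hull (Lemma \ref{LemmaConvTropTconv}, Corollary \ref{tropPSD}) applies to semialgebraic subsets of the nonnegative orthant, while moment coordinates indexed by odd partitions (such as $z_{3,1^3}$, $z_{3,2,1}$, $z_{5,1}$) are negative on many point evaluations. The paper circumvents this by slicing with the subspace $H$ of limit sextics supported on non-odd partitions, so the relevant monomial map $\Phi_6'$ has image in $\R_{\ge0}^8$; a computation (Algorithm \ref{Algorithm}) then exhibits the facet $z_{1^6}-3z_{4,1^2}+z_{4,2}+z_{6}\ge0$ of $\trop((\PP_6')^*)$, which cannot be a facet of $\trop((\SS_6')^*)$ because, by Theorem \ref{tropSOS}, all facets of the latter come from $2\times2$ principal minors of $\tilde\RR_6$ and hence have the form $z_\u-z_\nu\ge0$ or $z_\u-2z_\omega+z_\nu\ge0$. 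Finally, the passage to all $2d\ge8$ is not a vague ``lifting that preserves the tropical obstruction'': the paper's Proposition \ref{onceNEQalwaysNEQ} multiplies a witness by $(p_1^{(n)})^2$ and uses that $p_1^{(n)}$ is irreducible and indefinite for every $n$, so failure to be a sum of squares persists for infinitely many $n$ and hence in the limit; this single proposition disposes of every higher degree at once, with no further tropical computation needed. You should state and prove these three ingredients (or substitutes for them) for the proposal to become a proof.
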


We also prove the corresponding result for even symmetric forms, where we show that in the limit nonnegative even symmetric forms agree with sums of squares for the case of $2d=8$ (which was asked in \cite{debus2020reflection}), and are strictly larger for $2d\geq 10$.

\begin{teo}\label{Thm2Even}
$\EEE\SS_{2d}\subsetneq\EEE\PP_{2d}$ if and only if $2d\ge10$.
\end{teo}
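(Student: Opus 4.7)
The plan is to split the theorem into the equality direction $\EEE\SS_{2d}=\EEE\PP_{2d}$ for $2d\in\{2,4,6,8\}$ and the strict-inclusion direction $\EEE\SS_{2d}\subsetneq\EEE\PP_{2d}$ for $2d\ge 10$. The two directions use the two main tools developed earlier: partial symmetry reduction for the upper bound on sums of squares, and tropicalization of dual cones for separating nonnegativity from sums of squares.

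For the equality direction, degree $2$ is trivial and degree $4$ is subsumed by the symmetric statement $\SS_4=\PP_4$ (Theorem \ref{Thm1} at $2d=4$), since every even symmetric form is in particular symmetric. For degrees $6$ and $8$, I would apply Theorem \ref{ThmEvenPartialSym}: by cone duality, $\EEE\SS_{2d}=\EEE\PP_{2d}$ is equivalent to showing that the Gram matrix $\M_{2d}'$ is PSD at every $\z\in\EEE\PP_{2d}^*$. Because $\EEE\PP_{2d}^*$ is generated by (limits of) normalized point evaluations at nonnegative real vectors, it suffices to verify $\M_{2d}'\succeq 0$ at such evaluations. For $2d=6$ this reduces to a positivity check that factors through even nonnegative ternary sextics, which are classically sums of squares. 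For $2d=8$, the analogous reduction hits the exceptional case $n=3$, $2d=8$ of Harris \cite{harris1999real,goel2017analogue}: even nonnegative ternary octics are sums of squares, and this is exactly what is needed to close the argument.

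For strict inclusion at $2d\ge 10$, the plan is to tropicalize both dual cones, parallel to the proof of Theorem \ref{Thm1}. Applying $\Log$ to the point-evaluation generators of $\EEE\PP_{2d}^*$ presents $\trop(\EEE\PP_{2d}^*)$ as an explicit polyhedral cone whose rays are parametrized by real log-coordinate vectors $(v_1,\dots,v_k)$ and whose facet inequalities come from the tropical expressions $\max_i \l_i v_i$ attached to each even partition $\l\vdash 2d$. Tropicalizing the Gram description of $\EEE\SS_{2d}^*$ from Theorem \ref{ThmEvenPartialSym} yields a smaller polyhedral cone cut out by the tropicalized $2\times 2$ principal minor conditions on $\M_{2d}'$. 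At $2d=10$ I would exhibit an explicit tropical ray in $\trop(\EEE\PP_{10}^*)\setminus\trop(\EEE\SS_{10}^*)$ and then lift it to an explicit even symmetric form in $\EEE\PP_{10}\setminus\EEE\SS_{10}$. For higher $2d$ one propagates the strict separation by multiplying a degree-$10$ witness by $p_2^{d-5}$, which is itself an even power-mean SOS; the fact that this multiplication cannot turn a non-SOS limit form into an SOS one has to be verified on the level of the tropical cones rather than of sum-of-squares decompositions.

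The main obstacle will be the degree-$8$ equality. The partial-symmetry framework does not directly prove $\M_8'\succeq 0$ on all of $\EEE\PP_8^*$: one first has to argue that every extreme ray of $\EEE\PP_8^*$ is (a limit of) a point evaluation that concentrates on at most three variables, so that Harris's ternary octic identity becomes applicable, and then match this concentration with the block structure of $\M_8'$. This is the analogue of the ``no extra exceptional case'' obstruction that forces $2d=6$ to be the first failure in Theorem \ref{Thm1}, and it is the technical heart of the proof. A secondary subtlety is justifying the $p_2^{d-5}$ propagation step cleanly, since in general multiplication by an SOS form can convert non-SOS forms into SOS ones; here the argument should go through because tropicalization interacts additively with monomial multiplication.
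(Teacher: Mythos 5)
Your overall architecture (equality for $2d\le 8$, tropical separation of the dual cones for $2d\ge 10$, then propagation) matches the paper, but two of your key steps run in the wrong direction. For the equality cases $2d=6,8$: by Theorem \ref{ThmEvenPartialSym} the condition ``$\M_{2d}'(\z)\succeq0$'' \emph{defines} $\EEE\SS_{2d}^*$, and every point evaluation satisfies it automatically (it is a limit of averages of rank-one Gram matrices), so verifying $\M_{2d}'\succeq0$ on $\EEE\PP_{2d}^*$ only reproves the trivial inclusion $\EEE\PP_{2d}^*\subseteq\EEE\SS_{2d}^*$. What equality actually requires is the converse: every $\z$ with $\M_{2d}'(\z)\succeq0$ must be shown to lie in the moment cone $\cone(\Phi(\HH_{2d}'))$. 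The paper does this by computing the extreme rays of the spectrahedral cone $\EEE\SS_{2d}^*$ via Lemma \ref{xraysLemma} (Proposition \ref{Prop:extraysE6} for $2d=6$, the appendix computation for $2d=8$) and checking each ray is a point evaluation using Proposition \ref{PropStieltjes}. Your proposed reduction to ternary forms is also unsupported: no argument is given that extreme rays of $\EEE\PP_{2d}^*$ ``concentrate on at most three variables,'' the statement that even nonnegative ternary sextics are sums of squares is false (the Motzkin form is an even ternary sextic), and Harris's theorem plays no role in the paper's proof of $\EEE\SS_8=\EEE\PP_8$.

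For the strictness direction, note first that $\EEE\PP_{2d}^*\subseteq\EEE\SS_{2d}^*$, so the set $\trop(\EEE\PP_{10}^*)\setminus\trop(\EEE\SS_{10}^*)$ you propose to exhibit a ray in is empty; the correct witness is a facet inequality of $\trop(\EEE\PP_{10}^*)$ that is violated on $\trop(\EEE\SS_{10}^*)$, i.e.\ a point of the larger tropical cone outside the smaller one, as in Proposition \ref{Prop5678}. The more serious gap is the propagation by $p_2^{d-5}$: multiplying by a positive (indeed SOS) form can convert a non-SOS nonnegative form into a sum of squares, and the appeal to tropicalization does not repair this --- multiplication by $p_2$ is not a monomial operation on the pseudomoment coordinates, and the degree-$10$ tropical separation does not transfer to degree $12$ by such an argument. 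The paper instead multiplies by $g_n^2$ where $g_n=ap_4+b\,p_2^2$ is even symmetric, irreducible and indefinite with coefficients independent of $n$ (Lemma \ref{LemmaIrreducible}); since $fq^2$ being SOS forces $f$ to be SOS when $q$ is irreducible indefinite, Proposition \ref{PropEven} propagates strictness with a degree jump of $8$, which is exactly why the paper needs the four base cases $2d=10,12,14,16$ established tropically in Proposition \ref{Prop5678}. Your plan, with only the degree-$10$ base case and $p_2$-propagation, therefore does not close, and the degree-$8$ equality you identify as the ``technical heart'' is left unproved.
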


We use the technique of \emph{tropicalizaton} to distinguish between the limit cones. While this is a well-established technique in algebraic geometry, its application to the study of the relationship between nonnegative polynomials and sums of squares is quite new, and we follow some ideas of \cite{blekherman2022moments}.


\subsection*{Acknowledgements}
We thank Sebastian Debus and Cordian Riener for many fruitful conversations and helping us figure out explicit constructions of limit nonnegative forms that are not sums of squares. We thank Josephine Yu for helpful discussions regarding tropicalization and providing us with Algorithm \ref{Algorithm}.

\section{Normalized symmetric forms in any number of variables}\label{SectionSSOS}



A \emph{partition} $\l=(\l_1,\dots,\l_k)$ of $d$, denoted $\l\vdash d$, is a non-increasing sequence of positive integers $\l_1\ge\dots\ge\l_k$ that add up to $d$, we call these integers the \emph{parts} of $\l$. The \emph{length} of $\l$, denoted $\ell(\l)$, is its number of parts. The \emph{size} of $\l$, denoted $|\l|$, is the sum of its parts. We denote by $\Lambda$ the set of partitions of all natural numbers (the empty set $\emptyset$ the only partition of $0$ by definition, and so $|\emptyset|=\ell(\emptyset)=0$). We will condense the number of repeated parts of a partition in a exponent, for example $(2^4,1^2):=(2,2,2,2,1,1)$.

A \emph{monomial mean} is the symmetrization of a monomial, i.e., the arithmetic mean of the terms in a monomial symmetric polynomial. For all $n>m\ge d$ the maps $\varphi_{m,n}$ take the basis of monomial means in $H_{m,d}^S$ to the basis of monomial means in $H_{n,d}^S$. If $M_\l^{(n)}$ is the monomial symmetric polynomial in $n$-variables corresponding to the partition $\l$ then we call $m_\l^{(n)}$ its arithmetic mean. Namely, $m_\l^{(n)}:=M_\l^{(n)}/M_\l^{(n)}(\1)$ where $\1$ is the all-ones vector. Sometimes we write $m_\l(\y)$ for the monomial mean corresponding to the partition $\l$ in the set of variables $\y$, for example $m_{(2,1)}(x_2,x_3,x_4)=\frac16(x_2^2x_3+x_2x_3^2+x_2^2x_4+x_2x_4^2
+x_3^2x_4+x_3x_4^2)$. We will often remove the parentheses in the subindex of $m_\l$, for example $m_{2,1}:=m_{(2,1)}$. Notice that $M_\l$ and $m_\l$ are only defined when the number of variables is at least the length of $\l$, so we assume that $M_\l^{(n)}=m_\l^{(n)}=1$ whenever $\ell(\l)>n$.

For $\l,\u\in\Lambda$ define $\l\u$ to be the partition whose parts are precisely the parts of $\l$ and $\u$ put together, we say that we \emph{glue} $\l$ and $\u$. Note $\ell(\l\u)=\ell(\l)+\ell(\u)$ and $|\l\u|=|\l|+|\u|$. We will also label the coordinates of $\R^{\pi(d)}$ according to the lexicographic order on partitions, e.g., given $\mathbf{c}\in\R^{\pi(3)}$ then $\mathbf{c}:=(c_{(1^3)},c_{(2,1)},c_{(3)})$.

For a symmetric form $f$ and $\v\vdash\deg(f)$ define $[f]_\v$ to be the coefficient of $m_\v$ in $f$ when expressed in the monomial mean basis.

\begin{lemma}\label{gluing}
For all $\l,\u,\v\in\Lambda$
$$\lim_{n\to\infty}[m_\l^{(n)}m_\u^{(n)}]_\v=\begin{cases}
0, & \text{if}\,\,\,\nu\ne\l\u,\\
1, & \text{if}\,\,\,\nu=\l\u.
\end{cases}$$
\end{lemma}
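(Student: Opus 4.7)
The plan is to compute $[m_\l^{(n)} m_\u^{(n)}]_\v$ exactly for each $n$ and then let $n\to\infty$. First I would write $m_\l^{(n)} = M_\l^{(n)}/M_\l^{(n)}(\1)$ and use the explicit count $M_\l^{(n)}(\1) = n!/((n-\ell(\l))!\prod_k m_k(\l)!)$, where $m_k(\l)$ denotes the number of parts of $\l$ equal to $k$, so that $M_\l^{(n)}(\1) \sim n^{\ell(\l)}/\prod_k m_k(\l)!$ as $n\to\infty$. Next, I would expand $M_\l^{(n)} M_\u^{(n)} = \sum_\v c(\v;\l,\u)\,M_\v^{(n)}$. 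By $\mathcal{S}_n$-symmetry each monomial of $M_\v^{(n)}$ appears with the same coefficient $c(\v;\l,\u)$, which equals the number of pairs $(\b,\g)\in\N^n \times \N^n$ satisfying $\b+\g = \a_0$ for any fixed representative exponent vector $\a_0$ of a monomial in $M_\v^{(n)}$, where the multiset of nonzero entries of $\b$ equals $\l$ and that of $\g$ equals $\u$. Crucially, any such $\b,\g$ is forced to be supported inside $\supp(\a_0)$, so this count depends only on $\l,\u,\v$ and not on $n$.

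Combining the last two observations, $[m_\l^{(n)} m_\u^{(n)}]_\v = c(\v;\l,\u)\cdot M_\v^{(n)}(\1)/(M_\l^{(n)}(\1) M_\u^{(n)}(\1))$, which has leading order $n^{\ell(\v)-\ell(\l)-\ell(\u)}$. The key support inequality is $\supp(\b+\g)\subseteq \supp(\b)\cup\supp(\g)$, yielding $\ell(\v)\le\ell(\l)+\ell(\u)$ with equality if and only if $\supp(\b)\cap \supp(\g) = \emptyset$; in that case the nonzero entries of $\b+\g$ are exactly the parts of $\l$ together with the parts of $\u$, forcing $\v=\l\u$. Consequently, whenever $\v\ne\l\u$ one has $\ell(\v)<\ell(\l)+\ell(\u)$, and the coefficient tends to $0$.

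Finally, for $\v=\l\u$ I would count $c(\l\u;\l,\u) = \prod_k \binom{m_k(\l)+m_k(\u)}{m_k(\l)}$: the $m_k(\l\u) = m_k(\l)+m_k(\u)$ positions where $\a_0$ takes value $k$ must be partitioned into the $m_k(\l)$ positions inherited from $\b$ and the $m_k(\u)$ positions inherited from $\g$. Using $\ell(\l\u)=\ell(\l)+\ell(\u)$, the asymptotic ratio $M_{\l\u}^{(n)}(\1)/(M_\l^{(n)}(\1) M_\u^{(n)}(\1))$ is exactly the reciprocal of this product, so the two factors multiply to a limit of $1$.

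The only subtle point is justifying that the combinatorial coefficient $c(\v;\l,\u)$ is independent of $n$ and that the count is the same for every representative monomial of $M_\v^{(n)}$; once these are noted, everything else reduces to matching factorial powers of $n$ and multinomial coefficients.
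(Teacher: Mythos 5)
Your proposal is correct, and its treatment of the terms $\v\ne\l\u$ runs along essentially the same lines as the paper's: write $c_\v^{(n)}=d_\v\,M_\v^{(n)}(\1)/\bigl(M_\l^{(n)}(\1)M_\u^{(n)}(\1)\bigr)$, observe that the coefficient $d_\v$ of a fixed monomial in $M_\l^{(n)}M_\u^{(n)}$ does not depend on $n$ (its decompositions are supported in the support of that monomial), and use $M_\rho^{(n)}(\1)\sim n^{\ell(\rho)}/\prod_k m_k(\rho)!$ together with $\ell(\v)<\ell(\l)+\ell(\u)$ to get decay of order $1/n$. Where you genuinely diverge is the case $\v=\l\u$: the paper never computes that coefficient at all, but instead notes that the $c_\v^{(n)}$ are nonnegative and sum to $1$ (evaluate at $\1$), so the vanishing of all other coefficients forces the $\l\u$-coefficient to tend to $1$. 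You instead evaluate it exactly, counting $c(\l\u;\l,\u)=\prod_k\binom{m_k(\l)+m_k(\u)}{m_k(\l)}$ and cancelling this against the asymptotics of $M_{\l\u}^{(n)}(\1)/\bigl(M_\l^{(n)}(\1)M_\u^{(n)}(\1)\bigr)$; the count is valid because, as you note, $\ell(\l\u)=\ell(\l)+\ell(\u)$ rules out decompositions with overlapping supports. The paper's normalization trick is shorter and sidesteps the multinomial bookkeeping; your computation is a bit heavier but yields more: an exact expression for the coefficient at every finite $n$ (up to the ratio of $M(\1)$'s) and hence an explicit $O(1/n)$ convergence rate, with no appeal to nonnegativity of the coefficients.
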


\begin{ex}
\begin{align*}
m_{1^2}^{(n)}m_{1}^{(n)}&=\frac{\sum_{i<j} x_ix_j}{\binom{n}2}\cdot\frac{\sum x_k}{n}=\frac1{n\binom{n}2}\left(\sum_{i\ne j} x_i^2x_j+3\sum_{i<j<k} x_ix_jx_k\right)\\
&=\frac2n\cdot\frac{\sum_{i\ne j} x_i^2x_j}{2\binom{n}2}+\frac{n-2}n\cdot\frac{\sum_{i<j<k} x_ix_jx_k}{\binom{n}3}\\
&=\frac2{n}m_{2,1}^{(n)}+\frac{n-2}{n}m_{1^3}^{(n)},\qquad\frac2n\to0\quad\text{and}\quad\frac{n-2}n\to1.
\end{align*}{\flushright$\triangle$}
\end{ex}

Proof of Lemma \ref{gluing}
\begin{proof}
Let $m_\l^{(n)}m_\u^{(n)}=\sum_{\v\vdash r} c_\v^{(n)}m_\v^{(n)}$, where $c_\v^{(n)}\in\R$ and $r=|\l|+|\u|$. Clearly, the $c_\v^{(n)}$ are nonnegative. It is enough to prove that $\sum_{\v\ne\l\u}c_\v^{(n)}\to0$ since $\sum_{\v\vdash r}c_\v^{(n)}=1$ (this can be seen by evaluating at $\1$).
For any partition $\rho$ define $t(\rho):=\prod_i a_i!$ where $a_i$ is the number of parts of $\rho$ equal to $i$.
Now suppose $n$ is sufficiently large. The number of monomials in $m_\rho^{(n)}$ is $O(n^{\ell(\rho)})$ because we have $\binom{n}{\ell(\rho)}\sim\frac{n^{\ell(\rho)}}{\ell(\rho)!}$ ways of choosing the variables and $\frac{\ell(\rho)!}{t(\rho)}$ ways of assigning exponents to those variables, so the number of monomials is $\binom{n}{\ell(\rho)}\frac{\ell(\rho)!}{t(\rho)}\sim\frac{n^{\ell(\rho)}}{t(\rho)}=O(n^{\ell(\rho)})$.
The number of monomials in $m_\l^{(n)}m_\u^{(n)}$ is therefore $O(n^{\ell(\l)})O(n^{\ell(\u)})=O(n^{\ell(\l\u)})$. The number of monomials in $m_\l^{(n)}m_\u^{(n)}$ of shape $\v\ne\l\u$ can be counted in the following way: a monomial $\x^\a$ in $m_\l^{(n)}$ can be chosen in $O(n^{\ell(\l)})$ ways and a monomial $\x^\b$ in $m_\u^{(n)}$ with at least one variable in common with $\x^\a$ (so that $\x^\a \x^\b$ has shape different from $\l\u$) can be chosen in $O(n^{\ell(\u)-1})$ ways: from the total number of monomials in $m_\u^{(n)}$ exclude the ones without variables in common with $\x^\a$:
\begin{align}\label{binom}
\left(\binom{n}{\ell(\u)}-\binom{n-\ell(\l)}{\ell(\u)}\right)\frac{\ell(\u)!}{t(\u)}\sim\frac1{t(\u)}(n^{\ell(\u)}-(n-\ell(\l))^{\ell(\u)})\sim\frac{\ell(\l)\ell(\u)}{t(\u)}n^{\ell(\u)-1}.
\end{align}
So, if $M_\l^{(n)}M_\u^{(n)}=\sum_{\v\vdash r} d_\v^{(n)}M_\v^{(n)}$ then $c_\v^{(n)}=d_\v^{(n)}\frac{M_\v^{(n)}(\1)}{M_\l^{(n)}(\1)M_\u^{(n)}(\1)}=d_\v^{(n)}\frac{M_\v^{(n)}(\1)}{O(n^{\ell(\l\u)})}$. But, for $\v\ne\l\u$ (such that $d_\nu^{(n)}\ne0$), $M_\v^{(n)}(\1)$ is at most $O(n^{\ell(\l\u)-1})$, so $c_\v^{(n)}$ is at most $d_\v^{(n)}O(n^{-1})$. Finally, $d_\v^{(n)}$ is $O(1)$ since the number of ways of obtaining a fixed monomial $\x^\g$ in the product $M_\l^{(n)}M_\u^{(n)}$ does not depend on $n$ ($|\g|=r$ so $\x^\g$ has at most $r$ fixed variables), so $c_\v^{(n)}\to0$ as $n\to\infty$ and therefore $\sum_{\v\ne\l\u}c_\v\to0$ since the number of partitions of $r$ is also $O(1)$.
\end{proof}

\begin{de}
For $\l\vdash d$ and $m<n$ one has $\varphi_{m,n}(m_\l^{(m)})=m_\l^{(n)}$, so one can define $\m_\l\in H_{\infty,d}^\varphi$ as the corresponding direct limit of $m_\l^{(n)}$, i.e., the preimage of $m_\l^{(n)}$ under the natural isomorphism $\varphi_n:H_{\infty,d}^\varphi\to H_{n,d}^S$, which is well-defined since the natural isomorphisms commute with the transition maps, i.e., $\varphi_n=\varphi_{m,n}\circ\varphi_m$ or $\varphi_m^{-1}=\varphi_n^{-1}\circ\varphi_{m,n}$ for all $m<n$ with $m,n\ge d$. 
\end{de}

\begin{de}\label{philimit}
We define the $\varphi$-\emph{limit} (and analogously the $\rho$-limit) of a sequence $$\left\{f_n=\sum_{\l\vdash d}c_\l^{(n)}m_\l^{(n)}\quad\bigg|\quad c_\l^{(n)}\in\R\right\}_{n\ge d}$$ of symmetric forms in increasing number of variables, denoted $\lim_{n\to\infty}^\varphi f_n$, as $\sum_{\l\vdash d}(\lim_{n\to\infty}c_\l^{(n)})\m_\l$. 
\end{de} 

\subsection{Normalized symmetric nonnegative forms and symmetric sums of squares at infinity}\label{subSection2}

By identifying an expression $\f=\sum_{\l\vdash2d} c_\l\m_\l$ with its tuple of coefficients $c\in\R^{\pi(2d)}$, we will abuse notation and say that $\f\in\SS_{2d}$ or $\f\in\PP_{2d}$ whenever $c\in\SS_{2d}$ or $c\in\PP_{2d}$ respectively. Given $\f=\sum_{\l\vdash2d} c_\l\m_\l$ define, for $n\ge 2d$, $\f^{\varphi,(n)}:=\sum_{\l\vdash2d}c_\l m_\l^{(n)}$ and $\f^{\rho,(n)}:=\sum_{\l\vdash2d} c_\l p_\l^{(n)}$.

\begin{prop}\label{onceNEQalwaysNEQ}
If for some $d$ we have $\SS_{2d}\subsetneq\PP_{2d}$ then $\SS_{2k}\subsetneq\PP_{2k}$ for all $k\ge d$.
\end{prop}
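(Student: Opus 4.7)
The plan is to construct, from any witness $\f\in\PP_{2d}\setminus\SS_{2d}$, a degree-$2k$ witness $\tilde{\f}\in\PP_{2k}\setminus\SS_{2k}$, by multiplying by the power mean indexed by the partition $(1^{2(k-d)})$. In $\rho$-coordinates, if $\f$ corresponds to $(c_\l)_{\l\vdash 2d}$, let $\tilde{\f}$ correspond to $c'_\mu := c_\l$ when $\mu=\l\cdot(1^{2(k-d)})$ and $c'_\mu := 0$ otherwise; the map $\l\mapsto\l\cdot(1^{2(k-d)})$ is injective from partitions of $2d$ into partitions of $2k$, so $\tilde{\f}$ is well-defined. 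Since $p_\l^{(n)}p_\mu^{(n)}=p_{\l\mu}^{(n)}$ holds in any number of variables, for every $n$ we have
\[
\tilde{\f}^{\rho,(n)} \;=\; \bigl(p_1^{(n)}\bigr)^{2(k-d)}\cdot\f^{\rho,(n)}.
\]
This expression is visibly nonnegative on $\R^n$, so $\tilde{\f}\in\PP_{2k}$.

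The heart of the argument is $\tilde{\f}\notin\SS_{2k}$, for which I would invoke the following elementary divisibility lemma: if $q\in\R[x_1,\dots,x_n]$ is a nonzero linear form and $q^{2m}g$ is a sum of squares of real polynomials, then $g$ is also a sum of squares. To prove it, write $q^{2m}g=\sum_i h_i^2$ and reduce modulo $q$; since $\R[x_1,\dots,x_n]/(q)$ is isomorphic to a polynomial ring over $\R$ and is therefore formally real, the identity $\sum_i \bar{h}_i^2=0$ forces $\bar{h}_i=0$, i.e., $q\mid h_i$ for every $i$. Writing $h_i=qh_i'$ and cancelling $q^2$ in the integral domain $\R[x_1,\dots,x_n]$ yields $q^{2(m-1)}g=\sum_i(h_i')^2$, and iterating $m$ times exhibits $g$ as an SOS.

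Now assume for contradiction that $\tilde{\f}\in\SS_{2k}$, so $(p_1^{(n)})^{2(k-d)}\f^{\rho,(n)}$ is SOS in $\R[x_1,\dots,x_n]$ for every $n\ge 2k$. Applying the lemma with $q=p_1^{(n)}$ and $m=k-d$ shows $\f^{\rho,(n)}$ is SOS for every $n\ge 2k$. To upgrade this to every $n\ge 2d$, which is what $\f\in\SS_{2d}$ requires, I would use the partial nestedness of Proposition~2.6 of \cite{blekherman2020symmetric}: $\S_{m,2d}^\rho\supseteq\S_{N,2d}^\rho$ whenever $m\mid N$. For any $m\ge 2d$, pick a multiple $N$ of $m$ with $N\ge 2k$; then $\f^{\rho,(N)}\in\S_{N,2d}^\rho$ forces $\f^{\rho,(m)}\in\S_{m,2d}^\rho$, so $\f\in\SS_{2d}$, contradicting the choice of $\f$.

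The main conceptual obstacle lies in the choice of multiplier. A naive substitute such as $(p_2^{(n)})^{k-d}$ would fail, since Scheiderer-type denominator results show that multiplication by the positive-definite form $\sum x_i^2$ can convert a nonnegative non-SOS form into an SOS one. The multiplier $(p_1^{(n)})^{2(k-d)}$ avoids this obstruction precisely because it is the square of a linear form, and the divisibility lemma above lets us cancel such a squared linear factor out of any SOS decomposition while preserving the SOS property.
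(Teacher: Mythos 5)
Your proof is correct, and at heart it is the same argument as the paper's: both produce the higher-degree witness by multiplying $\f^{\rho,(n)}$ by an even power of the linear power mean $p_1^{(n)}$, which visibly preserves nonnegativity, and both rest on the fact that an SOS divisible by $(p_1^{(n)})^2$ remains an SOS after cancelling that factor. The differences are in execution. The paper passes from degree $2d$ to $2d+2$ one factor of $(p_1^{(n)})^2$ at a time and justifies the key step by noting that $p_1^{(n)}$ is irreducible and indefinite (invoking the standard multiplier fact), tracking an infinite set $I$ of $n$ for which $\f^{\rho,(n)}$ fails to be SOS so that some such $n$ survives at the next degree. You instead do the multiplication in one shot with $(p_1^{(n)})^{2(k-d)}$, prove the cancellation lemma from scratch for linear forms (reduction modulo $p_1$ and formal reality of the quotient polynomial ring --- a clean, self-contained argument available precisely because the multiplier is linear), and you make explicit the bookkeeping the paper leaves implicit: non-membership in $\SS_{2k}$ only gives information for $n\ge 2k$, and you recover SOS-ness for all $n\ge 2d$ via the divisibility nestedness $\S_{m,2d}^\rho\supseteq\S_{N,2d}^\rho$ for $m\mid N$ (Proposition 2.6 of \cite{blekherman2020symmetric}), which is also the fact underlying the paper's infinite set $I$. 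Your version is a bit longer but more self-contained and more careful about which $n$ are quantified; the paper's is shorter by citing the irreducible-indefinite multiplier fact and iterating in degree. Your closing remark about why $(p_2^{(n)})^{k-d}$ would not work is a sensible sanity check and consistent with the paper's choice of multiplier.
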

\begin{proof}
Suppose $\f=\sum_{\l\vdash2d}c_\l \m_\l\in\PP_{2d}\setminus\SS_{2d}$. So for all $n\ge2d$ we have $\f^{\rho,(n)}\in \P_{n,2d}^S$ but $\f^{\rho,(n)}\notin\Sigma_{n,2d}^S$ for all $n$ in some infinite set $I$. Then for all $n\ge2d$ we have $\f^{\rho,(n)}(p_1^{(n)})^2\in \P_{n,2d+2}^S$ but, since $p_1^{(n)}$ is irreducible indefinite for all $n$, $\f^{\rho,(n)}(p_1^{(n)})^2\notin\Sigma_{n,2d+2}^S$ for all $n\in I$, so $\sum_{\l\vdash2d}c_\l\m_{\l 1^2}\in\PP_{2d+2}\setminus\SS_{2d+2}$.
\end{proof}

In Section 6 we will prove that $\SS_6\subsetneq\PP_6$ (and $\EEE\SS_{2d}\subsetneq\EEE\PP_{2d}$ for $2d=10,12,14,16$), and so Proposition \ref{onceNEQalwaysNEQ} will immediately imply Theorem \ref{Thm1}. Theorem \ref{Thm2Even} will follow from Proposition \ref{Prop5678} and Proposition \ref{PropEven} below, but first we show that there exist even symmetric irreducible indefinite forms of degree 4 with fixed coefficients regardless of the number of variables.

\begin{lemma}\label{LemmaIrreducible}
There exist $a,b\in\R$ such that for all $n\ge3$ the even symmetric form $g_n:=ap_4^{(n)}+b(p_2^{(n)})^2$ is irreducible indefinite.
\end{lemma}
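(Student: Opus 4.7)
The approach is to make an explicit choice, say $a = 1$ and $b = -2$, and verify that $g_n = p_4^{(n)} - 2(p_2^{(n)})^2$ is indefinite and irreducible for every $n \geq 3$. Indefiniteness is immediate from two evaluations: $g_n(1,0,\ldots,0) = (n-2)/n^2 > 0$ for $n \geq 3$ and $g_n(1,\ldots,1) = a + b = -1 < 0$.

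For irreducibility we exploit that $g_n$ is a polynomial in $y_i := x_i^2$, hence invariant under the sign-change action of $\mathbb{Z}_2^n$ on $\R[x_1,\ldots,x_n]$. Suppose $g_n = f_1 f_2$ with both $f_i$ non-constant quadratic forms; the $\mathbb{Z}_2^n$ action permutes the pair $\{f_1, f_2\}$ as scalar-free forms, giving two cases. In \emph{Case A} each $f_i$ is fixed up to scalar, so both transform by a common character $\chi_S$ of $\mathbb{Z}_2^n$. If $S = \emptyset$, both $f_i$ are linear in $y$, so $h_n(y) := \frac{1}{n}\sum y_i^2 - \frac{2}{n^2}(\sum y_i)^2$ factors in $\R[y]$; but the matrix $\frac{1}{n}I - \frac{2}{n^2}J$ has full rank $n$ (eigenvalues $1/n$ with multiplicity $n-1$ and $-1/n$ with multiplicity $1$), so $h_n$ is irreducible over $\R$ for $n \geq 3$. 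If $|S| = 2$, each $f_i$ must be a scalar multiple of the unique degree-two monomial $x_k x_l$ with parity $S$, forcing $g_n \propto x_k^2 x_l^2$, which is false.

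In \emph{Case B} some $\epsilon_0 \in \mathbb{Z}_2^n$ swaps $f_1, f_2$ up to scalar. The stabilizer of $f_1$ is $\ker \chi_T$ for some nonempty $T \subseteq [n]$, and one obtains $f_1 = u + v$ with $u, v$ supported on monomials of parity $S$ and $S \triangle T$ respectively, and $g_n = c(u^2 - v^2)$. Since degree-two monomial parities lie in $\{\emptyset\} \cup \binom{[n]}{2}$, a case analysis on $(S, S \triangle T)$ shows that $\{S, S \triangle T\} \subseteq \binom{[n]}{2}$ would force $g_n$ to be divisible by some $x_a^2$ or supported on at most two monomials, contradicting its shape. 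In the remaining sub-case $S = \emptyset$, $T = \{k, l\}$, one has $u = \sum c_i x_i^2$ and $v = d\,x_k x_l$; matching $x_i^4$ coefficients and $x_i^2 x_j^2$ coefficients for $\{i, j\} \neq T$ gives $c c_i^2 = (an+b)/n^2$ and $c c_i c_j = b/n^2$. Squaring and comparing yields the necessary condition $an + 2b = 0$, which for $(a, b) = (1, -2)$ holds only at $n = 4$; and at $n = 4$ the resulting sign constraints $\epsilon_i \epsilon_j = -1$ for every pair $\{i, j\} \neq T$ force $\epsilon_3 = \epsilon_4 = -\epsilon_1$ while simultaneously demanding $\epsilon_3 \epsilon_4 = -1$, a contradiction.

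The main obstacle is the bookkeeping in Case B, in particular identifying all allowed $(S, S\triangle T)$ shapes and ruling them out uniformly in $n$; once the $\mathbb{Z}_2^n$-equivariant decomposition is set up, the remaining sub-case and sign checks are routine. An alternative, cleaner route for $n \geq 4$ is a singular-locus argument: a direct computation of $\nabla g_n = 0$ shows the singular points of $V(g_n) \subset \mathbb{P}^{n-1}_{\mathbb{C}}$ are finite (nonzero coordinates must share a common square), whereas reducibility would force $\operatorname{Sing}(V(g_n)) \supseteq V(f_1) \cap V(f_2)$ to have dimension $\geq n - 3 \geq 1$, reducing the intricate analysis to the single remaining case $n = 3$.
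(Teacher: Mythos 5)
Your route is genuinely different from the paper's. The paper never exhibits $(a,b)$: for each fixed $n$ it observes that the reducible forms are a Zariski-closed subset of the two-dimensional space of even symmetric quartics, invokes Goel--Kuhlmann--Reznick for the existence of one irreducible even symmetric quartic in each number of variables $n\ge3$, and then argues that removing countably many proper hypersurfaces from the $(a,b)$-plane cannot swallow the nonempty Euclidean-open set of indefinite choices. You replace this soft genericity argument by the explicit choice $(a,b)=(1,-2)$ and a direct irreducibility analysis; that buys an explicit $g_n$ and independence from the cited existence result, at the cost of having to control all factorizations uniformly in $n$. Your indefiniteness evaluations, the rank computation in Case A, the Case B bookkeeping leading to $an+2b=0$ (hence $n=4$), the sign contradiction at $n=4$, and the singular-locus computation are all correct as far as they go.

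The genuine gap is coverage of the factorization types. You posit $g_n=f_1f_2$ with both factors quadratic and assert that $\mathbb{Z}_2^n$ permutes the pair $\{f_1,f_2\}$ up to scalars; that permutation action comes from uniqueness of factorization into \emph{irreducibles}, so the dichotomy Case A/Case B is only justified when $f_1,f_2$ are irreducible, and you never exclude factorizations involving linear factors (a real linear form times an irreducible cubic, or quadratic factors that themselves split, in which case a sign change may send $f_1$ to a product of one linear factor of $f_1$ with one of $f_2$). This possibility is not vacuous in your family: for $n=3$ the quartic $\sum x_i^4-\tfrac12(\sum x_i^2)^2$ is Heron's form and splits into four linear forms, so linear factors must be ruled out, and the omission bites exactly where you rely on the character argument, namely $n=3$, since your singular-locus route is only claimed for $n\ge4$. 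Two short repairs are available. (i) A linear prime factor $\ell$ would have its entire $\mathbb{Z}_2^n$-orbit dividing $g_n$: support size $1$ contradicts the nonzero $x_j^4$ coefficients; support size $2$ forces an invariant cofactor $\sum_k c_kx_k^2$ and kills the $x_m^4$ coefficient for $m$ outside the support (such $m$ exists for $n\ge3$, and $(an+b)/n^2=(n-2)/n^2\neq0$); support size $3$ forces $n=3$ and $g_3$ proportional to $\prod(\alpha x_1\pm\beta x_2\pm\gamma x_3)$, whose coefficient ratios ($-2$ for $x_i^2x_j^2$ against $x_i^4$ after normalizing) are incompatible with $g_3\propto\sum x_i^4-4\sum_{i<j}x_i^2x_j^2$. (ii) Cleaner: your own gradient computation shows the singular locus of $V(g_n)\subset\mathbb{P}^{n-1}_{\mathbb{C}}$ is \emph{empty} for odd $n$ (the conditions $x_i^2=\tfrac2n\sum_j x_j^2$ on the nonzero coordinates force the number of nonzero coordinates to be $n/2$), and in $\mathbb{P}^2$ any two curves meet, so a smooth plane quartic is irreducible; this closes $n=3$ and in fact makes the singular-locus argument alone a complete proof for all $n\ge3$, with no character analysis needed.
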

\begin{proof}
In the space of $n$-variate forms of degree $d$ the coefficients of a reducible polynomial satisfy the equations coming from one of finitely many possible factorizations. Hence the set of reducible polynomials in this space is Zariski closed. Therefore, when restricting to a subspace one only needs to see that there is at least one irreducible form there to prove that the set of irreducible forms is Zariski open in that subspace. Since there is an irreducible even symmetric form of degree four \cite{goel2017analogue} for each $n\ge3$, the set of $n$-variate even symmetric irreducible forms of degree four is Zariski open in that subspace, so the intersection of these sets for each $n\ge3$ is Zariski open too because we are excluding only countably many hypersurfaces. Finally, the set of indefinite forms is Euclidean open in the subspace of even symmetric forms of degree 4 so it has non-empty intersection with the Zariski open set above.


\end{proof}

\begin{prop}\label{PropEven}
If $\EEE\SS_{2k}\subsetneq\EEE\PP_{2k}$ for $k=d,d+1,d+2,d+3$ then $\EEE\SS_{2k}\subsetneq\EEE\PP_{2k}$ for all $k\ge d$.
\end{prop}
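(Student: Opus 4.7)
The plan is to mimic the proof of Proposition \ref{onceNEQalwaysNEQ}, replacing the non-even form $(p_1^{(n)})^2$ by the square of an even-symmetric irreducible indefinite form. The right replacement is produced by Lemma \ref{LemmaIrreducible}: fix $a,b\in\R$ such that $g_n:=ap_4^{(n)}+b(p_2^{(n)})^2$ is irreducible and indefinite for every $n\ge 3$, with coefficients independent of $n$. Since $g_n$ is an even symmetric form of degree $4$, multiplication by $g_n^2$ raises degree by $8$, i.e., sends level $2k$ to level $2k+8$. This is why four consecutive starting levels $k=d,d+1,d+2,d+3$ are needed: iterating $k\mapsto k+4$ from each covers one residue class modulo $4$, and together they cover all $k\ge d$.

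The main step to establish is: if $\EEE\SS_{2k}\subsetneq\EEE\PP_{2k}$ then $\EEE\SS_{2k+8}\subsetneq\EEE\PP_{2k+8}$. Given $\f=\sum_{\l\vdash 2k}c_\l\m_\l\in\EEE\PP_{2k}\setminus\EEE\SS_{2k}$, the finite-variable evaluations satisfy $\f^{\rho,(n)}\in\P_{n,2k}^S$ for all $n\ge 2k$ and $\f^{\rho,(n)}\notin\Sigma_{n,2k}^S$ for $n$ in some infinite set $I$. Then $\f^{\rho,(n)}g_n^2\in\P_{n,2k+8}^S$ for every $n$ as a product of nonnegative forms. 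For $n\in I$, the same cancellation argument used in Proposition \ref{onceNEQalwaysNEQ} applies: if $\f^{\rho,(n)}g_n^2=\sum f_i^2$, then since $g_n$ is irreducible indefinite over $\R$, each $f_i$ vanishes on the smooth real hypersurface $V_\R(g_n)$ and so is divisible by $g_n$, which after cancelling $g_n^2$ would force $\f^{\rho,(n)}\in\Sigma_{n,2k}^S$, a contradiction. Hence $\f^{\rho,(n)}g_n^2\notin\Sigma_{n,2k+8}^S$ for all $n\in I$.

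Because $a,b$ do not depend on $n$ and because power means multiply according to $p_\l p_\u=p_{\l\u}$, the coefficients of $\f^{\rho,(n)}g_n^2$ in the power-mean basis are stable in $n$, so $\f\cdot(ap_4+bp_2^2)^2$ is a well-defined element of the $\rho$-limit lying in $\EEE\PP_{2k+8}\setminus\EEE\SS_{2k+8}$. Iterating the resulting $k\mapsto k+4$ construction from the four hypothesized counterexamples at $k=d,d+1,d+2,d+3$ produces a counterexample at every $k\ge d$, proving the proposition.

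The main potential obstacle is the ``cancel an irreducible indefinite square'' step, which requires that $g_n$ be $\R$-irreducible and have a smooth real zero (equivalently, that the smooth part of $V_\R(g_n)$ be Zariski-dense in $V_\C(g_n)$); this is exactly what Lemma \ref{LemmaIrreducible} delivers, and it is the same black-box fact tacitly used in the proof of Proposition \ref{onceNEQalwaysNEQ}, so nothing new is required.
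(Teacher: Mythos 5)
Your proof is correct and follows essentially the same route as the paper: multiply a limit counterexample by $g_n^2$ with $g_n=ap_4^{(n)}+b(p_2^{(n)})^2$ from Lemma \ref{LemmaIrreducible}, note the coefficients are stable so the $\rho$-limit exists, and iterate the degree-$8$ jump from the four hypothesized starting degrees. The only difference is that you spell out the divisibility/cancellation argument behind ``irreducible indefinite squares preserve non-SOS-ness,'' which the paper leaves implicit.
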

\begin{proof}
Suppose $\f=\sum_{\l\vdash2d}c_\l \m_\l\in\EEE\PP_{2d}\setminus\EEE\SS_{2d}$. So for all $n\ge2d$ we have $\f^{\rho,(n)}\in \P_{n,2d}^\rho$ but $\f^{\rho,(n)}\notin\Sigma_{n,2d}^\rho$ for all $n$ in some infinite set $I$. Now, taking $g_n$ as in Lemma \ref{LemmaIrreducible}, for all $n\ge2d$ we have $\f^{\rho,(n)}g_n^2\in \P_{n,2d+8}^\rho$ but, since $g_n$ is irreducible indefinite for all $n$, $\f^{\rho,(n)}g_n^2\notin\Sigma_{n,2d+8}^\rho$ for all $n\in I$, so $\lim_{n\to\infty}^\rho \f^{\rho,(n)}g_n^2 \in\EEE\PP_{2d+8}\setminus\EEE\SS_{2d+8}$ (this limit exists since the coefficients of $g_n$ do not depend on $n$ by Lemma \ref{LemmaIrreducible}).
\end{proof}

\section{Symmetry reduction and the limit pseudomoment cone}\label{SectionPseudomomentCone}
A form $f\in H_{n,d}$ is a sum of squares if and only if there exists a positive semidefinite matrix $A$ (denoted $A\succeq0$) such that $f=\langle A,\uu\uu^\top\rangle$, where $\uu$ is a vector whose entries are the monomials of degree $d$ in the variables $x_1,\dots,x_n$ and $\langle\cdot,\cdot\rangle$ denotes the Frobenius inner product. Similarly, $f\in H_{n,d}^S$ is a sum of squares if and only if there exists $A\succeq0$ such that $f=\langle A,\sym_n(\uu\uu^\top)\rangle$. The row size of the matrix $\sym_n(\uu\uu^\top)$ equals the dimension of $H_{n,d}$ as a real vector space, which is $O(n^d)$. However, by using a \emph{symmetry basis} for $H_{n,d}$ \cite{gatermann2004symmetry, blekherman2020symmetric} one can find a block diagonal matrix $\Q_{n,2d}=\sym_n(\vv\vv^\top)$ of size depending only on $d$, where $\vv$ is a vector whose entries are the elements of this symmetry basis, so that $f\in H_{n,d}^S$ is a sum of squares if and only if there exists $A\succeq0$ such that $f=\langle A,\Q_{n,2d}\rangle$. Moreover, a stability property (Proposition \ref{decomp}) of the decomposition of $H_{n,d}$ into irreducible $\mathcal{S}_n$-submodules guarantees that the blocks of $\Q_{n,2d}$ remain the same size for all $n\ge2d$ and furthermore, via a gluing lemma (Lemma \ref{basis}), that there exists a $\varphi$-limit $\Q_{2d}$ when $n$ goes to infinity. This will allow us to give a membership criterion similar to the above ones, i.e., $\f\in\SS_{2d}$ if and only if there exists $A\succeq0$ such that $\f=\langle A,\Q_{2d}\rangle$ (Proposition \ref{Qrep}, which is similar to Theorem 4.15 in \cite{blekherman2020symmetric}). The blocks of $\Q_{2d}$ correspond to the $\mathcal{S}_n$-isotypic components of $H_{n,d}$ and their row size is the number of $\mathcal{S}_n$-irreducible copies in the isotypic component (in the examples below we will see that not every isotypic component is needed, further reducing the size of $\Q_{2d}$).

\subsection{Symmetry bases}

We construct the blocks of the matrices $\Q_{2d}$ with \emph{symmetry bases}. 

\begin{de}\label{symbasis}
Let $G$ be a finite group and $V$ a finite dimensional $G$-module. A \emph{symmetry basis} of $V$ is a set $\{u_1,\dots,u_r\}\subset V$ such that 
\begin{enumerate}
\item for each $1\le i\le r$, the $G$-orbit of $u_i$ spans an irreducible $G$-module $U_i$,
\item $V=\bigoplus_{i=1}^r U_i$,
\item if $\Hom_G(U_i,U_j)\ne\{0\}$ then there exists a $G$-isomorphism from $U_i$ to $U_j$ that sends $u_i$ to $u_j$.
\end{enumerate}
\end{de}

For the purposes of this paper $G=\mathcal{S}_n$, whose irreducible representations have been widely studied \cite{fulton2013representation, sagan2013symmetric, serre1977linear}. The irreducible representations of $\mathcal{S}_n$ are in bijective correspondence with the partitions $\l$ of $n$, these are the \emph{Specht modules} $\Sc_\l$, which are subspaces of the group algebra $\C[\mathcal{S}_n]$ and which can be defined via Young symmetrizers. The \emph{Young diagram} of a partition $\l$ is the left justified array of boxes whose first row, from top to bottom, has $\l_1$ boxes, whose second row has $\l_2$ boxes, and so on.
\begin{center}
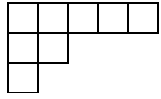
 
$\tiny\ydiagram{5,2,1}$
\captionof{figure}{The Young diagram corresponding to the partition $(5,2,1)$}
\end{center}

A \emph{Young tableau} is a Young diagram each of whose boxes contains an element from a given set. 
For $\l\vdash n$ let $\t$ be a Young tableau of shape $\l$ filled with the numbers $1,\ldots,n$. Let $R_i$ be the permutation group of the elements of row $i$ of $\t$ and $C_j$ the permutation group of the elements of column $j$ of $\t$. 
 
Define the following elements in $\C[\mathcal{S}_n]$:
\begin{eqnarray*}
\aa_\t&=&\prod_{i=1}^{\ell(\l)}\frac1{|R_i|}\sum_{\sigma\in R_i}\sigma,\\
\bb_\t&=&\prod_{j=1}^{\l_1}\frac1{|C_j|}\sum_{\sigma\in C_j}(-1)^{\sigma}\sigma\quad \text{where}\,\,\, (-1)^\sigma\,\,\,\text{denotes the sign of}\,\, \sigma,\\
Y_\t&=&\aa_\t \bb_\t.
\end{eqnarray*}
The $Y_\t$ are the \emph{Young symmetrizers}; $\aa_\t$ is the \emph{row symmetrizer} of $\t$, and $\bb_\t$ is the \emph{column symmetrizer} of $\t$. We will abuse notation and call $\aa_\t$, $\bb_\t$ and $Y_\t$ instead by the shape of $\t$, i.e., we will call them $\aa_\l$, $\bb_\l$ and $Y_\l$. The Specht modules are the images of the group algebra under Young symmetrizers, i.e., $\Sc_\l=Y_\l\C[\mathcal{S}_n]$ ($\cong_{\mathcal{S}_n}\bb_\l\aa_\l\C[\mathcal{S}_n]\cong_{\mathcal{S}_n}\C[\mathcal{S}_n]Y_\l\cong_{\mathcal{S}_n}\C[\mathcal{S}_n]\bb_\l\aa_\l$ where $\cong_{\mathcal{S}_n}$ denotes $\mathcal{S}_n$-isomorphism) where $Y_\l V:=\{Y_\l(v):v\in V\}$.

For an $\mathcal{S}_n$-module $V$ and $\l\vdash n$, denote by $V_\l$ the \emph{isotypic component} of $V$ corresponding to $\l$, i.e., $V=\bigoplus_{\l\vdash n}V_\l$ with $V_\l\cong_{\mathcal{S}_n}\Sc_\l^{\oplus v_\l}$ where $v_\l\in\N$ is the number of copies of $\Sc_\l$ in $V$. The following proposition says that Young symmetrizers can be used to find symmetry bases for the isotypic components of $\SSS_n$-modules.

The real irreducible representations of $\SSS_n$ are in fact \emph{absolutely irreducible}, i.e., remain irreducible when considered as representations over $\C$, so Schur's lemma holds.

\begin{prop}\label{copies}
Let $V$ be an $\SSS_n$-module and $\l\vdash n$. 

\begin{enumerate}
\item[(i)] The number of copies of $\Sc_\l$ in $V$ equals $\dim(Y_\l V)$.

\item[(ii)] If $v\in V$ is such that $Y_\l(v)\ne0$ then the span of the orbit of $Y_\l(v)$ is $\SSS_n$-isomorphic to $\Sc_\l$.

\item[(iii)] If $S$ spans $V$ then any maximal independent subset of $Y_\l S$ is a symmetry basis for $V_\l$. 
\end{enumerate}
\end{prop}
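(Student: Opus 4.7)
The plan is to rest the entire argument on two standard facts about the Young symmetrizer $Y_\l$ viewed as an element of the group algebra $\C[\SSS_n]$: first, $Y_\l^2=c_\l Y_\l$ for a nonzero scalar $c_\l$, so that $Y_\l/c_\l$ is a primitive idempotent; second, the map $a\mapsto aY_\l$ realizes an $\SSS_n$-equivariant isomorphism $\C[\SSS_n]/\{a:aY_\l=0\}\cong\C[\SSS_n]Y_\l\cong\Sc_\l$. Combined with the Artin--Wedderburn decomposition $\C[\SSS_n]\cong\bigoplus_\mu\End(\Sc_\mu)$, under which $Y_\l$ has a nonzero (rank-one) component only in the $\l$-block, this yields the key calculation $\dim(Y_\l\Sc_\mu)=\delta_{\l\mu}$ for every irreducible $\Sc_\mu$.

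For part (i), I would decompose $V=\bigoplus_\mu V_\mu$ with $V_\mu\cong\Sc_\mu^{\oplus v_\mu}$; since each $V_\mu$ is a left $\C[\SSS_n]$-submodule, left multiplication by $Y_\l$ preserves the decomposition, giving $\dim(Y_\l V)=\sum_\mu v_\mu\dim(Y_\l\Sc_\mu)=v_\l$. For part (ii), fix $v\in V$ with $Y_\l v\ne 0$ and consider the left $\SSS_n$-equivariant map $\psi:\C[\SSS_n]\to V$, $a\mapsto aY_\l v$. It vanishes on the left annihilator $\{a:aY_\l=0\}$, so it factors through the quotient identified above to produce a nonzero $\SSS_n$-equivariant map $\bar{\psi}:\Sc_\l\to V$ carrying the generator $Y_\l\in\C[\SSS_n]Y_\l$ to $Y_\l v$. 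Irreducibility of $\Sc_\l$ forces $\bar{\psi}$ to be injective, and its image is precisely $\vspan(\SSS_n\cdot(Y_\l v))$, which gives (ii).

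For part (iii), let $\{u_1,\ldots,u_r\}\subseteq Y_\l S$ be a maximal linearly independent subset, write $u_i=Y_\l s_i$ with $s_i\in S$, and set $U_i:=\vspan(\SSS_n\cdot u_i)$. Since $S$ spans $V$, $Y_\l S$ spans $Y_\l V$, so (i) forces $r=v_\l$. By (ii), each $U_i$ is $\SSS_n$-isomorphic to $\Sc_\l$ via a canonical isomorphism $\bar{\psi}_i:\Sc_\l\to U_i$ sending the distinguished generator $Y_\l\in\Sc_\l$ to $u_i$; the composition $\bar{\psi}_j\circ\bar{\psi}_i^{-1}$ is then an $\SSS_n$-isomorphism $U_i\to U_j$ mapping $u_i$ to $u_j$, verifying condition (3) of Definition \ref{symbasis}. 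Each $U_i$ lies in $V_\l$, so by semisimplicity I can write $V_\l=(\sum_i U_i)\oplus W$ for some submodule $W$. Because $u_i=(1/c_\l)Y_\l u_i\in Y_\l(\sum_i U_i)$, the subspace $Y_\l(\sum_i U_i)$ contains the linearly independent set $\{u_1,\ldots,u_{v_\l}\}$; since $\dim Y_\l V=v_\l$ by (i), this forces $Y_\l(\sum_i U_i)=Y_\l V=Y_\l V_\l$, so $Y_\l W=0$. But $W\cong\Sc_\l^{\oplus w}$ for some $w$ with $\dim(Y_\l W)=w$, so $W=0$. A final dimension count $\sum_i\dim U_i=v_\l\dim\Sc_\l=\dim V_\l$ then shows the sum is direct.

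The main care I expect is keeping straight the distinction between the non-equivariant linear action $v\mapsto Y_\l v$ on $V$ and the genuinely $\SSS_n$-equivariant induced map $\bar{\psi}:\Sc_\l\to U_i$; the latter exists only because left multiplication by $a$ commutes with right multiplication by $Y_\l$, which is exactly what makes $\bar{\psi}$ respect the group action. Beyond this, the argument is essentially bookkeeping with Schur's lemma and the Wedderburn block structure of $\C[\SSS_n]$.
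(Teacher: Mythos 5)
Your proof is correct, and in parts (ii) and (iii) it takes a genuinely different route from the paper's. For (i) the two arguments agree in substance: $Y_\l$ annihilates every isotypic component except the $\l$-one and has one-dimensional image on each copy of $\Sc_\l$ (you package this as $\dim(Y_\l\Sc_\u)=\delta_{\l\u}$ via the Wedderburn block picture; the paper quotes $Y_\l\C[\SSS_n]Y_\u=\{0\}$ and $\dim(Y_\l\Sc_\l)=1$ from Fulton--Harris). For (ii) the paper decomposes $v$ into isotypic pieces and computes $Y_\l$ applied to the orbit span directly, which forces it through the somewhat delicate identity $Y_\l\sigma=\sigma Y_{\sigma^{-1}\l}$ and the claim that $(Y_\l\sigma)(u_i)=c_\sigma u_i$ with $c_\sigma$ independent of $i$; you instead factor the equivariant map $a\mapsto aY_\l v$ through the left annihilator of $Y_\l$ to obtain a nonzero map $\Sc_\l\cong\C[\SSS_n]Y_\l\to V$ whose image is exactly the orbit span, so irreducibility finishes it. This is cleaner, and it buys you for free the canonical isomorphisms $\bar{\psi}_i$ that you then compose in (iii) to verify condition (3) of Definition \ref{symbasis}, where the paper must re-run its Schur-lemma computation. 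In (iii) your complement argument ($V_\l=(\sum_i U_i)\oplus W$, then $Y_\l W=0$ by the dimension count, hence $W=0$ by (i), then directness by counting dimensions) actually fills in the paper's rather terse step ``$U_j\subseteq\bigoplus_i A_i$, hence $V_\l=\bigoplus_i A_i$''; the idempotency you invoke there, $Y_\l u_i=c_\l u_i$, is legitimate precisely because $u_i=Y_\l s_i$ and $Y_\l^2=c_\l Y_\l$. The only point worth one sentence in a write-up is that the modules in the paper ($H_{n,d}$) are real while you work in $\C[\SSS_n]$: as the paper remarks just before the proposition, the $\SSS_n$-irreducibles are absolutely irreducible and realizable over $\R$, and $Y_\l$ has rational coefficients, so every step of your argument descends to $\R[\SSS_n]$; this is a presentational caveat, not a gap.
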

\begin{proof}
(i) Let $\u\vdash n$. By \cite[Exercise 4.24]{fulton2013representation} we have $Y_\l\C[\mathcal{S}_n]Y_\u=\{0\}$ when $\l\ne\u$. In such case $Y_\l\Sc_\u\cong Y_\l(\C[\mathcal{S}_n]Y_\u)=\{0\}$, hence $Y_\l V_\u=\{0\}$ and therefore $Y_\l V=Y_\l V_\l$. If $V$ has no copies of $\Sc_\l$ then $V_\l=\{0\}$ and so $\dim(Y_\l V)=0$.

If $V_\l\ne\{0\}$ then $V$ contains a copy $U\cong_{\mathcal{S}_n}\Sc_\l$. But $Y_\l\Sc_\l$ is one dimensional \cite[Lemma 4.23]{fulton2013representation}, so $Y_\l U$ will be too and therefore $Y_\l V\ne\{0\}$. Now let $V_\l=\bigoplus_{i=1}^r U_i$ with $U_i\cong_{\mathcal{S}_n}\Sc_\l$. $U_i$ is an $\mathcal{S}_n$-module, so $Y_\l U_i\subseteq U_i$, and $Y_\l U_i$ is one-dimensional, so $Y_\l U_i=\vspan(u_i)$ for some $u_i\in U_i\setminus\{0\}$. Hence $X:=\{u_1,\dots,u_r\}$ is a basis for $Y_\l V$, and so $V$ contains $\dim(Y_\l V)$ copies of $\Sc_\l$.

Note $X$ is a symmetry basis for $V_\l$: it satisfies (1) and (2) in Definition \ref{symbasis}, and it also satisfies (3): by Schur's lemma we have $\dim(\Hom_{\mathcal{S}_n}(U_i,U_j))=1$, so there exists a unique (up to scaling) $\mathcal{S}_n$-isomorphism $\varphi_{ij}$ between them. As an $\SSS_n$-isomorphism, $\varphi_{ij}$ commutes with $Y_\l$, and as shown below, it sends $u_i$ to a nonzero multiple of $u_j$. This is because for each $k=1,\dots,r$ we have $Y_\l U_k=\vspan(u_k)$, so $u_k=Y_\l(v_k)$ for some $v_k\in U_k$, and so  
\begin{align*}
    \varphi_{ij}(u_i)=\varphi_{ij}(Y_\l(v_i))=Y_\l(\varphi_{ij}(v_i))=c_{ij}u_j
\end{align*}
for some $c_{ij}\in\R$, nonzero because $u_i$ is a nonzero vector.

(ii) Let $v=w+w'$ with $w\in V_\l$ and $w'\in\bigoplus_{\u\ne\l} V_\u$. Let also $w=w_1+\dots+w_r$ with $w_i\in U_i$. Then $Y_\l(v)=Y_\l(w)=Y_\l(w_1)+\dots+Y_\l(w_r)=a_1u_1+\dots+a_ru_r$ for some $a_i\in\R$. The orbit of $Y_\l(v)$ is then $\SSS_n\cdot Y_\l(v):=\{a_1(\sigma\cdot u_1)+\dots+a_r(\sigma\cdot u_r):\sigma\in\SSS_n\}$, and let $W:=\vspan(\SSS_n\cdot Y_\l(v))$. Hence 
\begin{align*}
    Y_\l W&=\vspan(a_1Y_\l(\sigma\cdot u_1)+\dots+a_rY_\l(\sigma\cdot u_r):\sigma\in\SSS_n)\\
    &=\vspan(a_1(Y_\l\sigma)(u_1)+\dots+a_r(Y_\l\sigma)(u_r):\sigma\in\SSS_n)\\
    &=\vspan(a_1(c_\sigma u_1)+\dots+a_r(c_\sigma u_r):\sigma\in\SSS_n)\\
    &=\vspan(a_1u_1+\dots+a_ru_r)=\vspan(Y_\l(v))
\end{align*}
where $c_\sigma\in\R$ for each $\sigma\in\SSS_n$. And since $Y_\l(v)\ne0$ then $Y_\l W$ is one-dimensional (not all the $c_\sigma$ can be zero), so by (i), $W$ contains only one copy of $\Sc_\l$, but since $W\subseteq V_\l$ then $W\cong_{\SSS_n}\Sc_\l$. We remark that $(Y_\l\sigma)(u_i)=c_\sigma u_i$, for a constant $c_\sigma$ not depending on $i$, because $Y_\l\sigma=\sigma Y_{\sigma^{-1}\l}$ (see \cite[Lemma 2.3.3]{sagan2013symmetric}, also $\sigma^{-1}\l$ corresponds to the tableau obtained by permuting the entries of $\t_\l$ by $\sigma^{-1}$), the $U_i$ are $\SSS_n$-isomorphic, and $(Y_\l\sigma)(u_i)=Y_\l(\sigma\cdot u_i)\in\vspan(u_i)$.

(iii) Since $S$ spans $V$ then the $V_\l$ components of the elements of $S$ will span $V_\l$, so $Y_\l S=\{Y_\l(w_1),\dots,Y_\l(w_k)\}$ for some $w_i$ such that $V_\l=\vspan(w_1,\dots,w_k)$, so $Y_\l S$ spans $Y_\l V_\l$. Now select a maximal independent subset of $Y_\l S$, say $M:=\{Y_\l(w_1),\dots,Y_\l(w_r)\}$ without loss of generality. By (ii), for each $i=1,\dots, r$, the orbit of $Y_\l(w_i)$ spans a copy $A_i$ of $\Sc_\l$. Since $M$ is a basis for $Y_\l V_\l$, and $Y_\l V_\l=\vspan(u_1,\dots,u_r)$, then $U_j\subseteq\bigoplus_{i=1}^r A_i$ for each $j=1,\dots,r$, and since $V_\l=\bigoplus_{i=1}^r U_i$ then $V_\l=\bigoplus_{i=1}^r A_i$. The above proves that $M$ satisfies the first two conditions for being a symmetry basis for $V_\l$, and the third condition follows as in the proof in (i) for $X$.
\end{proof}

A remarkable fact is that the decomposition of $H_{n,d}$ into irreducible $\mathcal{S}_n$-submodules remains \emph{stable} when $n\ge2d$ \cite[Theorem 4.7]{riener2013exploiting}. This stability is crucial for symmetry reduction at the limit, and thus for our description of $\SS_{2d}$ and its dual. 

Let $\ll$ denote the partition obtained from the partition $\l$ by removing its first part, e.g., $\overline{(10,3,2,1)}=(3,2,1)$ or $\overline{(n)}=\emptyset$.

\begin{prop}\label{decomp}
For each positive integer $d$ and each $\mu\in\Lambda_{\le d}$ there are nonnegative integers $h_{\u,d}$ such that for all $n\ge2d$ $$H_{n,d}\cong_{\SSS_n}\bigoplus_{\u\in \Lambda_{\le d}}\Sc_{(n-|\u|,\u)}^{\oplus h_{\u,d}},$$ where $(n-|\u|,\u)$ is the partition whose first part is $n-|\u|$ and whose other parts are the parts of $\u$. 
\end{prop}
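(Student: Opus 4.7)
The plan is to decompose $H_{n,d}$ as an $\mathcal{S}_n$-module according to the orbits of monomials, identify each orbit with a Young permutation module, and then apply Young's rule together with a stability property for Kostka numbers that is enabled by the hypothesis $n\ge 2d$.

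First I would write $H_{n,d}=\bigoplus_{\nu\vdash d,\,\ell(\nu)\le n} W_\nu^{(n)}$, where $W_\nu^{(n)}$ is the $\mathbb{C}$-span of the monomials $x^\alpha$ whose exponent multiset is $\nu$. Taking the representative $x_1^{\nu_1}\cdots x_{\ell(\nu)}^{\nu_{\ell(\nu)}}$, its stabilizer in $\mathcal{S}_n$ is the Young subgroup $\mathcal{S}_{n-\ell(\nu)}\times\prod_i\mathcal{S}_{m_i(\nu)}$, where $m_i(\nu)$ is the multiplicity of $i$ as a part of $\nu$. Hence $W_\nu^{(n)}$ is isomorphic to the Young permutation module $M^{\eta(\nu,n)}$, where $\eta(\nu,n)$ has parts $n-\ell(\nu),\,m_1(\nu),\,m_2(\nu),\dots$; for $n\ge 2d$ this is a genuine partition of $n$, since $n-\ell(\nu)\ge d$ dominates each $m_i(\nu)$.

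Young's rule now gives $M^{\eta(\nu,n)}\cong\bigoplus_\lambda K_{\lambda,\eta(\nu,n)}\,\Sc_\lambda$, summed over partitions $\lambda\trianglerighteq\eta(\nu,n)$, where $K_{\lambda,\eta}$ is the Kostka number counting semistandard Young tableaux of shape $\lambda$ and content $\eta$. The dominance condition forces $\lambda_1\ge n-\ell(\nu)\ge n-d$, so every Specht summand has the form $\Sc_{(n-|\mu|,\mu)}$ for some $\mu\in\Lambda_{\le d}$. Collecting across all $\nu\vdash d$ yields
\begin{equation*}
H_{n,d}\cong\bigoplus_{\mu\in\Lambda_{\le d}}h_{\mu,d}^{(n)}\,\Sc_{(n-|\mu|,\mu)},\qquad h_{\mu,d}^{(n)}:=\sum_{\nu\vdash d}K_{(n-|\mu|,\mu),\,\eta(\nu,n)}.
\end{equation*}

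The key step is to show that $K_{(n-|\mu|,\mu),\eta(\nu,n)}$ is independent of $n$ for $n\ge 2d$, so that $h_{\mu,d}^{(n)}=:h_{\mu,d}$ stabilizes. In any such SSYT, column-strictness forbids a $1$ outside the first row, so all $n-\ell(\nu)$ copies of $1$ occupy the leftmost boxes of the first row. Since $n\ge 2d$ yields $n-\ell(\nu)\ge d\ge\mu_1$, these boxes lie strictly to the left of every box in the lower rows; the residual filling problem decouples into filling a free row of length $\ell(\nu)-|\mu|$ and the shape $\mu$ with joint content $(m_1(\nu),m_2(\nu),\dots)$, an $n$-free count. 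The only technical obstacle is to verify this decoupling precisely, but it reduces cleanly to the observation that the leftover first-row cells and the lower-row cells occupy disjoint columns once $n\ge 2d$, at which point the Kostka count factors through data depending only on $\nu$ and $\mu$.
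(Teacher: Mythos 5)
Your argument is correct; note, though, that the paper does not prove Proposition \ref{decomp} at all, but simply quotes the stability theorem from \cite[Theorem 4.7]{riener2013exploiting}. What you have written is essentially the standard proof of that cited result: split $H_{n,d}$ into monomial orbits, identify the span of the orbit of type $\nu\vdash d$ with the Young permutation module on the cosets of $\mathcal{S}_{n-\ell(\nu)}\times\prod_i\mathcal{S}_{m_i(\nu)}$, apply Young's rule, and check that the relevant Kostka numbers $K_{(n-|\mu|,\mu),\,\eta(\nu,n)}$ are independent of $n$ once $n\ge 2d$. The stabilization step is sound: all $n-\ell(\nu)$ entries equal to $1$ must occupy the leftmost cells of the first row, and since $n-\ell(\nu)\ge d\ge\mu_1$ the leftover first-row cells sit in columns strictly beyond $\mu_1$, so the residual filling problem (a free weakly increasing row of length $\ell(\nu)-|\mu|$ plus an SSYT of shape $\mu$ in the shifted alphabet, with joint content $(m_1(\nu),m_2(\nu),\dots)$) involves no $n$. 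Compared with the paper's citation, your route buys a self-contained proof and, as a bonus, an explicit $n$-free formula $h_{\mu,d}=\sum_{\nu\vdash d}K_{(n-|\mu|,\mu),\eta(\nu,n)}$ for the fake degrees, which is consistent with Remark \ref{n-d} (it gives $h_{\emptyset,d}=\pi(d)$ and $h_{(d),d}=1$). Two cosmetic points you should fix: (i) $\eta(\nu,n)=(n-\ell(\nu),m_1(\nu),m_2(\nu),\dots)$ need not be weakly decreasing past its first entry (e.g.\ $\nu=(2,2)$ gives $m_1=0$, $m_2=2$), so either sort it or invoke the fact that permutation modules and Kostka numbers are invariant under permuting the content; dominance against the sorted content still forces $\lambda_1\ge n-\ell(\nu)\ge n-d$, and hence also $n-|\mu|\ge d\ge\mu_1$, so $(n-|\mu|,\mu)$ is a genuine partition. (ii) The sentence asserting that the $1$-filled boxes ``lie strictly to the left of every box in the lower rows'' is not what you mean (they in fact cover all columns of the lower rows); the correct and sufficient statement is the one you give at the end, namely that the leftover first-row cells and the lower-row cells occupy disjoint columns. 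Neither point is a genuine gap.
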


The nonzero $h_{\mu,d}$ are known as the \emph{fake degrees} of $H_{n,d}$. Call $\F_d:=\{\mu\in\Lambda_{\le d}\,|\, h_{\mu,d}\ne0\}$.

\begin{rem}\label{n-d}
For $n\ge d$, the number of copies of the trivial representation $\Sc_{(n)}$ in $H_{n,d}$ equals $\pi(d)$, the dimension of $H_{n,d}^S$, hence $h_{\emptyset,d}=\pi(d)$. For $n\ge2d$ the number of copies of $\Sc_{(n-d,d)}$ in $H_{n,d}$ is exactly one, so $h_{(d),d}=1$, and also this is the only $\Sc_{\l}$ with $\l_1=n-d$ appearing in $H_{n,d}$. To see this, notice that $\dim(H_{n,d})\sim\frac{n^d}{d!}$ and the hook length formula implies that if $\l_1=n-l$, for $n\gg l$, then $\dim(\Sc_\l)\sim\dim(\Sc_{\ll})\frac{n^l}{l!}$. So the only irreducibles $\Sc_\l$ with $\l_1=n-d$ that could appear in $H_{n,d}$ are the ones with $\ll=(d)$ or $\ll=(1^d)$ (which correspond to the only one-dimensional representations of $\mathcal{S}_d$), and exactly one must appear. $\Sc_{(n-d,1^d)}$ cannot appear since $Y_{(n-d,1^d)}$ annihilates every monomial in $H_{n,d}$.
\end{rem}

\begin{de}
For $\l\vdash n$ let $\t_\l$ be the unique Young tableau of shape $\l$ with entries $1,\dots,n$ such that for any two entries in $\t_\l$ the following conditions are satisfied:
\begin{enumerate}
\item if they are on different rows then the one in the lowest row is smaller,
\item if they are in the same row then the leftmost is smaller. 
\end{enumerate}
For example $$\t_{(n)}=\tiny\Yvcentermath1\young(123\cdot\cdot\cdot n)\, ,\,\,\t_{(n-1,1)}=\tiny\Yvcentermath1\young(2\cdot\cdot\cdot n,1)\, ,\,\,\t_{(n-5,3,2)}=\tiny\Yvcentermath1\young(678\cdot\cdot\cdot n,345,12)$$
\end{de}
From now on we will assume that $Y_\l$ is the Young symmetrizer associated to the tableau $\t_\l$.

\begin{ex}[symmetry basis for $H_{n,2}$]\label{hn2}
First we show that for all $n\ge4$
\begin{align*}
H_{n,2}\cong_{\mathcal{S}_n}(\Sc_{(n)})^{\oplus2}\oplus(\Sc_{(n-1,1)})^{\oplus2}\oplus\Sc_{(n-2,2)}.
\end{align*}

By Remark \ref{n-d} we have $h_{\emptyset,2}=2$, $h_{(1^2),2}=0$ and $h_{(2),2}=1$, and by Proposition \ref{decomp} it only remains to find $h_{(1),2}$. For this purpose we can use the hook length formula to count dimensions, noting that $\dim(H_{n,d})=\binom{n+d-1}{d}$. There is also a combinatorial way of computing these multiplicities by using \emph{charges} of tableaux \cite[Proposition 4.13]{blekherman2020symmetric}. However, we also need to compute symmetry bases, so we do so with Young symmetrizers as in Proposition \ref{copies}.
 
Let $X=\{x_1^2,\dots,x_n^2,x_1x_2,\dots,x_{n-1}x_n\}$ be a basis of monomials for $H_{n,2}$. Observe $Y_{(n)}X=\{m_2^{(n)}, m_{1^2}^{(n)}\}$ is a symmetry basis for $(H_{n,2})_{(n)}$.
 
Since $h_{(1),2}=2$, a symmetry basis for $(H_{n,2})_{(n-1,1)}$ consists of two linearly independent elements in $Y_{(n-1,1)}X$:
\begin{align*}
Y_{(n-1,1)}(x_1x_3)&=\aa_{(n-1,1)}\bb_{(n-1,1)}(x_1x_3)=\frac12\aa_{(n-1,1)}(x_1x_3-x_2x_3)=\frac12x_1m_1(\x_{[1]})-\frac12m_{1^2}(\x_{[1]}),\\
Y_{(n-1,1)}(x_1^2)&=\frac12\aa_{(n-1,1)}(x_1^2-x_2^2)=\frac12x_1^2-\frac12m_2(\x_{[1]})
\end{align*}
where $m_\l(\x_{[k]}):=m_\l(x_{k+1},\dots,x_n)$. Finally $h_{(2),2}=1$, so a symmetry basis for $(H_{n,2})_{(n-2,2)}$ consist of just 
\begin{align*}
Y_{(n-2,2)}(x_1x_2)&=\aa_{(n-2,2)}\bb_{(n-2,2)}(x_1x_2)=\aa_{(n-2,2)}\frac14(x_1x_2-x_1x_4-x_2x_3+x_3x_4)\\
&=\frac{x_1x_2}4-\frac{x_1+x_2}4 m_1(\x_{[2]})+\frac14 m_{1^2}(\x_{[2]}).
\end{align*}
Rescaling we have the following symmetry bases for the isotypic components $(H_{n,2})_{(n)}$, $(H_{n,2})_{(n-1,1)}$ and $(H_{n,2})_{(n-2,2)}$:
\begin{align*}
\B_{\emptyset,2}^{(n)}&:=\{m_{1^2}^{(n)},\, m_2^{(n)}\},\\
\B_{(1),2}^{(n)}&:=\{x_1m_1(\x_{[1]})-m_{1^2}(\x_{[1]}),\, x_1^2-m_2(\x_{[1]})\},\\
\B_{(2),2}^{(n)}&:=\{x_1x_2-(x_1+x_2)m_1(\x_{[2]})+m_{1^2}(\x_{[2]})\}
\end{align*}
A symmetry basis for $H_{n,2}$ is just the union of these sets.
$\triangle$
\end{ex}

It will be practical to denote by $\B_{\u,d}$ the set obtained after removing the superindices and the $\x_{[k]}$'s in $\B_{\u,d}^{(n)}$. For example 
$$\B_{\emptyset,2}:=\{m_{1^2},m_2\}\qquad\text{and}\qquad \B_{(1),2}:=\{x_1m_1-m_{1^2},x_1^2-m_2\}.$$
Note that from $\B_{\u,d}$ we can recover $\B_{\u,d}^{(n)}$ just by evaluating the monomial means in $\x_{[k]}$ where $\x=\{x_1,\dots,x_n\}$ and $k=|\u|$.

From a symmetry basis for $H_{n,d}$ we compute the matrices $\Q_{n,2d}$ describing $\Sigma_{n,2d}^S$. These matrices are block diagonal, with a block $\Q_{\mu,2d}^{(n)}$ for each $\mu\in\F_d$. For example, if $\B_{(1),2}^{(n)}=\{f_n,g_n\}$, then 
\begin{align*}
    \Q_{(1),4}^{(n)}=\begin{bmatrix}
    \sym_n(f_n^2) & \sym_n(f_ng_n)\\
    \sym_n(f_ng_n) & \sym_n(g_n^2).
\end{bmatrix}
\end{align*}
Now observe that the coefficients of the $x^\a m_\l$ in each element of the symmetry bases considered do not depend on the number of variables. This, together with Lemma \ref{gluing}, guarantees the existence of $\lim_{n\to\infty}^\varphi \Q_{n,2d}$, from which we describe $\SS_{2d}$. For example
\begin{align*}   \Q_{(1),4}:=\lim_{n\to\infty}\Q_{(1),4}^{(n)}=\begin{bmatrix}
    \lim_{n\to\infty}^\varphi\sym_n(f_n^2) & \lim_{n\to\infty}^\varphi\sym_n(f_ng_n)\\
    \lim_{n\to\infty}^\varphi\sym_n(f_ng_n) & \lim_{n\to\infty}^\varphi\sym_n(g_n^2).
\end{bmatrix}
\end{align*}
In place of $\lim_{n\to\infty}^\varphi\sym_n$ we will simply write $\sym_\infty$.

\begin{lemma}\label{basis}
Fix $k,d\in\N$ with $k\le d$, fix $\a,\b\in\N^d$ such that $\supp(\a),\supp(\b)\subseteq\{1,\dots,k\}$, and fix $\l,\u\in\Lambda$. Let $\x=(x_1,\dots,x_n)$. Then 
\begin{align*}
\sym_\infty(\x^\alpha m_\l(\x_{[k]})\cdot \x^\b m_\u(\x_{[k]}))=\m_{\pi}
\end{align*}
where $\pi$ is the partition obtained by gluing $\a+\b$, $\l$ and $\u$.
\end{lemma}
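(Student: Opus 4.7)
The plan is to reduce the statement to a direct application of Lemma \ref{gluing} on the complementary variables $\x_{[k]}$, after exploiting the disjointness between the support of $\x^{\a+\b}$ and that of the monomial-mean factors.

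First I would commute the scalar monomial factors and write
$$\x^\a m_\l(\x_{[k]}) \cdot \x^\b m_\u(\x_{[k]}) \;=\; \x^{\a+\b}\cdot m_\l(\x_{[k]}) \, m_\u(\x_{[k]}).$$
Since $\supp(\a+\b)\subseteq\{1,\dots,k\}$ while $m_\l(\x_{[k]})$ and $m_\u(\x_{[k]})$ involve only $x_{k+1},\dots,x_n$, these two factors involve disjoint sets of variables. Applying Lemma \ref{gluing} to the $n-k$ variables $\x_{[k]}$ gives
$$m_\l(\x_{[k]})\,m_\u(\x_{[k]}) \;=\; \sum_{\v\vdash |\l|+|\u|} c_\v^{(n-k)}\, m_\v(\x_{[k]}),$$
with $c_{\l\u}^{(n-k)}\to 1$ and $c_\v^{(n-k)}\to 0$ for $\v\neq \l\u$ as $n\to\infty$.

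Next I would use the key identity: for any exponent vector $\gamma\in\N^n$ with shape (sorted sequence of nonzero entries) equal to the partition $\pi$, the orbit-stabilizer formula gives $\sym_n(\x^\gamma)=m_\pi^{(n)}$. Because $\x^{\a+\b}$ and each monomial appearing in $m_\v(\x_{[k]})$ have disjoint supports, every monomial in the expansion of $\x^{\a+\b}m_\v(\x_{[k]})$ has shape exactly $(\a+\b)\v$. Hence
$$\sym_n\!\bigl(\x^{\a+\b}\,m_\v(\x_{[k]})\bigr) \;=\; m_{(\a+\b)\v}^{(n)},$$
being an average of monomials all of the same shape. Combining with the previous display,
$$\sym_n\!\bigl(\x^\a m_\l(\x_{[k]})\cdot\x^\b m_\u(\x_{[k]})\bigr) \;=\; \sum_{\v\vdash|\l|+|\u|} c_\v^{(n-k)}\, m_{(\a+\b)\v}^{(n)}.$$

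Finally, the partitions $(\a+\b)\v$ are distinct for distinct $\v$ (gluing by a fixed partition is injective), so the expression above is already written in the monomial-mean basis of $H_{n,d}^S$. Taking the $\varphi$-limit coordinate-wise as in Definition \ref{philimit} and using $c_{\l\u}^{(n-k)}\to1$, $c_\v^{(n-k)}\to 0$ for $\v\ne\l\u$, only the term $\v=\l\u$ survives, yielding $\m_{(\a+\b)\l\u}=\m_\pi$. The only real subtlety, and where I would spend care in the writeup, is justifying that no cross-contribution arises from the symmetrization of $\x^{\a+\b}$ against variables in $\x_{[k]}$; this is handled entirely by the disjoint-support observation, which is the reason the shape of each monomial is exactly the gluing partition rather than some partition where parts of $\a+\b$ have been merged with parts of $\v$.
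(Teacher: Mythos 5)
Your proposal is correct and follows essentially the same route as the paper's proof: expand $m_\l(\x_{[k]})m_\u(\x_{[k]})$ via Lemma \ref{gluing}, use the disjoint supports to get $\sym_n(\x^{\a+\b}m_\v(\x_{[k]}))=m_{(\a+\b)\v}^{(n)}$, and pass to the limit so only $\v=\l\u$ survives. You merely spell out two details the paper leaves implicit (the orbit argument behind the symmetrization identity and that the gluing coefficients live in $n-k$ variables, which is harmless as $n\to\infty$).
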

\begin{proof}
Using the notation of Lemma \ref{gluing}, we have
\begin{eqnarray*}
\sym_n(\x^{\a+\b}m_\l(\x_{[k]})m_\u(\x_{[k]}))&=&\sym_n\sum_\v c_\v^{(n)}\x^{\a+\b}m_\v(\x_{[k]})\\
&=&\sum c_\v^{(n)}\sym_n(\x^{\a+\b}m_\v(\x_{[k]}))\\
&=&\sum c_\v^{(n)} m_{(\a+\b)\v}^{(n)}\to \m_{(\a+\b)\l\u}
\end{eqnarray*}
where the third equality follows since $\supp(\a),\supp(\b)\subseteq\{1,\dots,k\}$ and the last step follows from Lemma \ref{gluing}.
\end{proof}

\begin{de}
Given the symmetry basis $\B_{\u,d}^{(n)}=\{f_1^{(n)},\dots,f_{h_{\u,d}}^{(n)}\}$ for $(H_{n,d})_\l$ where $\l=(n-|\u|,\u)$ is such that $\u\in\F_d$, we define the \emph{isotypic matrix} of $(H_{n,d})_\l$ with respect to $\B_{\u,d}^{(n)}$ as the $h_{\u,d}\times h_{\u,d}$ matrix $\Q_{\u,2d}^{(n)}:=\left(\sym_n(f_i^{(n)}f_j^{(n)})\right)_{i,j}$. 

Define $\Q_{\u,2d}:=\lim^\varphi_{n\to\infty}\Q_{\u,2d}^{(n)}$, which exists by Lemma \ref{basis} (Proposition \ref{prop4} provides an example).
\end{de}


Let $S_{+}^k$ denote the cone of positive semidefinite $k\times k$ matrices. The following theorem is implied by Corollary 4.4 in \cite{blekherman2020symmetric}, along with the corresponding statement for the limit cone.
\begin{teo}\label{Qrep}
For all $n\ge2d$
\begin{align*}
\S_{n,2d}^S&=\left\{\sum_{\u\in \F_d}\langle A_\u,\Q_{\u,2d}^{(n)}\rangle: A_\u\in S^{h_{\u,d}}_+\text{ for each }\u\in \F_d\right\},\\
\SS_{2d}&=\left\{\sum_{\u\in \F_d}\left\langle A_\u,\Q_{\u,2d}\right\rangle: A_\u\in S^{h_{\u,d}}_+\text{ for each }\u\in \F_d\right\}.
\end{align*} 
\end{teo}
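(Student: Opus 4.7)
The finite-$n$ assertion is \cite[Corollary 4.4]{blekherman2020symmetric}: via a symmetry-adapted basis $\vv$, the Gram matrix $\sym_n(\vv\vv^\top)$ block-diagonalizes by Schur's lemma into the isotypic matrices $\Q_{\u,2d}^{(n)}$, and every symmetric sum of squares arises as a Frobenius inner product of a block-diagonal PSD matrix with $\sym_n(\vv\vv^\top)$. My focus is therefore the limit statement, which I would prove by two inclusions.

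For the $\supseteq$ inclusion, fix PSD matrices $A_\u$, $\u\in\F_d$. For each $n\ge2d$ the form $\sum_\u\langle A_\u,\Q_{\u,2d}^{(n)}\rangle$ is a symmetric sum of squares by the finite case, so its coefficient vector $c^{(n)}$ in the monomial mean basis lies in $\S_{n,2d}^\varphi$. By the definition $\Q_{\u,2d}=\lim_{n\to\infty}^\varphi \Q_{\u,2d}^{(n)}$, applied entry by entry via Lemma \ref{basis}, the vectors $c^{(n)}$ converge in $\R^{\pi(2d)}$ to the coefficient vector $c$ of $\sum_\u\langle A_\u,\Q_{\u,2d}\rangle$. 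Closedness of $\SS_{2d}=\overline{\bigcup_n\S_{n,2d}^\varphi}$ then forces $c\in\SS_{2d}$.

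For the $\subseteq$ inclusion, given $c\in\SS_{2d}$, write $c=\lim_k c^{(k)}$ with $c^{(k)}\in\S_{n_k,2d}^\varphi$ and $n_k\to\infty$. The finite case supplies PSD matrices $A_\u^{(k)}$ with $c^{(k)}=\sum_\u\langle A_\u^{(k)},\Q_{\u,2d}^{(n_k)}\rangle$. I want to extract a convergent subsequence $A_\u^{(k)}\to A_\u\succeq0$ and then conclude $c=\sum_\u\langle A_\u,\Q_{\u,2d}\rangle$ by passing to the limit in both factors, using the entry-wise convergence of $\Q_{\u,2d}^{(n_k)}$ to $\Q_{\u,2d}$.

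The main technical obstacle is securing a uniform bound on $\|A_\u^{(k)}\|$. I would argue by contradiction: if $M_k:=\max_\u\|A_\u^{(k)}\|\to\infty$, the rescaled tuple $B_\u^{(k)}:=A_\u^{(k)}/M_k$ is a bounded PSD tuple with $\sum_\u\langle B_\u^{(k)},\Q_{\u,2d}^{(n_k)}\rangle=c^{(k)}/M_k\to0$. Extracting a limit $B_\u^{(k)}\to B_\u$ yields a nonzero PSD tuple with $\sum_\u\langle B_\u,\Q_{\u,2d}\rangle=0$. The required contradiction is a kernel-triviality lemma: this identity with each $B_\u\succeq0$ forces every $B_\u=0$. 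The lemma is proved by Cholesky-factoring $B_\u=L_\u^\top L_\u$, so that for every $n\ge2d$ the polynomial $g^{(n)}:=\sum_\u|L_\u\vec{f}_\u^{(n)}|^2$ is pointwise nonnegative, and $\sym_n(g^{(n)})=\sum_\u\langle B_\u,\Q_{\u,2d}^{(n)}\rangle$ has coefficient vector tending to zero. Inspecting coefficients of $\m_\v$ for $\v$ realizable as a unique partition-gluing in the sense of Lemma \ref{basis}, and combining with the linear independence of the symmetry bases $\B_{\u,d}^{(n)}$ furnished by Proposition \ref{copies}, forces $L_\u=0$ for every $\u$. Transferring this finite-$n$ closedness of the SDP image to the limit is the heart of the proof.
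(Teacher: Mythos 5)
Your route is genuinely different from the paper's, because the paper offers no proof of Theorem \ref{Qrep} at all: it is obtained there by citing Corollary 4.4 of \cite{blekherman2020symmetric} for each fixed $n$, together with the analogous limit statement (cf.\ Theorem 4.15 of that reference). Within your more self-contained argument, the finite-$n$ step and the inclusion $\supseteq$ are correct: for fixed PSD matrices $A_\u$ the coefficient vectors of $\sum_\u\langle A_\u,\Q^{(n)}_{\u,2d}\rangle$ lie in $\S^{\varphi}_{n,2d}$ and converge (Lemma \ref{basis}, applied entrywise) to those of $\sum_\u\langle A_\u,\Q_{\u,2d}\rangle$, which therefore lies in the closed cone $\SS_{2d}$. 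The compactness scheme for $\subseteq$ (nestedness to arrange $n_k\to\infty$, rescaling, subsequential limits of the Gram matrices, passing to the limit in both factors) is also a viable plan.

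The gap is the kernel-triviality lemma, which you correctly flag as the heart of the matter but do not actually establish. Two concrete problems with your sketch: (i) the finite-$n$ Cholesky argument needs the exact identity $\sum_\u\langle B_\u,\Q^{(n)}_{\u,2d}\rangle=0$ in order to force $g^{(n)}=0$ and then invoke linear independence of $\B_{\u,d}^{(n)}$; in your situation these coefficient vectors merely tend to $0$ as $n\to\infty$, so no finite-$n$ vanishing is available; (ii) the proposed remedy of inspecting coefficients of $\m_\v$ for uniquely realizable gluings fails, because distinct pairs of terms $\x^\a m_\l$, possibly coming from different isotypic blocks, can glue to the same partition, so a single coefficient of $\sym_\infty(g^2)$ is not a sum of squares of entries of the $L_\u$ and may vanish by cancellation; already for $2d=4$ the coefficient of $\m_{2,1^2}$ collects the self-term of $x_1m_1$ together with the cross-terms $x_1^2\cdot m_{1^2}$ and $m_2\cdot m_{1^2}$. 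The lemma you need (PSD $B_\u$ with $\sum_\u\langle B_\u,\Q_{\u,2d}\rangle=0$ forces every $B_\u=0$) is true, but requires a genuine argument: for example, decompose each $B_\u$ into rank-one pieces, use that $\SS_{2d}$ contains no lines together with your own $\supseteq$ inclusion to conclude that $\sym_\infty(g_\u^2)=0$ for each $\u$, where $g_\u$ is a fixed nonzero linear combination of the elements of $\B_{\u,d}$, and then pair this with a point of $\PP_{2d}^*\subseteq\SS_{2d}^*$ arising from a suitably generic measure with infinite support; under i.i.d.\ substitution the pairing becomes the second moment of the nonzero limiting polynomial in $x_1,\dots,x_d$, a contradiction. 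Until some such argument is supplied, the inclusion $\SS_{2d}\subseteq\left\{\sum_\u\langle A_\u,\Q_{\u,2d}\rangle\right\}$ is not proved.
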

\begin{rem}
Notice that Theorem \ref{Qrep} gives a way to test, via semidefinite programming, whether a given symmetric form or symmetric limit form is a sum of squares.
\end{rem}

\begin{prop}\label{prop4}
The limit sums of squares cone $\SS_4$ is given by 
\begin{align*}
\Q_{\emptyset,4}=\begin{bmatrix}
\m_{1^4} & \m_{2,1^2}\\
\m_{2,1^2} & \m_{2^2}
\end{bmatrix}\quad\text{and}\quad\Q_{(1),4}=\begin{bmatrix}
\m_{2,1^2}-\m_{1^4} & \m_{3,1}-\m_{2,1^2}\\
\m_{3,1}-\m_{2,1^2} & \m_{4}-\m_{2^2}
\end{bmatrix}.
\end{align*}
Namely,
\begin{align*}
\SS_4=\{\langle A,\Q_{\emptyset,4}\rangle+\langle B,\Q_{(1),4}\rangle\,|\, A,B\in S^2_+\}.
\end{align*}
\end{prop}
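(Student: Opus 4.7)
The plan is to specialize Theorem~\ref{Qrep} to $d = 2$. By Remark~\ref{n-d} together with the dimension count in Example~\ref{hn2}, the relevant partitions are $\F_2 = \{\emptyset, (1), (2)\}$ with fake degrees $h_{\emptyset,2} = 2$, $h_{(1),2} = 2$, $h_{(2),2} = 1$, so a priori
\begin{align*}
\SS_4 = \left\{\langle A, \Q_{\emptyset,4}\rangle + \langle B, \Q_{(1),4}\rangle + c\,\Q_{(2),4} \,:\, A, B \in S^2_+,\ c \geq 0\right\}.
\end{align*}
The first step is to compute $\Q_{\emptyset,4}$ and $\Q_{(1),4}$ directly from the symmetry bases $\B_{\emptyset,2}$ and $\B_{(1),2}$ recorded in Example~\ref{hn2}. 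Each basis element is a signed sum of terms of the form $x^\alpha m_\lambda(\x_{[k]})$, and Lemma~\ref{basis} says that $\sym_\infty$ of a product of two such terms is $\m_\pi$ where $\pi$ is obtained by gluing $\alpha + \alpha'$ with $\lambda$ and $\lambda'$. Expanding each pairwise product and collecting the resulting $\m_\pi$'s (with the sign cancellations coming from the basis expansions) should reproduce exactly the matrices in the statement.

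The main obstacle is to show that the $(2)$-isotypic block is redundant so that only the $\emptyset$- and $(1)$-blocks appear. The strategy is to compute $\Q_{(2),4}$ explicitly from the single basis element $f = x_1 x_2 - (x_1 + x_2)\,m_1(\x_{[2]}) + m_{1,1}(\x_{[2]})$, once again via Lemma~\ref{basis} applied to $f^2$. The resulting $1 \times 1$ entry should simplify to $\m_{1^4} - 2\m_{2,1^2} + \m_{2^2}$. Comparing with the entries of $\Q_{\emptyset,4}$, this coincides with $\langle M, \Q_{\emptyset,4}\rangle$ for the rank-one positive semidefinite matrix
\begin{align*}
M = \begin{bmatrix} 1 & -1 \\ -1 & 1 \end{bmatrix} \in S^2_+.
\end{align*}
Consequently, for every $c \geq 0$ the contribution $c\,\Q_{(2),4}$ can be absorbed by replacing $A$ with $A + cM \in S^2_+$ in the $\emptyset$-block. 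Therefore the $(2)$-block can be dropped without changing the cone.

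Combining the two routine gluing computations for $\Q_{\emptyset,4}$ and $\Q_{(1),4}$ with the redundancy observation for $\Q_{(2),4}$, the description of $\SS_4$ in the statement follows from Theorem~\ref{Qrep}. I expect the redundancy step to be the only non-mechanical part; the rest is a direct bookkeeping exercise in applying the gluing lemma to the explicit bases.
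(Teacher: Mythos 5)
Your proposal is correct and matches the paper's own proof essentially step for step: both compute $\Q_{\emptyset,4}$, $\Q_{(1),4}$ and $\Q_{(2),4}$ from the symmetry bases of Example \ref{hn2} via the gluing Lemma \ref{basis}, and both then discard the $(2)$-block by noting $\Q_{(2),4}=\m_{2^2}-2\m_{2,1^2}+\m_{1^4}=(1,-1)\Q_{\emptyset,4}(1,-1)^\top$, which is exactly your absorption of $c\,\Q_{(2),4}$ into $A+cM$ with $M$ the rank-one matrix $\bigl(\begin{smallmatrix}1&-1\\-1&1\end{smallmatrix}\bigr)$. The only remaining work is the routine bookkeeping you already outline, and your claimed values for the entries agree with the paper's.
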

\begin{proof}
Using Lemma \ref{basis} on the bases obtained in Example \ref{hn2} we obtain
\begin{align*}
\sym_\infty(m_{1^2}\cdot m_{1^2})=\m_{1^4},\,\,\,
\sym_\infty(m_{1^2}\cdot m_{2})&=\m_{2,1^2},\\
\sym_\infty(m_{2}\cdot m_{2})&=\m_{2^2},
\end{align*}
therefore $\Q_{\emptyset,4}=\begin{bmatrix}
\m_{1^4} & \m_{2,1^2}\\
\m_{2,1^2} & \m_{2^2}
\end{bmatrix}$. 

From the symmetry basis $\B_{(1),2}=\{x_1m_1-m_{1^2},\, x_1^2-m_2\}$ we get $(x_1m_1-m_{1^2})^2=x_1^2m_1^2-2x_1m_1m_{1^2}+m_{1^2}^2$ and then $$\sym_\infty(x_1^2m_1^2-2x_1m_1m_{1^2}+m_{1^2}^2)=\sym_\infty(x_1^2m_1^2)-2\sym_\infty(x_1m_1m_{1^2})+\sym_\infty(m_{1^2}^2)$$ which by Lemma \ref{basis} equals $\m_{2,1^2}-2\m_{1^4}+\m_{1^4}=\m_{2,1^2}-\m_{1^4}$. Similarly,
\begin{align*}
\sym_\infty((x_1m_1-m_{1^2})(x_1^2-m_2))=\m_{3,1}-\m_{2,1^2}\text{ and }
\sym_\infty(x_1^2-m_2)^2=\m_{4}-\m_{2^2},
\end{align*}
therefore $\Q_{(1),4}=\begin{bmatrix}
\m_{2,1^2}-\m_{1^4} & \m_{3,1}-\m_{2,1^2}\\
\m_{3,1}-\m_{2,1^2} & \m_{4}-\m_{2^2}
\end{bmatrix}$. From the symmetry basis $\B_{(2),2}$ we get
$$\Q_{(2),4}=\sym_\infty(x_1x_2-(x_1+x_2)m_1+m_{1^2})^2=\m_{2^2}-2\m_{2,1^2}+\m_{1^4}.$$
Hence $\SS_4=\{\langle A_\emptyset,\Q_{\emptyset,4}\rangle+\langle A_{(1)},\Q_{(1),4}\rangle+\langle A_{(2)},\Q_{(2),4}\rangle:A_\emptyset\in S_2^+,A_{(1)}\in S_2^+,A_{(2)}\ge0\}$. However, one can see that in fact 
\begin{align*}
\SS_4=\{\langle A_\emptyset,\Q_{\emptyset,4}\rangle+\langle A_{(1)},\Q_{(1),4}\rangle:A_\emptyset\in S_2^+,A_{(1)}\in S_2^+\}
\end{align*}
since $\Q_{(2),4}=(1,-1)\Q_{\emptyset,4}(1,-1)^\top$.
\end{proof}

\begin{prop}\label{prop6}
The limit sums of squares cone $\SS_6$ is given by
\begin{align*}
\Q_{\emptyset,6}&=\begin{bmatrix}
\m_{1^6} & \m_{2,1^4} & \m_{3,1^3}\\ \m_{2,1^4} & \m_{2^2,1^2} & \m_{3,2,1}\\ \m_{3,1^3} & \m_{3,2,1} & \m_{3^2}
\end{bmatrix},\\
\Q_{(1),6}&=\begin{bmatrix}
\m_{2,1^4}-\m_{1^6} & \m_{3,1^3}-\m_{2,1^4} & \m_{2^2,1^2}-\m_{2,1^4} & \m_{4,1^2}-\m_{3,1^3}\\
\m_{3,1^3}-\m_{2,1^4} & \m_{4,1^2}-\m_{2^2,1^2} & \m_{3,2,1}-\m_{2^2,1^2} & \m_{5,1}-\m_{3,2,1}\\
\m_{2^2,1^2}-\m_{2,1^4} & \m_{3,2,1}-\m_{2^2,1^2} & \m_{2^3}-\m_{2^2,1^2} & \m_{4,2}-\m_{3,2,1} \\
\m_{4,1^2}-\m_{3,1^3} & \m_{5,1}-\m_{3,2,1} & \m_{4,2}-\m_{3,2,1} & \m_6-\m_{3^2}
\end{bmatrix}\quad\text{and}\\
\Q_{(1^2),6}&=\m_{4,2}-\m_{4,1^2}-\m_{3^2}+2\m_{3,2,1}-\m_{2^3}.
\end{align*}
Namely,
\begin{align*}
\SS_6=\{\langle A,\Q_{\emptyset,6}\rangle+\langle B,\Q_{(1),6}\rangle+\alpha\Q_{(1^2),6}\,|\,A\in S^3_+,B\in S^4_+,\alpha\ge0\}.
\end{align*}
\end{prop}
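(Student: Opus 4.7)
The proof follows the same template as Proposition \ref{prop4}. First, using Remark \ref{n-d} together with a dimension count against $\dim H_{n,3}=\binom{n+2}{3}$, I verify that $\F_3=\{\emptyset,(1),(2),(1^2),(3)\}$ with fake degrees $h_{\emptyset,3}=3$, $h_{(1),3}=4$, $h_{(2),3}=2$, $h_{(1^2),3}=1$, $h_{(3),3}=1$ (and $h_{(2,1),3}=h_{(1^3),3}=0$). By Theorem \ref{Qrep}, $\SS_6$ is a priori described in terms of five isotypic matrices $\Q_{\u,6}$ with $\u\in\F_3$.

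Next, applying Proposition \ref{copies}(iii) to the monomial basis of $H_{n,3}$ with the appropriate Young symmetrizers $Y_{(n-|\u|,\u)}$, I compute symmetry bases. The trivial component has $\B_{\emptyset,3}=\{m_{1^3},m_{2,1},m_3\}$, and for $\u=(1)$ extracting a maximal independent subset of $Y_{(n-1,1)}X$ (where $X$ is the monomial basis) gives the four generators
\[\B_{(1),3}=\{x_1 m_{1^2}-m_{1^3},\ x_1^2 m_1-m_{2,1},\ x_1 m_2-m_{2,1},\ x_1^3-m_3\},\]
with the convention that $m_\l$ denotes $m_\l(\x_{[1]})$. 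The bases $\B_{(2),3}$, $\B_{(1^2),3}$, $\B_{(3),3}$ are computed analogously and contain one or two elements each.

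The entries of every block are then immediate from Lemma \ref{basis}: any product $\x^\a m_\l(\x_{[k]})\cdot\x^\b m_\u(\x_{[k]})$ satisfies $\sym_\infty(\cdot)=\m_\pi$ where $\pi$ glues $\a+\b$, $\l$, and $\u$. For instance, $\sym_\infty((x_1 m_{1^2}-m_{1^3})^2)=\m_{2,1^4}-2\m_{1^6}+\m_{1^6}=\m_{2,1^4}-\m_{1^6}$, the $(1,1)$-entry of $\Q_{(1),6}$, and the remaining entries of $\Q_{\emptyset,6}$, $\Q_{(1),6}$, and $\Q_{(1^2),6}$ are produced identically.

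The main obstacle is the last step: to show that the two remaining blocks $\Q_{(2),6}$ and $\Q_{(3),6}$ are redundant, in the sense that any expression $\langle C,\Q_{(2),6}\rangle+\b\Q_{(3),6}$ with $C\succeq 0$ and $\b\ge 0$ can be rewritten as $\langle A,\Q_{\emptyset,6}\rangle+\langle B,\Q_{(1),6}\rangle+\g\Q_{(1^2),6}$ with $A,B\succeq 0$ and $\g\ge 0$. This is the direct analogue of the identity $\Q_{(2),4}=(1,-1)\Q_{\emptyset,4}(1,-1)^\top$ exploited in Proposition \ref{prop4}: each generator of a higher isotypic component, once squared and symmetrized, coincides in the limit with an explicit expression built from the retained bases, because the discrepancies are products of monomial means with overlapping supports whose contributions vanish at infinity by Lemma \ref{gluing}. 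Once the two explicit factorizations $\Q_{(2),6}=V_{(2)}^\top Q^\bullet V_{(2)}$ and $\Q_{(3),6}=V_{(3)}^\top Q^\bullet V_{(3)}$ (with $Q^\bullet$ the block-diagonal matrix of retained blocks) are produced, Theorem \ref{Qrep} yields the claimed description of $\SS_6$.
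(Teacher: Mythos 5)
Your computations of the retained blocks are correct, and in fact your choice of $x_1m_2-m_{2,1}$ as the third generator of $\B_{(1),3}$ (it is, up to scale, $Y_{(n-1,1)}(x_1x_3^2)$, in place of the paper's $x_1^2m_1-x_1m_2$) yields the displayed $\Q_{(1),6}$ directly from Lemma \ref{basis}, without the congruence the paper applies afterwards. Your reduction of the redundancy of $\Q_{(2),6}$ and $\Q_{(3),6}$ to factorizations through the retained blocks is also the right strategy. The gap is that you never produce or verify these factorizations, and the reason you give for their existence (``the discrepancies are products of monomial means with overlapping supports whose contributions vanish at infinity by Lemma \ref{gluing}'') is not a proof: the gluing lemma only tells you how to evaluate the limit entries, it does not imply that the quadratic forms attached to the $(2)$- and $(3)$-isotypic generators factor through the other blocks. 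Nor does the span argument of Section \ref{ssectionPartialSymR} help here, since terms such as $x_1x_2m_1$ or $x_1x_2m_{1^2}$ occurring in $\B_{(2),3}$ and $\B_{(3),3}$ do not lie in the span of the terms $\x^\a m_\l$ appearing in $\B_{\emptyset,3}\cup\B_{(1),3}\cup\B_{(1^2),3}$; that argument only subsumes all five blocks into the larger partial symmetrization matrix of Proposition \ref{proppartial6}, not into the three retained blocks. The redundancy is a specific linear-algebraic coincidence in degree $6$ that must be exhibited, exactly as the single identity $\Q_{(2),4}=(1,-1)\Q_{\emptyset,4}(1,-1)^\top$ had to be exhibited in degree $4$.

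To close the gap, compute $\Q_{(2),6}$ and $\Q_{(3),6}$ from the bases $\B_{(2),3}$ and $\B_{(3),3}$ of Example \ref{apphn3} and verify the identities
\begin{align*}
\begin{bmatrix}\alpha\\ \beta\end{bmatrix}^\top\Q_{(2),6}\begin{bmatrix}\alpha\\ \beta\end{bmatrix}&=\begin{bmatrix}\alpha\\ 2\beta-\alpha\\ -2\beta\end{bmatrix}^\top\Q_{\emptyset,6}\begin{bmatrix}\alpha\\ 2\beta-\alpha\\ -2\beta\end{bmatrix}+2\beta^2\,\Q_{(1^2),6},\\
\Q_{(3),6}&=\begin{bmatrix}1&0&-1&0\end{bmatrix}\Q_{(1),6}\begin{bmatrix}1&0&-1&0\end{bmatrix}^\top.
\end{align*}
Note that the first identity genuinely requires both $\Q_{\emptyset,6}$ and $\Q_{(1^2),6}$, so a single-block congruence as in the quartic case cannot be expected; once these are in hand, writing a positive semidefinite $C$ as a sum of rank-one matrices converts $\langle C,\Q_{(2),6}\rangle+\beta'\Q_{(3),6}$ into an expression of the claimed form, and Theorem \ref{Qrep} finishes the argument exactly as you indicate. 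With these identities supplied, your proof coincides with the paper's.
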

\begin{proof}
See Appendix (\ref{appiso6}).
\end{proof}

\subsection{Partial symmetry reduction}\label{ssectionPartialSymR}

Symmetry bases are not straightforward to compute in general. However one can bypass finding them explicitly and instead consider the possible terms that could appear in them, and this will be enough for describing the limit sum of squares cone. 
Let $\cS$ be a finite set of polynomials and denote by $\sum\cS^2$ the set of sums of squares of polynomials in the linear span of $\cS$. Observe that if for finite sets of polynomials $\cS,\cT$ one has $\cS\subseteq\vspan(\cT)$, then $\sum\cS^2\subseteq\sum\cT^2$. Equivalently, if $\{v_1,\dots,v_k\}\subseteq\vspan\{w_1,\dots,w_l\}$ where the $v_i$ and $w_j$ are polynomials, then 
\begin{align*}
\{\langle A,\vv^\top\vv\rangle:A\in S^k_+\}\subseteq\{\langle B,\w^\top\w\rangle:B\in S^l_+\}
\end{align*}
where $\vv:=(v_1,\dots,v_k)$ and $\w:=(w_1,\dots,w_l)$. Using this idea and the fact that the terms of the symmetry basis of $H_{n,d}$ constructed above are of the form $\x^\a m_\v$ where $\supp(\x^\a)\subseteq\{1,2,\dots,k\}$ for some $k\le d$, then, by setting $\w$ to be a vector whose entries are the $\x^\a m_\v$, Theorem \ref{Qrep} implies that $\SS_{2d}=\{\langle A,\RR_{2d}\rangle:A\succeq0\}$ where $\RR_{2d}:=\sym_\infty(\w^\top\w)$. Moreover, the entries of $\RR_{2d}$ are monomial means $\m_\pi$, and the row size of $\RR_{2d}$ only depends on $d$. To see this we only need a small improvement to Lemma \ref{basis}.

\begin{lemma}\label{basis2}
Fix $k,l,d\in\N$ with $k,l\le d$, fix $\a,\b\in\N^d$ such that $\supp(\a)\subseteq\{1,\dots,k\}$ and $\supp(\b)\subseteq\{1,\dots,l\}$, and fix $\l,\u\in\Lambda$. Let $\x=(x_1,\dots,x_n)$. Then 
\begin{align*}
   \sym_\infty(\x^\alpha m_\l(\x_{[k]})\cdot\x^\b m_\u(\x_{[l]}))=\m_{\pi} 
\end{align*}
where $\pi$ is the partition obtained by gluing $\a+\b$, $\l$ and $\u$.
\end{lemma}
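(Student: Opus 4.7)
The plan is to follow the same strategy as the proof of Lemma \ref{basis}, with a modest modification to handle the fact that now $m_\l$ and $m_\u$ may be evaluated on different complements $\x_{[k]}$ and $\x_{[l]}$. Without loss of generality I would assume $k \le l$, so that $\x_{[l]} \subseteq \x_{[k]}$, and rewrite the product as $\x^{\a+\b} \cdot m_\l(\x_{[k]})\, m_\u(\x_{[l]})$, where $\supp(\a+\b) \subseteq \{1,\dots,l\}$.

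First I would expand the product of monomial means in the monomial mean basis of $\x_{[k]}$:
\begin{align*}
m_\l(\x_{[k]})\, m_\u(\x_{[l]}) = \sum_{\v \vdash |\l|+|\u|} c_\v^{(n)}\, m_\v(\x_{[k]}),
\end{align*}
with nonnegative coefficients summing to $1$ (seen by evaluating at $\mathbf{1}$). The same counting argument as in Lemma \ref{gluing} gives $c_\v^{(n)} = O(1/n)$ for $\v \ne \l\u$, and hence $c_{\l\u}^{(n)} \to 1$: the total number of monomials in the product is $\Theta(n^{\ell(\l)+\ell(\u)})$, while those whose $m_\l$ and $m_\u$ variables have any overlap -- the only way to produce a shape other than $\l\u$ -- number $O(n^{\ell(\l)+\ell(\u)-1})$.

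Next I would compute $\sym_n(\x^{\a+\b}\, m_\v(\x_{[k]}))$. In Lemma \ref{basis} this was exactly $m_{(\a+\b)\v}^{(n)}$ because $\supp(\a+\b)$ and the variable set of $m_\v$ were disjoint. Here $\supp(\a+\b) \subseteq \{1,\dots,l\}$ may meet the variable set $\{x_{k+1},\dots,x_n\}$ of $m_\v(\x_{[k]})$ in the fixed-size set $\{x_{k+1},\dots,x_l\}$, but only an $O(1/n)$ fraction of the $\Theta(n^{\ell(\v)})$ monomials in $m_\v(\x_{[k]})$ uses a variable from this fixed set, so $\sym_n(\x^{\a+\b}\, m_\v(\x_{[k]}))$ agrees with $m_{(\a+\b)\v}^{(n)}$ up to $O(1/n)$ in each monomial mean coefficient.

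Combining and passing to the limit,
\begin{align*}
\sym_\infty\bigl(\x^{\a+\b}\, m_\l(\x_{[k]})\, m_\u(\x_{[l]})\bigr) = \lim_{n\to\infty} \sum_\v c_\v^{(n)}\, \sym_n\bigl(\x^{\a+\b}\, m_\v(\x_{[k]})\bigr) = \m_{(\a+\b)\l\u} = \m_\pi,
\end{align*}
since the only surviving term is $\v = \l\u$. The main obstacle is really just careful bookkeeping of two independent $O(1/n)$ error sources -- overlap between the variables chosen for $m_\l$ and for $m_\u$, and overlap between $\supp(\a+\b)$ and the variable set of $m_\v(\x_{[k]})$ -- but both are controlled by essentially the same counting estimate as in Lemma \ref{gluing}, so no new ideas are required.
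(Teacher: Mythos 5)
There is a real problem with your first step. Under your normalization $k\le l$ the product $m_\l(\x_{[k]})\,m_\u(\x_{[l]})$ is \emph{not} a symmetric polynomial in the variables $\x_{[k]}=\{x_{k+1},\dots,x_n\}$: it is symmetric only in the common variables $\x_{[l]}$ (and, separately, in $x_{k+1},\dots,x_l$). Hence the claimed expansion $m_\l(\x_{[k]})\,m_\u(\x_{[l]})=\sum_{\v}c_\v^{(n)}m_\v(\x_{[k]})$ does not exist at all; for instance with $k=1$, $l=2$, $\l=\u=(1)$, $n=4$, the product $\tfrac16(x_2+x_3+x_4)(x_3+x_4)$ assigns unequal coefficients to $x_2x_3$, $x_2x_4$ and $x_3x_4$, which no combination of $m_{(2)}(\x_{[1]})$ and $m_{(1,1)}(\x_{[1]})$ can reproduce. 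The remark ``coefficients summing to $1$, seen by evaluating at $\1$'' presupposes the expansion it is meant to justify. You also cannot rescue it by first symmetrizing the product over $\x_{[k]}$, because that symmetrization does not commute with multiplication by $\x^{\a+\b}$: $\supp(\a+\b)$ may meet $\{k+1,\dots,l\}\subseteq\supp(\x_{[k]})$, which is precisely the overlap your second estimate is trying to control. So as written the argument has a gap at its very first identity, and the later $O(1/n)$ bookkeeping cannot compensate for it.

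The fix is essentially the paper's decomposition, with the roles of $k$ and $l$ mirrored: split the monomial symmetric polynomial over the \emph{larger} variable set as $M_\l(\x_{[k]})=M_\l(\x_{[l]})+\sum\x^{\gamma}$, where every monomial of the remainder uses one of the finitely many variables $x_{k+1},\dots,x_l$ and therefore the remainder contains only an $O(n^{\ell(\l)-1})$ out of $\Theta(n^{\ell(\l)})$ monomials. Then $m_\l(\x_{[k]})$ is a ratio of binomial coefficients tending to $1$ times $m_\l(\x_{[l]})$ plus a remainder whose symmetrization has monomial-mean coefficients tending to $0$, by the same count as in (\ref{binom}). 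The main term becomes $\x^{\a+\b}m_\l(\x_{[l]})m_\u(\x_{[l]})$, with both means over the same set $\x_{[l]}$, which is disjoint from $\supp(\a+\b)\subseteq\{1,\dots,l\}$, so Lemma \ref{gluing} and Lemma \ref{basis} apply verbatim and your second estimate (that $\sym_n(\x^{\a+\b}m_\v(\x_{[k]}))$ is close to $m^{(n)}_{(\a+\b)\v}$, which is correct in itself) is no longer needed.
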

\begin{proof}
If $k=l$ the result is precisely Lemma \ref{basis}, so suppose $k>l$ and write $M_\u(\x_{[l]})=M_\u(\x_{[k]})+\sum \x^\u$ where each monomial in the sum $\sum \x^\u$ has at least one variable in $\{x_{l+1},\dots,x_k\}$. Then
\begin{align*}
\x^\a m_\l(\x_{[k]})\cdot \x^\b m_\u(\x_{[l]})=\x^{\a+\b}m_\l(\x_{[k]})m_\u(\x_{[k]})\frac{\binom{n-k}{\ell(\mu)}}{\binom{n-l}{\ell(\mu)}}+\x^{\a+\b}m_\l(\x_{[k]})\frac1{\binom{n-l}{\ell(\u)}\frac{\ell(\u)!}{t(\u)}}\sum \x^\u.
\end{align*}
As in (\ref{binom}) in the proof of Lemma \ref{gluing}, the number of monomials in $\sum \x^\u$ is $O(n^{\ell(\u)-1})$, but $\binom{n-l}{\ell(\u)}\frac{\ell(\u)!}{t(\u)}$ is $O(n^{\ell(\u)})$, hence the coefficients of $\sym_n(\x^{\a+\b}m_\l(\x_{[k]})\frac1{\binom{n-l}{\ell(\u)}\frac{\ell(\u)!}{t(\u)}}\sum \x^\u)$ in the monomial mean basis go to zero as $n$ goes to infinity. And then we proceed as in Lemma \ref{basis}.
\end{proof}

\begin{prop}\label{proppartial4}
\begin{align*}
\SS_4&=\{\langle A,\RR_4\rangle\,|\, A\in S^4_+\}\quad\text{where}\\
\RR_4&=\begin{bmatrix}\m_{1^4} & \m_{1^4} & \m_{2,1^2} & \m_{2,1^2}\\ \m_{1^4} & \m_{2,1^2} & \m_{2,1^2} & \m_{3,1}\\ \m_{2,1^2} & \m_{2,1^2} & \m_{2^2} & \m_{2^2}\\ \m_{2,1^2} & \m_{3,1} & \m_{2^2} & \m_4\end{bmatrix}.
\end{align*}
\end{prop}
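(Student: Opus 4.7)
The plan is to apply the partial symmetry reduction developed just above (via Lemma \ref{basis2}) to the case $d = 2$, combined with the explicit description of $\SS_4$ from Proposition \ref{prop4}. The conceptual step is to choose a short vector $\w$ of ``atoms'' of the form $\x^\alpha m_\lambda(\x_{[k]})$ (with $k \le 2$ and $\supp(\alpha) \subseteq \{1,\dots,k\}$) whose linear span contains every element of the symmetry bases $\B_{\emptyset,2} = \{m_{1^2},\, m_2\}$ and $\B_{(1),2} = \{x_1 m_1 - m_{1^2},\, x_1^2 - m_2\}$ from Example \ref{hn2}. Since those two bases involve only four polynomials, I take
\[
\w = (m_{1^2},\, x_1 m_1,\, m_2,\, x_1^2)^\top;
\]
then $\B_{\emptyset,2} \subseteq \w$, and the two elements of $\B_{(1),2}$ are differences of entries of $\w$.

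Next I would verify $\RR_4 = \sym_\infty(\w\w^\top)$ entry by entry using Lemma \ref{basis2}. Recording the $(\alpha,\lambda)$-data of the four entries of $\w$ as $(0,(1,1))$, $((1),(1))$, $(0,(2))$, $((2),\emptyset)$, Lemma \ref{basis2} gives $\sym_\infty(w_i w_j) = \m_\pi$ where $\pi$ is the partition obtained by gluing $\alpha_i + \alpha_j$, $\lambda_i$, and $\lambda_j$. For example, the $(2,4)$-entry is $\sym_\infty((x_1 m_1)(x_1^2)) = \m_{3,1}$; the nine other distinct entries follow the same template and reproduce the displayed matrix.

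For the inclusion $\{\langle A,\RR_4\rangle : A \in S_+^4\} \subseteq \SS_4$, I would decompose $A = \sum_k v_k v_k^\top$ and observe that
\[
\langle A, \RR_4\rangle \;=\; \lim_{n \to \infty}^\varphi \sum_k \sym_n\bigl((v_k^\top \w_n)^2\bigr),
\]
where $\w_n$ denotes $\w$ read in $n$ variables. Each summand $\sym_n(f^2) = \frac{1}{n!}\sum_{\sigma\in\SSS_n}(\sigma f)^2$ is itself a symmetric sum of squares, so $\langle A,\RR_4\rangle$ lies in the $\varphi$-limit of $\S_{n,4}^S$, i.e., in $\SS_4$.

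For the reverse inclusion, I would invoke Proposition \ref{prop4}: every $\f \in \SS_4$ equals $\langle A_\emptyset,\Q_{\emptyset,4}\rangle + \langle A_{(1)},\Q_{(1),4}\rangle$ for some $A_\emptyset, A_{(1)} \in S_+^2$. Since each basis element of $\B_{\u,2}$ is an explicit linear combination of entries of $\w$, one reads off $2\times 4$ matrices $M_\u$ with $M_\u \w = \B_{\u,2}$ (as column vectors), and a short check yields $\Q_{\u,4} = M_\u \RR_4 M_\u^\top$. Hence $\langle A_\u,\Q_{\u,4}\rangle = \langle M_\u^\top A_\u M_\u,\RR_4\rangle$, and the PSD matrix $A := M_\emptyset^\top A_\emptyset M_\emptyset + M_{(1)}^\top A_{(1)} M_{(1)}$ witnesses $\f = \langle A,\RR_4\rangle$. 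The only subtle point — the main mild obstacle — is that the $(2)$-isotypic symmetry basis contains $x_1 x_2 \notin \vspan(\w)$, so the $(2)$-component is not directly accommodated by $\w$; this is harmless precisely because Proposition \ref{prop4} already showed $\Q_{(2),4} = (1,-1)\Q_{\emptyset,4}(1,-1)^\top$, so that component contributes nothing new.
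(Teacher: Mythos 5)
Your proposal is correct and follows essentially the same route as the paper: you take the same vector $\w=(m_{1^2},x_1m_1,m_2,x_1^2)$ of non-redundant terms from $\B_{\emptyset,2}$ and $\B_{(1),2}$, compute $\RR_4=\sym_\infty(\w\w^\top)$ via Lemma \ref{basis2}, and use Proposition \ref{prop4} to dismiss the $(2)$-isotypic component. The only difference is cosmetic: you spell out both inclusions explicitly (rank-one decomposition of $A$ for one direction, the congruences $\Q_{\u,4}=M_\u\RR_4M_\u^\top$ for the other), whereas the paper delegates these to the general partial-symmetrization discussion and Theorem \ref{Qrep}.
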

\begin{proof}
By Proposition \ref{prop4} we only need $\Q_{\emptyset,4}$ and $\Q_{(1),4}$ to describe $\SS_4$, thus collecting the terms in $\B_{\emptyset,2}^{(n)}$ and $\B_{(1),2}^{(n)}$ (see Example \ref{hn2}) we get $$\vv:=(m_{1^2}^{(n)},m_2^{(n)},x_1m_1(\x_{[1]}),m_{1^2}(\x_{[1]}),x_1^2,m_2(\x_{[1]})).$$
Observe we can always take them with positive coefficients since the linear span does not change. However, in light of Lemma \ref{basis2}, we see that there are redundant entries in $\vv$ which will give equal rows in $\sym_\infty(\vv^\top\vv)$. For example, if $f=m_{1^2}^{(n)}$ and $g=m_{1^2}(\x_{[1]})$ then $\sym_\infty(fv_i)=\sym_\infty(gv_i)$ for any entry $v_i$ of $\vv$. These are redundancies in the sense that, after removing them, the set $\{\langle A,\sym_\infty(\vv^\top\vv)\rangle:A\succeq0\}$ will not change. After removing redundancies we obtain
$$\ww:=(m_{1^2},x_1m_1,m_2,x_1^2),$$
and then $\SS_4=\{\langle A,\RR_4\rangle:A\in S^4_+\}$ where 
\begin{align*}
\RR_4:=\sym_\infty(\ww^\top\ww)=\begin{bmatrix}
\m_{1^4} & \m_{1^4} & \m_{2,1^2} & \m_{2,1^2}\\
\m_{1^4} & \m_{2,1^2} & \m_{2,1^2} & \m_{3,1}\\
\m_{2,1^2} & \m_{2,1^2} & \m_{2^2} & \m_{2^2}\\
\m_{2,1^2} & \m_{3,1} & \m_{2^2} & \m_4
\end{bmatrix}.
\end{align*}
\end{proof}

We define $\RR_{2d}$ in an analogous way for higher degrees, although not in a precise way. A straightforward way to construct an $\RR_{2d}$ would be to collect all the terms in a symmetry basis for $H_{n,d}$, remove redundant terms modulo Lemma \ref{basis2} to get a row vector of terms $\ww$, and then compute $\RR_{2d}:=\sym_\infty(\ww^\top\ww)$. If one also knows that only some of the $\B_{\mu,d}$ are needed, like in Proposition \ref{proppartial4}, then one can compute a simpler $\RR_{2d}$. We will call any of the matrices obtained in such a way a \emph{partial symmetrization matrix for} $\SS_{2d}$. The $\RR_{2d}$ look simpler and can be computed with less effort than the $\Q_{2d}$ matrices at the cost of being possibly larger. With them we will compute the tropicalization of the dual cone to sums of squares (see Section \ref{sectiontrop}).

\begin{teo}\label{Rrep}
Let $\RR_{2d}$ be an $N\times N$ partial symmetrization matrix for $\SS_{2d}$, then
\begin{align*}
\SS_{2d}&=\left\{\left\langle A,\RR_{2d}\right\rangle\,|\, A\in S^N_+\right\}.
\end{align*} 
\end{teo}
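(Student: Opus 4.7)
The plan is to establish the two inclusions separately, leveraging Theorem \ref{Qrep} together with the structural fact that every element of each symmetry basis $\B_{\mu,d}^{(n)}$ is, by construction, an $n$-independent linear combination of the symbolic terms appearing in $\ww$.

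For the inclusion $\{\langle A,\RR_{2d}\rangle : A\in S^N_+\}\subseteq\SS_{2d}$, I would factor $A=LL^\top$ and, for each $n\ge 2d$, let $\ww^{(n)}$ denote the row vector obtained by evaluating the symbolic entries of $\ww$ at $(x_1,\ldots,x_n)$. Then
\[
\langle A,\sym_n(\ww^{(n)\top}\ww^{(n)})\rangle \;=\; \sym_n\bigl(\ww^{(n)} A\,\ww^{(n)\top}\bigr) \;=\; \sym_n\bigl(\|L^\top\ww^{(n)\top}\|^2\bigr),
\]
which is a sum of squares in $H_{n,2d}^S$, since averaging a sum of squares of forms over the $\SSS_n$-action again produces a sum of squares. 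Taking the $\varphi$-limit as $n\to\infty$ and using that $\RR_{2d}=\sym_\infty(\ww^\top\ww)$ exists entrywise by Lemma \ref{basis2} yields $\langle A,\RR_{2d}\rangle\in\SS_{2d}$.

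For the reverse containment, I would let $\f\in\SS_{2d}$ and invoke Theorem \ref{Qrep} to write $\f=\sum_{\mu\in\F_d}\langle A_\mu,\Q_{\mu,2d}\rangle$ with $A_\mu\in S^{h_{\mu,d}}_+$. By the construction of $\B_{\mu,d}^{(n)}$ recalled in Section \ref{ssectionPartialSymR}, each element of $\B_{\mu,d}^{(n)}$ is a fixed scalar combination, with coefficients independent of $n$, of terms of the form $x^\alpha m_\lambda(\x_{[k]})$ with $k\le d$; and by definition each such term (possibly modulo the redundancies of Lemma \ref{basis2}) appears as an entry of $\ww^{(n)}$. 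Collecting coefficients, there exists a constant matrix $M_\mu$ such that the row vector of basis elements of $\B_{\mu,d}^{(n)}$ equals $\ww^{(n)}M_\mu$. Hence
\[
\sym_n\bigl((\ww^{(n)}M_\mu)^\top(\ww^{(n)}M_\mu)\bigr) \;=\; M_\mu^\top\,\sym_n(\ww^{(n)\top}\ww^{(n)})\,M_\mu,
\]
and passing to the $\varphi$-limit gives $\Q_{\mu,2d}=M_\mu^\top\RR_{2d}M_\mu$. Therefore
\[
\f \;=\; \sum_{\mu\in\F_d}\langle A_\mu,M_\mu^\top\RR_{2d}M_\mu\rangle \;=\; \Bigl\langle \sum_{\mu\in\F_d} M_\mu A_\mu M_\mu^\top,\ \RR_{2d}\Bigr\rangle,
\]
with $\sum_{\mu\in\F_d}M_\mu A_\mu M_\mu^\top\succeq 0$ as a sum of PSD matrices.

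The main technical point to verify is that the matrices $M_\mu$ really do have entries independent of $n$; this is already visible in how the symmetry bases in Example \ref{hn2} and Proposition \ref{prop4} were explicitly written down, and in general it follows from the Young symmetrizer construction combined with the way redundant terms are collapsed via Lemma \ref{basis2}. No substantial new obstacle arises beyond this bookkeeping: the theorem is essentially a linear-algebraic repackaging of Theorem \ref{Qrep} using that a symmetrized outer product of a larger spanning set can only produce a larger (not smaller) Gram-matrix image under the PSD cone.
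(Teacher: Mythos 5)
Your proposal is correct and follows essentially the same route as the paper, which justifies Theorem \ref{Rrep} via the observation that $\cS\subseteq\vspan(\cT)$ implies $\sum\cS^2\subseteq\sum\cT^2$, combined with Theorem \ref{Qrep}, Lemma \ref{basis2}, and the $n$-independence of the coefficients of the $\x^\a m_\l$ in the symmetry bases; your matrices $M_\mu$ just make that span argument explicit, and your factorization $A=LL^\top$ supplies the easy inclusion the paper leaves implicit. The only point to phrase carefully is that after removing redundant terms the basis elements equal $\ww^{(n)}M_\mu$ only up to redundancy at finite $n$, with the identity $\Q_{\mu,2d}=M_\mu^\top\RR_{2d}M_\mu$ holding exactly in the $\varphi$-limit by Lemma \ref{basis2} — which you already acknowledge.
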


\begin{prop}\label{proppartial6}
\begin{align*}
\SS_6&=\{\langle A,\RR_6\rangle\,|\, A\in S^{11}_+\}\quad\text{where}\\
\RR_6&=\begin{bmatrix}
\m_{1^6} & \m_{1^6} & \m_{2,1^4} & \m_{2,1^4} & \m_{2,1^4} & \m_{2,1^4} & \m_{2,1^4} & \m_{2,1^4} & \m_{2,1^4} & \m_{3,1^3} & \m_{3,1^3}\\
\m_{1^6} & \m_{2,1^4} & \m_{2,1^4} & \m_{2,1^4} & \m_{2^2,1^2} & \m_{2,1^4} & \m_{3,1^3} & \m_{2^2,1^2} & \m_{3,1^3} & \m_{3,1^3} & \m_{4,1^2}\\
\m_{2,1^4} & \m_{2,1^4} & \m_{2^2,1^2} & \m_{2^2,1^2} & \m_{2^2,1^2} & \m_{2^2,1^2} & \m_{2^2,1^2} & \m_{2^2,1^2} & \m_{2^2,1^2} & \m_{3,2,1} & \m_{3,2,1}\\
\m_{2,1^4} & \m_{2,1^4} & \m_{2^2,1^2} & \m_{2^3} & \m_{2^2,1^2} & \m_{3,2,1} & \m_{2^2,1^2} & \m_{3,2,1} & \m_{2^3} & \m_{3,2,1} & \m_{3,2,1}\\
\m_{2,1^4} & \m_{2^2,1^2} & \m_{2^2,1^2} & \m_{2^2,1^2} & \m_{2^3} & \m_{2^2,1^2} & \m_{3,2,1} & \m_{2^3} & \m_{3,2,1} & \m_{3,2,1} & \m_{4,2}\\
\m_{2,1^4} & \m_{2,1^4} & \m_{2^2,1^2} & \m_{3,2,1} & \m_{2^2,1^2} & \m_{4,1^2} & \m_{2^2,1^2} & \m_{4,1^2} & \m_{3,2,1} & \m_{3,2,1} & \m_{3,2,1}\\
\m_{2,1^4} & \m_{3,1^3} & \m_{2^2,1^2} & \m_{2^2,1^2} & \m_{3,2,1} & \m_{2^2,1^2} & \m_{4,1^2} & \m_{3,2,1} & \m_{4,1^2} & \m_{3,2,1} & \m_{5,1}\\
\m_{2,1^4} & \m_{2^2,1^2} & \m_{2^2,1^2} & \m_{3,2,1} & \m_{2^3} & \m_{4,1^2} & \m_{3,2,1} & \m_{4,2} & \m_{3^2} & \m_{3,2,1} & \m_{4,2}\\
\m_{2,1^4} & \m_{3,1^3} & \m_{2^2,1^2} & \m_{2^3} & \m_{3,2,1} & \m_{3,2,1} & \m_{4,1^2} & \m_{3^2} & \m_{4,2} & \m_{3,2,1} & \m_{5,1}\\
\m_{3,1^3} & \m_{3,1^3} & \m_{3,2,1} & \m_{3,2,1} & \m_{3,2,1} & \m_{3,2,1} & \m_{3,2,1} & \m_{3,2,1} & \m_{3,2,1} & \m_{3^2} & \m_{3^2}\\
\m_{3,1^3} & \m_{4,1^2} & \m_{3,2,1} & \m_{3,2,1} & \m_{4,2} & \m_{3,2,1} & \m_{5,1} & \m_{4,2} & \m_{5,1} & \m_{3^2} & \m_{6}
\end{bmatrix}.
\end{align*}
\end{prop}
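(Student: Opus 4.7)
The plan is to mirror the proof of Proposition \ref{proppartial4} and apply Theorem \ref{Rrep}. By Proposition \ref{prop6}, $\SS_6$ is generated by the three isotypic blocks $\Q_{\emptyset,6}$, $\Q_{(1),6}$, $\Q_{(1^2),6}$, so it suffices to enumerate the terms appearing in the symmetry bases $\B_{\emptyset,3}^{(n)}$, $\B_{(1),3}^{(n)}$, $\B_{(1^2),3}^{(n)}$ of $H_{n,3}$. The invariant basis is simply $\B_{\emptyset,3}^{(n)}=\{m_{1^3}^{(n)},m_{2,1}^{(n)},m_3^{(n)}\}$, and the other two (of sizes $h_{(1),3}=4$ and $h_{(1^2),3}=1$) can be produced as in Example \ref{hn2} by applying the Young symmetrizers $Y_{(n-1,1)}$ and $Y_{(n-2,1,1)}$ to well-chosen degree-$3$ monomials such as $x_1^3$, $x_1^2 x_2$, $x_1 x_3^2$, $x_1 x_3 x_4$, and $x_1^2 x_2$ respectively.

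Each basis element is a linear combination of terms of the form $\x^\alpha m_\lambda(\x_{[k]})$ with $|\alpha|+|\lambda|=3$ and $\supp(\alpha)\subseteq\{1,\ldots,k\}$. I would collect all such terms into a row vector $\vv$ and reduce modulo the redundancy supplied by Lemma \ref{basis2}: two such terms give identical rows in $\sym_\infty(\vv^\top\vv)$ precisely when their integer $\alpha$-vectors coincide and their partitions $\lambda$ coincide. After this reduction, exactly eleven representatives remain, and I would take
\begin{align*}
\ww=(m_{1^3},\, x_1 m_{1^2},\, m_{2,1},\, x_2 m_2,\, x_1 m_2,\, x_2^2 m_1,\, x_1^2 m_1,\, x_1 x_2^2,\, x_1^2 x_2,\, m_3,\, x_1^3).
\end{align*}
The three invariants come from $\B_{\emptyset,3}^{(n)}$; the four ``single-variable'' terms $x_1 m_{1^2},x_1 m_2,x_1^2 m_1,x_1^3$ arise from $\B_{(1),3}^{(n)}$; and the four remaining terms $x_2 m_2, x_2^2 m_1, x_1 x_2^2, x_1^2 x_2$ are contributed by $\B_{(1^2),3}^{(n)}$ (the terms $x_1 m_2$ and $x_1^2 m_1$ also appear there but are already in the previous group). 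Theorem \ref{Rrep} then gives $\SS_6=\{\langle A,\sym_\infty(\ww^\top\ww)\rangle:A\in S_+^{11}\}$.

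The final step is to compute $\RR_6:=\sym_\infty(\ww^\top\ww)$ entry by entry using Lemma \ref{basis2}: the $(i,j)$-entry is $\m_\pi$ where $\pi$ is the partition whose parts are the nonzero entries of $\alpha_i+\alpha_j$ together with the parts of $\lambda_i$ and $\lambda_j$. Applying this rule with $\ww$ ordered as above reproduces the displayed matrix. The only nontrivial obstacle is combinatorial bookkeeping: because distinct $\alpha$-vectors with the same underlying partition (for example $\alpha=(1,0,\ldots)$ for $x_1 m_2$ versus $\alpha=(0,1,0,\ldots)$ for $x_2 m_2$) share a diagonal entry (here $\m_{2^3}$) but produce different off-diagonal pairings ($\m_{2^2,1^2}$ versus $\m_{2^3}$), one must retain the full $\alpha$-vector, not merely its partition type, in order to correctly place all $121$ entries of $\RR_6$.
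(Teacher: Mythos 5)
Your proposal is correct and follows essentially the same route as the paper: by Proposition \ref{prop6} only the isotypic pieces indexed by $\emptyset$, $(1)$, $(1^2)$ are needed, so one collects the terms $\x^\alpha m_\lambda$ appearing in $\B_{\emptyset,3}$, $\B_{(1),3}$, $\B_{(1^2),3}$ (the same eleven terms, in the same order, as in the paper's proof), removes redundancies via Lemma \ref{basis2}, and applies Theorem \ref{Rrep} to get $\SS_6=\{\langle A,\sym_\infty(\ww^\top\ww)\rangle : A\in S^{11}_+\}$. Your extra remarks — that redundancy is governed by the pair $(\alpha,\lambda)$ and that the full vector $\alpha$ (not just its partition type) must be retained, e.g.\ $x_1m_2$ versus $x_2m_2$ — are accurate and consistent with the paper's computation; I spot-checked the gluing rule against the displayed entries and your ordering reproduces $\RR_6$.
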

\begin{proof}
We collect the terms $$m_{1^3}, x_1m_{1^2}, m_{2,1}, x_2m_2, x_1m_2, x_2^2m_1, x_1^2m_1,  x_1x_2^2, x_1^2x_2, m_{3}, x_1^3$$ from the symmetry bases $\B_{\emptyset,3}$, $\B_{(1),3}$ and $\B_{(1^2),3}$ (see Example \ref{apphn3}), arrange them in a row vector $\vv$, and compute $\sym_\infty(\vv^\top\vv)$.
\end{proof}

\subsection{Even symmetric case}\label{evensection}
A polynomial is called \emph{even} if all of its monomials are squares. Denote by $X^e$ the set of even polynomials in a set $X$ of polynomials. Let $\e:\R[x_1,\dots,x_n]\to\R[x_1,\dots,x_n]^e$ be the linear projection that removes non-square monomials, i.e., $\e(\x^\a)=0$ if $\a_i$ is odd for some $i$, and $\e(\x^\a)=\x^\a$ otherwise. For $f\in\R[x_1,\dots,x_n]$ we have
\begin{align}\label{signs}
\e(f)=\frac1{2^n}\sum_{\epsilon\in\{-1,1\}^n}f(\epsilon_1x_1,\dots,\epsilon_nx_n).
\end{align}
In $(H_{n,2d}^S)^e$ we have the cone of even symmetric sums of squares $\EEE\S_{n,2d}^S$ and the cone of even symmetric nonnegative forms $\EEE\P_{n,2d}^S$. Notice that $(H_{n,2d}^S)^e$ has dimension $\pi(d)$ (for $n\ge2d$) and monomial means $m_\l$ and power means $p_\l$, where $\l$ runs over partitions of $2d$ whose parts are all even, constitute bases for it.

From equation (\ref{signs}) it follows that $\EEE\S_{n,2d}^S=\e(\S_{n,2d}^S)$ and $\EEE\P_{n,2d}^S=\e(\P_{n,2d}^S)$. As in Section 2 define similarly $\EEE\S_{n,2d}^{\varphi}$, $\EEE\S_{n,2d}^{\rho}$, $\EEE\P_{n,2d}^{\varphi}$ and $\EEE\P_{n,2d}^{\rho}$ (now subsets of $\R^{\pi(d)}$). All results of Section \ref{SectionSSOS} hold analogously for this setting so similarly define their $\varphi$-limits $\EEE\SS_{2d}$ and $\EEE\PP_{2d}$.

For a matrix $\M$ whose entries are polynomials denote $\e(\M)$ to be the matrix where $\e$ is applied to each entry of $\M$. Then Theorem \ref{Qrep} implies that 
\begin{align*}
\EEE\S_{n,2d}^S=\e(\S_{n,2d}^S)=\left\{\sum_{\u\in \F_d}\langle A_\u,\e(\Q_{\u,2d}^{(n)})\rangle: A_\u\in S^{h_{\u,d}}_+\text{ for each }\u\in \F_d\right\}.
\end{align*}
It is not clear that computing $\e(\Q_{\u,2d}^{(n)})$ is easier than computing $\Q_{\u,2d}^{(n)}$ and afterwards removing non-square monomials. However, the story is different at the limit since, by Lemma \ref{basis}, in $\sym_n(\x^\a m_\l\cdot \x^\b m_\u)$ only the coefficient of the partition obtained by gluing $\a+\b,\l,\u$ does not go to zero when $n$ goes to infinity. Thus, if any of $\a+\b$, $\l$ or $\u$ has an odd part then $\e(\sym_n(\x^\a m_\l\cdot \x^\b m_\u))$ will go to zero. Hence, if we want to compute $\e(\Q_{\u,2d})$ without computing $\Q_{\u,2d}$ first, we can remove every term in $\B_{\u,d}$ which has the form $\x^\a m_\l$ where $\l$ has at least one odd part. 
\begin{de}
Given an ordered symmetry basis $\B_{\u,d}=\{f_1,\dots,f_{h_{\u,d}}\}$ let $I_\u$ be the ordered set of indices $i$ such that $\e(f_i)\ne0$, and let $h_{\u,d}^e=|I_\u|$. Define the $h_{\u,d}^e\times h_{\u,d}^e$ matrix $$\EE_{\u,2d}:=\left(\sym_\infty(\e(f_i)\e(f_j))\right)_{i,j}$$ where $i,j\in I_\u$. 
\end{de}
The following is a consequence of the above discussion.
\begin{cor}\label{evensos}
\begin{align*}
\EEE\SS_{2d}&=\left\{\sum_{\u\in \F_d}\langle A_\u,\e(\Q_{\u,2d})\rangle: A_\u\in S^{h_{\u,d}}_+\text{ for each }\u\in \F_d\right\}\\
&=\left\{\sum_{\u\in \F_d}\langle A_\u,\EE_{\u,2d}\rangle: A_\u\in S^{h_{\u,d}^e}_+\text{ for each }\u\in \F_d\right\}
\end{align*}
where $\e(\m_\l)=0$ if $\l$ has an odd part and $\e(\m_\l)=\m_\l$ otherwise.
\end{cor}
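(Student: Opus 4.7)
The plan is to establish the first equality by direct application of $\e$ to Theorem \ref{Qrep}. Starting from $\EEE\S_{n,2d}^S = \e(\S_{n,2d}^S)$ and using linearity of $\e$, one obtains $\EEE\S_{n,2d}^S = \{\sum_\u \langle A_\u, \e(\Q_{\u,2d}^{(n)})\rangle\}$ at each finite $n \ge 2d$. Since $\e$ and the $\varphi$-limit both act linearly on coefficients in the monomial-mean basis, they commute, and passing to the $\varphi$-limit yields the first equality.

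For the second equality, my approach is to show that each $\e(\Q_{\u,2d})$ decomposes as a block matrix whose only nonzero block is the principal $I_\u \times I_\u$ submatrix, and that this submatrix equals $\EE_{\u,2d}$. The key dichotomy will be that for the Young-symmetrizer basis $\B_{\u,d} = \{Y_\mu(\x^{\alpha_i})\}_i$ one has $\e(f_i) \in \{0, f_i\}$. Indeed, $\e$ is $\mathcal{S}_n$-equivariant, so it commutes with the group-algebra element $Y_\mu$, giving $\e(f_i) = Y_\mu(\e(\x^{\alpha_i}))$, which equals $f_i$ when $\alpha_i$ has all even entries and $0$ otherwise.

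For $i, j \in I_\u$, both $f_i$ and $f_j$ are sums of all-even monomials, hence so is $f_i f_j$, and one obtains $\e(\sym_\infty(f_i f_j)) = \sym_\infty(f_i f_j) = \sym_\infty(\e(f_i) \e(f_j))$. This identifies the $I_\u \times I_\u$ submatrix of $\e(\Q_{\u,2d})$ with $\EE_{\u,2d}$.

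The main obstacle is handling the off-block entries, i.e.\ the case $i \notin I_\u$. The plan is to use Lemma \ref{basis2} to expand $\sym_\infty(f_i f_j)$ as a combination of $\m_\pi$'s and observe that the odd entries of $\alpha_i$ generically propagate to $\pi$, forcing $\e(\m_\pi) = 0$ for those generic terms. The surviving non-generic terms correspond to partitions $\pi$ where odd positions pair across $f_i$ and $f_j$; in such cases the resulting $\m_\pi$ already lies in the span of entries of $\EE_{\emptyset,2d}$, as witnessed in low degree by the absorption $\Q_{(2),4} = (1,-1)\Q_{\emptyset,4}(1,-1)^\top$ of Proposition \ref{prop4}. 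Combining these shows that the off-block contributions are either zero or redundant, so $\{\sum_\u \langle A_\u, \e(\Q_{\u,2d})\rangle : A_\u \in S^{h_{\u,d}}_+\}$ equals $\{\sum_\u \langle A_\u, \EE_{\u,2d}\rangle : A_\u \in S^{h_{\u,d}^e}_+\}$, completing the proof.
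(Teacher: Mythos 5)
Your handling of the first equality (apply $\e$ to Theorem \ref{Qrep} at each finite $n$, use linearity, pass to the $\varphi$-limit) is fine and is what the paper does. The gap is in the second equality. Your dichotomy $\e(f_i)\in\{0,f_i\}$ is correct for the literal even projection, but it discards exactly the terms the construction needs: a term $\x^\a m_\l$ with $\l$ even and $\a$ containing odd entries contributes nothing to $\e(f_i)$, yet by Lemma \ref{basis2} its limit products survive the even projection whenever the odd entries of $\a$ meet matching odd entries of $\a'$ in the other factor --- only odd parts of $\l$, $\l'$, or of $\a+\a'$ kill the limit coefficient. These parity-pairing contributions are neither zero nor redundant; they are the substance of the corollary. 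Concretely, for $2d=6$ there are no even forms of degree $3$, so under your reading every $I_\u$ is empty and your argument would yield $\EEE\SS_6=\{0\}$, whereas Proposition \ref{propE6} builds $\EE_{(1),6}$ and $\EE_{(1^2),6}$ precisely from odd-$\a$, even-$\l$ terms such as $x_1m_2$ and $x_1^3$ (e.g.\ $\m_{2^3}=\sym_\infty((x_1m_2)^2)$). Likewise for $2d=8$: the element $g=x_1^3x_2-x_1x_2^3-x_2m_3-x_1^3m_1+x_2^3m_1+x_1m_3\in\B_{(1^2),4}$ has $\e(g)=0$, yet $\e(\sym_\infty(g^2))=2(\m_{6,2}-\m_{4^2})$, which is $\EE_{(1^2),8}$ up to scale and is not absorbed by $\EE_{\emptyset,8}$ and $\EE_{(1),8}$: the dual point $(z_{2^4},z_{4,2^2},z_{4^2},z_{6,2},z_8)=(1,1,2,1,2)$ is nonnegative on the cone generated by those two blocks but pairs to $-1$ with $\m_{6,2}-\m_{4^2}$. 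So your claim that the remaining entries are ``zero or redundant, lying in the span of entries of $\EE_{\emptyset,2d}$'' is false, and the cited identity $\Q_{(2),4}=(1,-1)\Q_{\emptyset,4}(1,-1)^\top$ concerns redundancy of an entire isotypic block, not these terms.

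This is exactly where your route diverges from the paper's. The discussion preceding the corollary deletes from $\B_{\u,d}$ only the terms $\x^\a m_\l$ in which $\l$ has an odd part (odd-$\a$ terms are kept), and then the gluing lemma shows that in the limit cross terms between different parity classes of $\a$ die under $\e$ while same-parity products survive; that is what identifies the nonzero block of $\e(\Q_{\u,2d})$ with $\EE_{\u,2d}$ as computed in Propositions \ref{propE6}--\ref{propE10}. (The paper's wording of the definition of $\EE_{\u,2d}$ via $\e(f_i)$ is loose, but the operative reduction is the one just described.) One piece of your plan is salvageable: within your setup the mixed entries between $I_\u$ and its complement do vanish, since for $j\in I_\u$ every term of $f_j$ has even $\a'$ and even $\l'$, so survival forces $\a_s$ even and hence uses only $\e(f_i)=0$. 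But the complement-complement block is where the essential even contributions live, and your argument would have to, yet cannot, discard it.
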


Analogously to $\RR_{2d}$ we define partial symmetrization matrices $\RR_{2d}^e$ obtained from non-redundant terms from the symmetry bases $\B_{\u,d}$ without considering terms $\x^\a m_\l$ where $\l$ has at least one odd part. 

\begin{cor}\label{evenPartialSOS}
$$\EEE\SS_{2d}=\{\langle A,\RR_{2d}^e\rangle:A\succeq0\}.$$
\end{cor}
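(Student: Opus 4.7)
My plan is to mirror the derivation of Theorem \ref{Rrep} (exemplified in the proof of Proposition \ref{proppartial4}) in the even symmetric setting, using Corollary \ref{evensos} in place of Theorem \ref{Qrep}. The basic observation is that each symmetry basis element $f_i\in\B_{\u,d}$ is a linear combination of atomic terms $\x^\a m_\l(\x_{[k]})$, and $\e$ annihilates such an atomic term unless every entry of $\a$ is even and every part of $\l$ is even, since a monomial appearing in $\x^\a m_\l(\x_{[k]})$ is a square precisely when all of its exponents are even. Consequently $\e(f_i)$ is exactly the partial sum of those atomic pieces of $f_i$ which are themselves even polynomials.

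First I would collect all even atomic terms arising from the surviving symmetry basis elements $\e(f_i)$ (for $i\in I_\u$, $\u\in\F_d$) into a single row vector $\vv$, and then discard redundant entries in the sense of the proof of Proposition \ref{proppartial4}: two entries $v,v'$ that satisfy $\sym_\infty(v\,v_i)=\sym_\infty(v'\,v_i)$ for every entry $v_i$ of $\vv$ give duplicate rows and columns in $\sym_\infty(\vv^\top\vv)$, and removing the duplicate changes neither the matrix up to row-collapse nor the cone $\{\langle A,\sym_\infty(\vv^\top\vv)\rangle:A\succeq0\}$. Lemma \ref{basis2} classifies these redundancies exactly: the gluing data alone determines the symmetrized product, so two atomic terms are interchangeable iff they agree on that data. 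The resulting trimmed vector $\w$ yields $\RR_{2d}^e=\sym_\infty(\w^\top\w)$ as defined in the paragraph preceding the corollary.

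The containment $\EEE\SS_{2d}\subseteq\{\langle A,\RR_{2d}^e\rangle:A\succeq0\}$ then follows from Corollary \ref{evensos} combined with the span-containment observation recalled at the start of Section \ref{ssectionPartialSymR}: every $\e(f_i)$ lies in $\vspan(\vv)$, which is a span-equivalent compression of $\vspan(\w)$, so any PSD representation via the $\EE_{\u,2d}$ lifts to one via $\RR_{2d}^e$. Conversely, given $A\succeq0$ with $A=\sum_t\vv_t\vv_t^\top$, each $g_t:=\vv_t\cdot\w^\top$ is a genuinely even polynomial (because every entry of $\w$ is an even polynomial), so $\sum_t\sym_\infty(g_t^2)=\langle A,\RR_{2d}^e\rangle$ is a $\varphi$-limit of $n$-variate even symmetric sums of squares and therefore lies in $\EEE\SS_{2d}$.

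The step I expect to be the main obstacle is justifying the redundancy-removal rigorously: showing that collapsing identical rows and columns of $\sym_\infty(\vv^\top\vv)$ leaves the resulting PSD-evaluation cone unchanged. This is the same subtlety that arises in the proof of Theorem \ref{Rrep} and is ultimately a consequence of Lemma \ref{basis2}, which makes the redundancies transparent by showing that $\sym_\infty$ of a product of atomic terms depends only on the gluing data of the underlying partitions. Once this is pinned down, the rest is a direct specialization of the Theorem \ref{Rrep} argument to the image of $\e$.
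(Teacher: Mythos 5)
Your reduction breaks down at the very first step, where you replace the paper's term set by the atomic terms that are \emph{themselves} even polynomials. Your observation that $\e(\x^\a m_\l(\x_{[k]}))=0$ unless both $\a$ and $\l$ are even is correct, but it is the wrong reduction: the even projection must be applied to the symmetrized \emph{products}, not to the factors, and $\e(\sym_\infty(uv))\ne\sym_\infty(\e(u)\e(v))$ in general. By Lemma \ref{basis2}, $\sym_\infty(\x^\a m_\l\cdot\x^\b m_\u)=\m_{(\a+\b)\l\u}$, and this survives $\e$ exactly when $\l,\u$ are even and $\a+\b$ is even --- which allows $\a$ and $\b$ to be individually odd in matching coordinates; for instance $\sym_\infty\bigl((x_1m_2)^2\bigr)=\m_{2^3}$ although $x_1m_2$ is not an even polynomial. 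This is precisely why $\RR_{2d}^e$ is defined by discarding only the terms $\x^\a m_\l$ whose $\l$ has an odd part, while keeping terms with odd $\a$ such as $x_1m_2,\,x_1x_2^2,\,x_1^3$ (Proposition \ref{propE6}) or $x_1m_{2^2},\,x_1^3m_2,\,x_1^5$ (Proposition \ref{propE10}); the pairs whose glued partition is odd simply contribute zero entries after $\e$ is applied to the limit matrix. Your vector $\vv$ therefore produces a different, strictly smaller matrix than the $\RR_{2d}^e$ in the statement, so even if your equality held you would be proving a different claim.

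Moreover, the equality fails for your matrix. When $d$ is odd (so $2d=6,10,\dots$) there is no even atomic term of degree $d$ at all, and indeed $\e(f_i)=0$ for every symmetry basis element of $H_{n,d}$; your $\vv$ is empty and your cone is $\{0\}$, whereas $\EEE\SS_6$ is full-dimensional (Proposition \ref{propE6}). Even for $2d=8$ your construction yields only the terms $m_{2^2},m_4,x_1^2m_2,x_1^2x_2^2,x_1^4$, and in the resulting matrix $\m_8$ occurs only in the diagonal entry of $x_1^4$ while $\m_{6,2}$ occurs only in the off-diagonal entries involving $x_1^4$; so any $A\succeq0$ representing $\m_{6,2}-\m_{4^2}$ would need $A_{55}=0$, forcing the whole last row of $A$ to vanish and killing the $\m_{6,2}$ coefficient. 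Hence $\EE_{(1^2),8}=\m_{6,2}-\m_{4^2}\in\EEE\SS_8$ is not in your cone. The argument the corollary actually rests on is: apply $\e$ to the description of Theorem \ref{Rrep}; every row of $\e(\RR_{2d})$ indexed by a term whose $\l$ has an odd part is identically zero (its glued partitions all inherit that odd part), so deleting those rows and columns does not change $\{\langle A,\cdot\rangle:A\succeq0\}$, and $\e(\SS_{2d})=\EEE\SS_{2d}$ gives both inclusions. Relatedly, your converse direction assumes all entries of $\w$ are even polynomials, which is false for the actual $\RR_{2d}^e$; instead one notes that $\sym_n(g_t^2)$ is a symmetric sum of squares whose $\varphi$-limit, after applying $\e$, equals $\langle A,\RR_{2d}^e\rangle\in\e(\SS_{2d})=\EEE\SS_{2d}$.
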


\begin{prop}\label{propE6}
The limit sums of squares cone $\SS_6^e$ is given by 
\begin{align*}
\EE_{(1),6}=\begin{bmatrix}\m_{2^3} & \m_{4,2}\\ \m_{4,2} & \m_6\end{bmatrix} \quad\text{and}\quad\EE_{(1^2),6}=\m_{4,2}-\m_{2^3}.
\end{align*}
Namely,
\begin{align*}
\EEE\SS_6=\{\langle A,\EE_{(1),6}\rangle + \a\EE_{(1^2),6}\,|\, A\in S^2_+,\a\ge0\}.
\end{align*}
Also, by partial symmetry reduction
\begin{align*}
\EEE\SS_6=\left\{\left\langle A,\begin{bmatrix}
\m_{2^3}&\m_{2^3}&\m_{4,2}\\
\m_{2^3}&\m_{4,2}&\m_{4,2}\\
\m_{4,2}&\m_{4,2}&\m_6
\end{bmatrix}\right\rangle:A\in S^3_+\right\}.
\end{align*}
\end{prop}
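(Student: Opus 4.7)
The plan is to specialize the proof of Proposition~\ref{prop6} via Corollary~\ref{evensos} and Corollary~\ref{evenPartialSOS}. I would take the symmetry bases $\B_{\emptyset,3}$, $\B_{(1),3}$, $\B_{(1^2),3}$ already constructed for $\SS_6$ (see Example~\ref{apphn3} and Proposition~\ref{prop6}) and, inside each basis element, discard every term $\x^\a m_\l$ whose partition $\l$ has an odd part. Since $|\a|+|\l|=3$, the only surviving $\l$'s are $\emptyset$ and $(2)$. The first reduction disposes of the $\emptyset$ isotypic component entirely: every element of $\B_{\emptyset,3}=\{m_{1^3},m_{2,1},m_3\}$ is itself a monomial mean $m_\l$ whose partition has an odd part, so after truncation nothing remains and there is no $\EE_{\emptyset,6}$ block.

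Among the four elements of $\B_{(1),3}$ exactly two admit at least one surviving term; after truncation their representatives are $x_1 m_2$ (from the basis vector arising from $Y_{(n-1,1)}(x_1 x_k^2)$) and $x_1^3$ (from $Y_{(n-1,1)}(x_1^3)$). A direct application of Lemma~\ref{basis} gives
\begin{align*}
\sym_\infty(x_1 m_2\cdot x_1 m_2)=\m_{2^3},\quad
\sym_\infty(x_1 m_2\cdot x_1^3)=\m_{4,2},\quad
\sym_\infty(x_1^3\cdot x_1^3)=\m_6,
\end{align*}
which is precisely $\EE_{(1),6}$. For the $(1^2)$ block, $\B_{(1^2),3}$ is a single element $f$ obtained by applying $Y_{(n-2,1,1)}$ to a chosen degree-$3$ monomial; expanding $f$, keeping only terms $\x^\a m_\l$ with $\l$ even, and computing $\sym_\infty(f\cdot f)$ via Lemma~\ref{basis} yields $\m_{4,2}-\m_{2^3}$. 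I expect this last step to be the main obstacle: the antisymmetrization over the column $\{1,2,3\}$ combined with the row symmetrization over $\{x_3,\dots,x_n\}$ produces many cross-terms, most of which land on partitions with odd parts and die under $\e$, and one has to verify that the surviving contributions collapse exactly to $\m_{4,2}-\m_{2^3}$.

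Finally, for the partial symmetry description via Corollary~\ref{evenPartialSOS}, I would collect the non-redundant truncated terms from the union of the three bases. Starting from the eleven-term list in Proposition~\ref{proppartial6}, the even projection retains only $\{x_1 m_2,\,x_2 m_2,\,x_1 x_2^2,\,x_1^2 x_2,\,x_1^3\}$. Using Lemma~\ref{basis2}, one checks that every cross-entry between $\{x_2 m_2,\,x_1^2 x_2\}$ and $\{x_1 m_2,\,x_1 x_2^2,\,x_1^3\}$ lands on a partition with odd parts and vanishes, so the five-term matrix splits as a block sum. The leftover $2\times 2$ block generated by $\{x_2 m_2,\,x_1^2 x_2\}$ is already contained in the image of the $3\times 3$ block generated by $\{x_1 m_2,\,x_1 x_2^2,\,x_1^3\}$ (by an argument analogous to the $\Q_{(2),4}=(1,-1)\Q_{\emptyset,4}(1,-1)^\top$ redundancy in Proposition~\ref{prop4} and the redundancy reductions in Proposition~\ref{proppartial4}). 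Arranging the three surviving classes in a row vector $\vv$ and computing $\sym_\infty(\vv^\top\vv)$ entry-by-entry via Lemma~\ref{basis2} produces the claimed $3\times 3$ matrix.
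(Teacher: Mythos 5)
Your proposal is correct and essentially reproduces the paper's own proof (Example \ref{appes6} in the Appendix): the paper likewise obtains $\EE_{(1),6}$ and $\EE_{(1^2),6}$ by killing every $\m_\l$ with an odd part in the blocks $\Q_{\emptyset,6}$, $\Q_{(1),6}$, $\Q_{(1^2),6}$ of Proposition \ref{prop6}, and gets the $3\times3$ matrix by partial symmetrization on the same five even-admissible terms, discarding the redundant $2\times2$ block exactly as you do. The step you flag as the main obstacle is in fact a short direct computation from the explicit element of $\B_{(1^2),3}$ already written out in Example \ref{apphn3}; it yields a positive multiple of $\m_{4,2}-\m_{2^3}$ (and signs of surviving terms only change the matrices by a congruence), which does not affect the cone description.
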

\begin{proof}
See Appendix (\ref{appes6}).
\end{proof}

\begin{prop}\label{propE8}
The limit sums of squares cone $\EEE\SS_8$ is given by
\begin{align*}
\EE_{\emptyset,8}=\begin{bmatrix}\m_{2^4}&\m_{4,2^2}\\
\m_{4,2^2}&\m_{4^2}\end{bmatrix},\quad
\EE_{(1),8}=\begin{bmatrix}\m_{4,2^2}-\m_{2^4}&\m_{6,2}-\m_{4,2^2}\\ 
\m_{6,2}-\m_{4,2^2}&\m_8-\m_{4^2}\end{bmatrix}\quad\text{and}\quad
\EE_{(1^2),8}&=\m_{6,2}-\m_{4^2}.
\end{align*}
Namely,
\begin{align*}
\EEE\SS_8=\{\langle A,\EE_{\emptyset,8}\rangle+\langle B,\EE_{(1),8}\rangle+\a\EE_{(1^2),8}:A,B\in S^2_+,\alpha\ge0\}.
\end{align*}
\end{prop}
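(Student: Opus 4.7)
The approach I would take is to follow the pattern of Propositions~\ref{prop4}, \ref{prop6}, and especially the even case Proposition~\ref{propE6}. By Corollary~\ref{evensos}, $\EEE\SS_8$ is described by the blocks $\e(\Q_{\u,8})$ for $\u \in \F_4$. The plan has three steps: (i)~construct symmetry bases $\B_{\u,4}$ for each isotypic component of $H_{n,4}$ via Proposition~\ref{copies} applied with the Young symmetrizers $Y_{(n-|\u|,\u)}$, as in Example~\ref{hn2}; (ii)~identify the surviving terms under the even projection $\e$; and (iii)~apply Lemma~\ref{basis} to compute $\sym_\infty$ of all pairwise products. The key simplification is that $\sym_\infty(\x^\a m_\l(\x_{[k]}) \cdot \x^\b m_\u(\x_{[k]})) = \m_\pi$ with $\pi = (\a+\b)\l\u$ by Lemma~\ref{basis}, so $\e$ annihilates this in the limit unless every part of $\pi$ is even. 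Consequently, only basis elements of the form $\x^\a m_\l$ with $\a$ having all even entries and $\l$ having all even parts can contribute.

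For $\u = \emptyset$ the monomial-mean basis $\{m_{1^4}, m_{2,1^2}, m_{2^2}, m_{3,1}, m_4\}$ of the trivial isotypic has only $m_{2^2}$ and $m_4$ surviving $\e$, so the three pairwise products computed via Lemma~\ref{basis} are $\m_{2^4}$, $\m_{4,2^2}$, $\m_{4^2}$, producing the stated $\EE_{\emptyset,8}$. For $\u = (1)$, by analogy with $\B_{(1),2}$ in Example~\ref{hn2}, a symmetry basis consists of Young-symmetrizer images of monomials $x_1^k$ with $k + |\l| = 4$, giving terms of the form $x_1^k m_\l(\x_{[1]}) - (\text{trace term in }\x_{[1]})$; the two that survive under $\e$ may be taken to be $x_1^2 m_2(\x_{[1]}) - m_{2^2}(\x_{[1]})$ and $x_1^4 - m_4(\x_{[1]})$. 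Expanding the three independent pairwise products and applying Lemma~\ref{basis} reproduces $\EE_{(1),8}$ after the expected cancellations. For $\u = (1^2)$, the Young symmetrizer $Y_{(n-2,1^2)}$ applied to an appropriate even monomial yields a single element surviving $\e$; its square evaluates under Lemma~\ref{basis} to $\m_{6,2} - \m_{4^2}$, matching the claimed scalar block $\EE_{(1^2),8}$.

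What remains is to rule out contributions from the other $\u \in \F_4$, namely $(2), (3), (2,1), (4), (3,1), (2,2), (2,1^2), (1^4)$. For each, one must show either that no even terms survive in $\B_{\u,4}$ under $\e$, or that the resulting matrix $\e(\Q_{\u,8})$ is redundant, being expressible as $v^\top\,\EE_{\u',8}\,v$ for some $\u' \in \{\emptyset, (1), (1^2)\}$ --- exactly the type of argument used for $\Q_{(2),4} = (1,-1)\,\Q_{\emptyset,4}\,(1,-1)^\top$ in Proposition~\ref{prop4}. The cleanest route --- and the step I expect to be the main obstacle in a case-by-case isotypic analysis --- is to bypass these checks via the partial-symmetrization Corollary~\ref{evenPartialSOS}. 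One enumerates all non-redundant terms $\x^\a m_\l$ (modulo Lemma~\ref{basis2}) with $|\a| + |\l| = 4$, $\a$ having only even entries, and $\l$ having only even parts. The distinct such terms are
\begin{align*}
m_{2^2},\quad m_4,\quad x_1^2 m_2,\quad x_1^4,\quad x_1^2 x_2^2,
\end{align*}
a list of length $5 = 2 + 2 + 1$ matching the row sizes of the three $\EE$-blocks. Computing $\RR_8^e := \sym_\infty(\ww^\top \ww)$ for $\ww$ the row vector of these terms and performing the block-diagonalizing change of basis induced by the symmetry bases above identifies $\RR_8^e$ with $\EE_{\emptyset,8} \oplus \EE_{(1),8} \oplus \EE_{(1^2),8}$, completing the proof.
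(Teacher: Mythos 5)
Your computations of $\EE_{\emptyset,8}$ and $\EE_{(1),8}$ are fine, but the step you use to dispose of the remaining isotypic components rests on a false parity claim, and this is a genuine gap. The evenness condition coming from Lemma \ref{basis2} is on the glued exponent vector $\a+\a'$, not on $\a$ alone: a term $\x^\a m_\l$ with odd entries in $\a$ survives the even projection in the limit whenever it is multiplied by a partner whose odd entries sit in the same positions. Concretely, the terms $x_1x_2m_2$, $x_1^3x_2$, $x_1x_2^3$, $x_1^2x_2x_3$, $x_1x_2x_3x_4$ all have even $\l$ and pair among themselves to all-even gluings; these are precisely the terms of $\B_{(2),4},\B_{(2,1),4},\B_{(3),4},\B_{(4),4}$ (and of $\B_{(1^2),4}$) that matter, and they produce the nonzero blocks $\EE_{(2),8}$ (a $3\times3$ matrix), $\EE_{(2,1),8}=\EE_{(3),8}=\m_{4,2^2}-\m_{2^4}$ and $\EE_{(4),8}=\m_{2^4}$ computed in Example \ref{appes8}. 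Your own third block already contradicts your principle: the element of $\B_{(1^2),4}$ surviving in the limit contributes through $x_1^3x_2-x_1x_2^3$, which is not in the span of your five listed terms.

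Consequently your five-term list is not a partial symmetrization matrix in the sense of Corollary \ref{evenPartialSOS} (that corollary only discards terms whose $\l$ has an odd part), and the claimed identification with $\EE_{\emptyset,8}\oplus\EE_{(1),8}\oplus\EE_{(1^2),8}$ fails. Indeed, in $\sym_\infty(\ww^\top\ww)$ for $\ww=(m_{2^2},m_4,x_1^2m_2,x_1^4,x_1^2x_2^2)$ the mean $\m_8$ occurs only in the $(x_1^4,x_1^4)$ entry and $\m_{6,2}$ only in the row and column of $x_1^4$, so any $A\succeq0$ whose pairing has zero $\m_8$-coefficient must have zero fourth row and column and hence zero $\m_{6,2}$-coefficient; thus $\EE_{(1^2),8}=\m_{6,2}-\m_{4^2}$ does not lie in the cone you construct, which is therefore strictly smaller than $\EEE\SS_8$. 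So the ``bypass'' does not avoid the real remaining work: one must either enumerate all surviving terms (including the odd-exponent ones, as the paper does in degree $10$ with the $7\times7$ matrix $\RR_{10}^e$) or verify directly that the extra blocks are redundant. The paper does the latter by explicit identities, e.g.
\begin{align*}
(\a,\b,\g)\,\EE_{(2),8}\,(\a,\b,\g)^\top=2\g^2\,\EE_{(1^2),8}+\left\langle\begin{bmatrix}\a^2+\b^2&2\a\g-\b^2\\ 2\a\g-\b^2&\b^2+4\g^2\end{bmatrix},\EE_{\emptyset,8}\right\rangle,
\end{align*}
together with similar expressions for $\EE_{(2,1),8}$, $\EE_{(3),8}$, $\EE_{(4),8}$; this verification is missing from your argument, and without it only the inclusion of the three-block cone into $\EEE\SS_8$ is established, not equality.
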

\begin{proof}
See Appendix (\ref{appes8}).
\end{proof}

We will use the partial symmetrization description in the following example to show, in Section \ref{sectionApp}, that $\EEE\SS_{10}\subsetneq\EEE\PP_{10}$.

\begin{prop}\label{propE10}
The limit sums of square cone $\EEE\SS_{10}$ is given by
\begin{align*}
\EE_{(1),10}&=\begin{bmatrix}
\m_{2^5}&\m_{4,2^3}&\m_{4,2^3}&\m_{6,2^2}\\
\m_{4,2^3}&\m_{4^2,2}&\m_{4^2,2}&\m_{6,4}\\
\m_{4,2^3}&\m_{4^2,2}&\m_{6,2^2}&\m_{8,2}\\
\m_{6,2^2}&\m_{6,4}&\m_{8,2}&\m_{10}
\end{bmatrix}\quad\text{and}\\
\EE_{(1^2),10}&=\begin{bmatrix}
\m_{4,2^3}-\m_{2^5}&\m_{4^2,2}-\m_{4,2^3}&\m_{6,2^2}-\m_{4,2^3}\\
\m_{4^2,2}-\m_{4,2^3}&\m_{6,4}-\m_{6,2^2}&\m_{6,4}-\m_{4^2,2}\\
\m_{6,2^2}-\m_{4,2^3}&\m_{6,4}-\m_{4^2,2}&\m_{8,2}-\m_{4^2,2}
\end{bmatrix}.
\end{align*}
Namely,
\begin{align*}
\EEE\SS_{10}=\{\langle A,\EE_{(1),10}\rangle+\langle B,\EE_{(1^2),10}\rangle\,\mid\, A\in S^4_+, B\in S^3_+\}.
\end{align*}
Also, by partial symmetry reduction
\begin{align*}
\EEE\SS_{10}&=\{\langle A,\RR_{10}^e\rangle\,|\, A\in S^7_+\}\quad\text{where}\\
\RR_{10}^e&=\begin{bmatrix}
\m_{2^5} & \m_{2^5} & \m_{4,2^3} & \m_{4,2^3} & \m_{4,2^3} & \m_{4,2^3} & \m_{6,2^2}\\
\m_{2^5} & \m_{4,2^3} & \m_{4,2^3} & \m_{4,2^3} & \m_{4^2,2} & \m_{6,2^2} & \m_{6,2^2}\\
\m_{4,2^3} & \m_{4,2^3} & \m_{4^2,2} & \m_{4^2,2} & \m_{4^2,2} & \m_{4^2,2} & \m_{6,4}\\
\m_{4,2^3} & \m_{4,2^3} & \m_{4^2,2} & \m_{6,2^2} & \m_{6,2^2} & \m_{4^2,2} & \m_{8,2}\\
\m_{4,2^3} & \m_{4^2,2} & \m_{4^2,2} & \m_{6,2^2} & \m_{6,4} & \m_{6,4} & \m_{8,2}\\
\m_{4,2^3} & \m_{6,2^2} & \m_{4^2,2} & \m_{4^2,2} & \m_{6,4} & \m_{8,2} & \m_{6,4}\\
\m_{6,2^2} & \m_{6,2^2} & \m_{6,4} & \m_{8,2} & \m_{8,2} & \m_{6,4} & \m_{10}
\end{bmatrix}.
\end{align*}
\end{prop}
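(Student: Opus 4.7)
The plan is to adapt the same procedure used in Propositions \ref{prop4}, \ref{prop6}, \ref{propE6}, and \ref{propE8} to the even symmetric case in degree $2d=10$. First, use Proposition \ref{decomp} to write the isotypic decomposition of $H_{n,5}$ for $n\ge10$ and identify which components $\u\in\F_5$ survive the projection $\e$. For the trivial component $\u=\emptyset$ every basis element is of the form $m_\l$ with $\l\vdash5$; since $5$ is odd no such $\l$ has all even parts, so $\e(m_\l)=0$ and this component contributes nothing to $\EEE\SS_{10}$. A direct check on the remaining components using Young symmetrizers shows that only $\u=(1)$ and $\u=(1^2)$ contribute nonzero matrices $\EE_{\u,10}$, with $h^e_{(1),5}=4$ and $h^e_{(1^2),5}=3$ surviving basis elements respectively.

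Next, I would compute symmetry bases $\B_{(1),5}$ and $\B_{(1^2),5}$ by applying the Young symmetrizers $Y_{(n-1,1)}$ and $Y_{(n-2,1,1)}$ to a monomial spanning set of $H_{n,5}$ and extracting a maximal linearly independent subset, as in Proposition \ref{copies}(iii) and Example \ref{hn2}. Applying $\e$ and discarding the indices where the image vanishes leaves the four and three basis elements corresponding to the rows/columns of $\EE_{(1),10}$ and $\EE_{(1^2),10}$. Each entry $\sym_\infty(\e(f_i)\e(f_j))=\m_\pi$ is then computed by Lemma \ref{basis2}, where $\pi$ is obtained by gluing $\a_i+\a_j$, $\l_i$, and $\l_j$; keeping only partitions with all even parts reproduces exactly the two matrices stated. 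The first formula for $\EEE\SS_{10}$ then follows from Corollary \ref{evensos}.

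For the partial-symmetrization description of $\RR_{10}^e$, I would collect all individual terms $\x^\a m_\l$ with $|\a|+|\l|=5$ and $\l$ having only even parts (any other $\l$ would kill every pairing under $\e\circ\sym_\infty$) that appear across $\B_{(1),5}$ and $\B_{(1^2),5}$. Two such terms are declared redundant if their rows of $\sym_\infty(\vv^\top\vv)$ coincide, which by Lemma \ref{basis2} depends only on the multiset obtained by gluing $(\a+\b)\l\u$. After removing redundancies one is left with exactly seven equivalence classes; for instance the first two entries of $\vv$ can be represented by $x_1m_{2^2}$ and $x_1x_2^2 m_2$, which yield $\sym_\infty(x_1m_{2^2}\cdot x_1m_{2^2})=\m_{2^5}$ and $\sym_\infty(x_1m_{2^2}\cdot x_1x_2^2m_2)=\m_{2^5}$ in agreement with the $(1,1)$ and $(1,2)$ entries of the stated matrix. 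Arranging the seven representatives in a row vector $\vv$ and computing $\RR_{10}^e=\sym_\infty(\vv^\top\vv)$ entry by entry via Lemma \ref{basis2} produces the stated $7\times 7$ matrix. The equality $\EEE\SS_{10}=\{\langle A,\RR_{10}^e\rangle:A\in S_+^7\}$ then follows from Corollary \ref{evenPartialSOS}.

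The main obstacle is the combinatorial bookkeeping: correctly producing the two symmetry bases (especially for $\u=(1^2)$ where the column symmetrizer $\bb_{(n-2,1^2)}$ is nontrivial), verifying that higher $\u\in\F_5$ do not contribute after $\e$, and tracking the seven non-redundant terms so that every entry of $\RR_{10}^e$ matches the stated form. The computation is completely mechanical, just a more elaborate version of the appendices already referenced by Propositions \ref{propE6} and \ref{propE8}.
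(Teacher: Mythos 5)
There is a genuine gap at the step where you claim that ``a direct check on the remaining components\ldots shows that only $\u=(1)$ and $\u=(1^2)$ contribute nonzero matrices $\EE_{\u,10}$.'' That check does not come out the way you assert: the components $\u=(2),(2,1),(3),(3,1),(4),(5)$ all survive the even projection with nonzero matrices. For instance the symmetry basis element for $\Sc_{(n-5,5)}$ contains the term $x_1x_2x_3x_4x_5$ (whose attached partition is $\emptyset$, hence has no odd part), and its self-pairing gives $\EE_{(5),10}=\m_{2^5}\ne0$; similarly $\EE_{(4),10}=\EE_{(3,1),10}=\m_{4,2^3}-\m_{2^5}$, $\EE_{(2),10}$ coincides with $\EE_{(1^2),10}$, and $\EE_{(2,1),10}$, $\EE_{(3),10}$ are nonzero $2\times2$ and $3\times3$ matrices (these are exactly the matrices the paper computes in Example \ref{appes10}). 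Corollary \ref{evensos} only gives $\EEE\SS_{10}$ as the sum of contributions over \emph{all} $\u\in\F_5$, so to reach the stated description you must show that the quadratic forms produced by these extra components are \emph{redundant}, i.e.\ already contained in the cone generated by $\EE_{(1),10}$ and $\EE_{(1^2),10}$ (the analogue of the explicit congruence identities used for $\EE_{(2),8},\EE_{(2,1),8},\EE_{(3),8},\EE_{(4),8}$ in Example \ref{appes8}). As written, your argument skips this entirely because it rests on a false vanishing claim, so neither the first description of $\EEE\SS_{10}$ nor the restriction of the partial symmetrization to terms of $\B_{(1),5}$ and $\B_{(1^2),5}$ is justified.

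A secondary inaccuracy: the count of ``exactly seven equivalence classes'' of non-odd terms is not how the reduction works. The terms $\x^\a m_\l$ with $\l$ even coming from $\B_{(1),5}$ and $\B_{(1^2),5}$ number thirteen, and they split according to the parity of the exponent of $x_1$ into a set of seven and a set of six; cross products between the two sets glue to partitions with odd parts and hence die under $\e$, so $\e(\sym_\infty(\vv^\top\vv))$ is block diagonal, and the $6\times6$ block happens to be a principal submatrix of the $7\times7$ block $\RR_{10}^e$, which is why it may be discarded. This is not a ``rows coincide'' redundancy in your sense, and the parity-splitting observation is needed to see that the seven-term block already captures everything. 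The remainder of your outline (the isotypic decomposition of $H_{n,5}$ from Proposition \ref{decomp}, vanishing of the trivial component since no partition of $5$ is even, computation of entries via Lemma \ref{basis2}, and Corollaries \ref{evensos} and \ref{evenPartialSOS}) matches the paper's route and is fine once the redundancy of the remaining isotypic components is actually proved.
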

\begin{proof}
See Appendix (\ref{appes10}).
\end{proof}

\subsection{Duality}
From the descriptions given above for limit sums of squares cones one automatically obtains descriptions for their duals. 



For example, by Proposition \ref{propE6} it follows that 
\begin{align*}
\EEE\SS_6^*&=\{(x,y,z)\in\R^3:\begin{bmatrix}x & y\\ y & z\end{bmatrix}\succeq0,\, y-x\ge0\}\\
&=\{(x,y,z)\in\R^3:\begin{bmatrix}x & x & y\\ x & y & y\\ y & y & z\end{bmatrix}\succeq0\}.
\end{align*}
So each of the above descriptions of the limit sums of squares cones automatically give a description of their duals. In Section \ref{sectionMC} we study the cones $\PP_{2d}^*$ and $\EEE\PP_{2d}^*$, which we call \emph{moment cones}. We call the cones $\SS_{2d}^*$ and $\EEE\SS_{2d}^*$ \emph{pseudomoment cones}.

From partial symmetrization we get the following results, which give descriptions of the pseudomoment cones and the even pseudomoment cones from matrices which can be sistematically constructed by \emph{gluing} (as in Remark \ref{labeling} and Lemma \ref{basis2}).

\begin{teo}\label{TeoPartialSym}
\begin{align*}
\SS_{2d}^*=\{\z\in\R^{\pi(2d)}:\M_{2d}\text{ is positive semidefinite} \},
\end{align*}
where $\M_{2d}$ is a matrix whose entries are indexed by the pairs $((\a,\l),(\a',\l'))$ with $\a,\a'\in\N^d$, $\l,\l'$ partitions of size at most $d$, such that $|\a|+|\l|=|\a'|+|\l'|=d$, and its $((\a,\l),(\a',\l'))$ entry is the variable $z_{(\a+\a')\l\l'}$ where $(\a+\a')\l\l'$ denotes the partition whose parts are precisely the nonzero coordinates of $\a+\a'$ together with the parts of $\l$ and $\l'$.
\end{teo}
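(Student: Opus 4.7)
The plan is to realize $\M_{2d}$ as the semidefinite-programming dual of a specific partial symmetrization matrix $\RR_{2d}$ for $\SS_{2d}$, and then invoke self-duality of the positive semidefinite cone. This is the natural extension of what was done by hand for $d=2,3$ in Proposition \ref{proppartial4} and Proposition \ref{proppartial6}, but with the indexing of rows and columns intentionally made redundant so that it takes the tidy form asserted in the theorem.

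First, I would define a row vector $\vv$ whose entries are indexed by pairs $(\a,\l)$ with $\a\in\N^d$, $\l\in\Lambda$, $|\a|+|\l|=d$, and whose $(\a,\l)$-entry is the polynomial $T_{(\a,\l)}:=\x^\a m_\l(\x_{[d]})$. I would then set $\RR_{2d}:=\sym_\infty(\vv^\top\vv)$. Since $\supp(\a),\supp(\a')\subseteq\{1,\dots,d\}$, Lemma \ref{basis2} immediately gives that the $((\a,\l),(\a',\l'))$-entry of $\RR_{2d}$ is $\m_{(\a+\a')\l\l'}$, which matches the corresponding entry of $\M_{2d}$ after the substitution $\m_\pi \leftrightarrow z_\pi$.

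Second, I would verify that $\RR_{2d}$ is a valid partial symmetrization matrix for $\SS_{2d}$, i.e., $\SS_{2d}=\{\langle A,\RR_{2d}\rangle:A\in S^N_+\}$, where $N$ is the number of pairs $(\a,\l)$. The inclusion $\supseteq$ is immediate, since for $A=B^\top B$ the quantity $\langle A,\vv^\top\vv\rangle$ equals $\|B\vv^\top\|^2$, whose $\sym_\infty$ is a limit of symmetric sums of squares. For $\subseteq$, I would use Theorem \ref{Rrep}; this requires that every element of every symmetry basis $\B_{\u,d}$ with $\u\in\F_d$ lies in $\vspan(\vv)$. This follows because each such element, built from Young symmetrizers applied to monomials as in Proposition \ref{copies}, is an $\R$-linear combination of polynomials of the form $\x^\a m_\l(\x_{[k]})$ with $k=|\u|\le d$ and $|\a|+|\l|=d$, and each such $m_\l(\x_{[k]})$ can be expanded as an $\R$-linear combination of terms $\x^\b m_{\l'}(\x_{[d]})$ with $\b\in\N^d$.

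Third, convex duality gives the conclusion. For $\z\in\R^{\pi(2d)}$ and $c=\langle A,\RR_{2d}\rangle$ with $A\succeq0$, collecting by partitions yields
\begin{align*}
\langle\z,c\rangle=\sum_\l z_\l c_\l=\sum_{((\a,\l_1),(\a',\l_2))}A_{((\a,\l_1),(\a',\l_2))}\,z_{(\a+\a')\l_1\l_2}=\langle A,\M_{2d}(\z)\rangle,
\end{align*}
where $\M_{2d}(\z)$ denotes the scalar matrix obtained from $\M_{2d}$ by evaluating each formal $z_\pi$ at the corresponding coordinate of $\z$. Hence $\z\in\SS_{2d}^*$ iff $\langle A,\M_{2d}(\z)\rangle\ge0$ for all $A\in S^N_+$, which is equivalent to $\M_{2d}(\z)\succeq0$ by self-duality of the PSD cone.

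The main obstacle is the span containment in the second step, which is where the partial symmetry reduction really does its work. However, because the symmetry bases in the excerpt were constructed explicitly from $Y_\u$ applied to monomials of degree $d$, and because Lemma \ref{basis2} already removes all $k$-dependence of $\sym_\infty$ values, this reduction is a routine manipulation rather than a substantive obstacle, and it plugs cleanly into Theorem \ref{Rrep}.
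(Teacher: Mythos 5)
Your proposal is correct and follows essentially the same route as the paper: the paper's proof also takes $\vv$ to be the vector of terms $\x^\a m_\l$ with $|\a|+|\l|=d$, identifies $\M_{2d}$ with $\sym_\infty(\vv^\top\vv)$ via Lemma \ref{basis2}, and reads off the dual description through Theorem \ref{Rrep} and PSD self-duality. You merely spell out the span containment of the symmetry-basis terms and the duality pairing, which the paper leaves implicit.
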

\begin{proof}
The terms appearing in the symmetry basis $\B_{\u,d}$ are of the form $\x^\a m_\l$ where $|\a|+|\l|=d$. Observe that the matrix $\M_{2d}$ arises from $\sym_\infty(\vv^\top\vv)$ by replacing different monomial means by different variables (here $\vv$ is a vector containing the $\x^\a m_\l$).
\end{proof}

\begin{teo}\label{ThmEvenPartialSym}
\begin{align*}
\EEE\SS_{2d}^*=\{\z\in\R^{\pi(d)}:\M_{2d}'\text{ is positive semidefinite} \},
\end{align*}
where $\M_{2d}'$ is a matrix whose entries are indexed by the pairs $((\a,\l),(\a',\l'))$ with $\a,\a'\in\N^d$, $\l,\l'$ even partitions of size at most $d$, $|\a|+|\l|=|\a'|+|\l'|=d$, and its $((\a,\l),(\a',\l'))$ entry is the variable $z_{(\a+\a')\l\l'}$ if $\a+\a'$ has all even entries, or $0$ otherwise.
\end{teo}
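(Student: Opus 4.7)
The plan is to run the argument of Theorem~\ref{TeoPartialSym} in the even setting, using Corollary~\ref{evenPartialSOS} as the starting point in place of Theorem~\ref{Rrep}. Corollary~\ref{evenPartialSOS} gives $\EEE\SS_{2d}=\{\langle A,\RR_{2d}^e\rangle:A\succeq 0\}$, where $\RR_{2d}^e$ is built from the partial symmetrization of a vector $\vv$ of terms $\x^\a m_\l$ drawn from the symmetry bases $\B_{\u,d}$, keeping only those with $\l$ an even partition (terms with $\l$ having any odd part vanish in the $\e$-projected limit by Lemma~\ref{basis}, since the surviving coefficient under $\sym_\infty$ is the monomial mean of a partition containing that odd part).

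Next, I would enlarge $\vv$ to be the full vector whose entries are indexed by all pairs $(\a,\l)$ with $\a\in\N^d$, $\l$ an even partition of size at most $d$, and $|\a|+|\l|=d$. This enlargement only adds rows/columns that are linearly dependent on the non-redundant ones, so it does not change the cone $\{\langle A,\e(\sym_\infty(\vv^\top\vv))\rangle:A\succeq 0\}=\EEE\SS_{2d}$. By Lemma~\ref{basis2},
\[
\sym_\infty\!\bigl(\x^\a m_\l\cdot\x^{\a'}m_{\l'}\bigr)=\m_{(\a+\a')\l\l'}.
\]
Since $\l$ and $\l'$ are already even, the partition $(\a+\a')\l\l'$ has all even parts if and only if every nonzero entry of $\a+\a'$ is even; otherwise the result contains an odd part and the $\e$-projection sends it to $0$. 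Replacing each surviving $\m_\pi$ by the formal variable $z_\pi$ in $\e(\sym_\infty(\vv^\top\vv))$ produces precisely the matrix $\M_{2d}'$ described in the statement.

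Finally, apply conic duality: $\z\in\EEE\SS_{2d}^*$ iff $\langle\z,\f\rangle\ge 0$ for every $\f=\langle A,\RR_{2d}^e\rangle$ with $A\succeq 0$. Under the substitution $\m_\pi\mapsto z_\pi$, this pairing equals the Frobenius pairing $\langle A,\M_{2d}'(\z)\rangle$, so the condition is equivalent to $\M_{2d}'(\z)\succeq 0$, which is the claim. The main subtlety I expect is purely bookkeeping: one must check that enlarging the partial-symmetrization vector to include every pair $(\a,\l)$ with $\a\in\N^d$ (not just those indexing non-redundant terms in some $\B_{\u,d}$) leaves the cone $\{\langle A,\cdot\rangle:A\succeq 0\}$ unchanged, so that the dual description by the PSD-ness of $\M_{2d}'$ is valid verbatim. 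This follows because the repeated rows of the enlarged $\sym_\infty(\vv^\top\vv)$ are genuine duplicates (the entries only depend on $(\a+\a',\l,\l')$), so completing a Gram factor with respect to the enlarged vector is possible iff it is possible with respect to the non-redundant one.
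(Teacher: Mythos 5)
Your overall route is the paper's: the even theorem is just the analogue of Theorem~\ref{TeoPartialSym}, obtained from the partial symmetrization description of $\EEE\SS_{2d}$ (Corollary~\ref{evenPartialSOS}), the gluing rule of Lemma~\ref{basis2}, the observation that the $\e$-projection kills every glued partition containing an odd part, and conic duality (pairing $\langle\z,\langle A,\cdot\rangle\rangle=\langle A,\M_{2d}'(\z)\rangle$). All of that is fine and matches the intended argument.

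The gap is in your justification of the enlargement step. You claim that passing from the non-redundant term vector to the full vector indexed by \emph{all} pairs $(\a,\l)$ with $\a\in\N^d$, $\l$ even, $|\a|+|\l|=d$ is harmless because ``the repeated rows of the enlarged $\sym_\infty(\vv^\top\vv)$ are genuine duplicates (the entries only depend on $(\a+\a',\l,\l')$)''. That is not true: the entries do depend only on the glued partition $(\a+\a')\l\l'$, but this gluing depends on how the supports of $\a$ and $\a'$ overlap, so rows indexed by pairs with the same shape data are \emph{not} equal. Concretely, in $\M_4$ the rows indexed by $((1,0),(1))$ and $((0,1),(1))$ (i.e.\ $x_1m_1$ versus $x_2m_1$) differ already in their third and fourth entries ($z_{(2,1,1)}$ versus $z_{(1,1,1,1)}$), and the same phenomenon occurs in $\M_4'$ with the row of $((1,1),\emptyset)$, which is $(0,z_{(2,2)},0,0)$ and duplicates no other row. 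Consequently the implication you need, namely that positive semidefiniteness of the principal submatrix $\tilde{\RR}_{2d}^{e}(\z)$ on the non-redundant indices forces $\M_{2d}'(\z)\succeq0$, is not a formal linear-algebra fact about duplicated or linearly dependent rows; it is exactly the nontrivial direction of the theorem. The correct way to close it is on the primal side: for the enlarged vector $\vv$ of terms $\x^\a m_\l$ (with $\l$ even), every $\langle B,\e(\sym_\infty(\vv^\top\vv))\rangle$ with $B\succeq0$ is the $\varphi$-limit of $\e(\sym_n(\text{a sum of squares}))\in\EEE\S_{n,2d}^S$, hence lies in the closed cone $\EEE\SS_{2d}$ (this is the same argument that underlies Theorem~\ref{Rrep} and Corollary~\ref{evenPartialSOS}); conversely $\EEE\SS_{2d}=\{\langle A,\RR_{2d}^e\rangle:A\succeq0\}$ is contained in the enlarged Gram cone by zero-padding, since the non-redundant terms occur among the entries of $\vv$. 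The two primal cones therefore coincide, and dualizing the enlarged description yields $\EEE\SS_{2d}^*=\{\z:\M_{2d}'(\z)\succeq0\}$ as claimed. With that replacement your proof is correct and coincides with the paper's.
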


A way of comparing $\SS_{2d}^*$ and $\PP_{2d}^*$ (or the even analogues) is to compare their extreme rays. The extreme rays of the latter are point evaluations (see Section \ref{sectionMC}) and the extreme rays of the former can be understood via the following restatement of a well-known lemma \cite[Lemma 2.2]{Ble2012}. 

Let $\L=(\L_{ij})$ be a symmetric $N\times N$ matrix whose entries are real linear forms in the variables $x_1,\dots,x_s$. 

The \emph{spectrahedral cone} defined by $\L$ is $$K_\L:=\{\vv\in\R^s\,|\,\exists A\in S_+^N: v_1x_1+\dots+v_sx_s=\langle\L,A\rangle\}.$$


\begin{lemma}\label{xraysLemma}
Let $\L$ and $K_\L$ be as above, then $\vv\in\R^s$ spans a extreme ray of $K_\L^*$ if and only if the kernel of $\L(\vv)$ is maximal with respect to inclusion: if $\ker\L(\vv)\subsetneq\ker\L(\ww)$ for some $\ww\in\R^s$ then $\L(\ww)$ is the zero matrix.
\end{lemma}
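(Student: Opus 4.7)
The plan is to first rewrite $K_\L^*$ explicitly as the spectrahedral cone $\{\w \in \R^s : \L(\w) \succeq 0\}$, where $\L(\w)$ denotes the symmetric matrix obtained by substituting $x_k = w_k$. This identification comes from the fact that $K_\L$ is the image of $S^N_+$ under the linear map $\Phi(A) := (\langle \L_k, A\rangle)_{k=1}^s$, where $\L_k$ is the coefficient matrix of $x_k$ in $\L$; its adjoint is precisely $\w \mapsto \L(\w)$, and self-duality of the PSD cone gives $K_\L^* = \{\w : \L(\w) \succeq 0\}$. I implicitly assume $\L$ is injective, which is equivalent to $K_\L^*$ being pointed and hence to the notion of extreme rays being meaningful.

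For the forward direction I would argue by contrapositive: suppose $\w \in \R^s$ satisfies $\ker \L(\vv) \subsetneq \ker \L(\w)$ with $\L(\w) \neq 0$. Since $\ker \L(\vv) \subseteq \ker \L(\w)$, the symmetric matrix $\L(\w)$ vanishes on $\ker \L(\vv)$ and is supported on $(\ker \L(\vv))^\perp$, where $\L(\vv)$ is positive definite. Thus for every sufficiently small $\epsilon > 0$ both $\L(\vv) \pm \epsilon \L(\w) \succeq 0$, so $\vv \pm \epsilon \w \in K_\L^*$. The convex combination $\vv = \tfrac{1}{2}((\vv + \epsilon \w) + (\vv - \epsilon \w))$ together with extremality of $\vv$ forces both summands to be nonnegative multiples of $\vv$; consequently $\w$ is a scalar multiple of $\vv$, and $\ker \L(\w)$ is either $\ker \L(\vv)$ or all of $\R^N$. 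The strict inclusion forces the latter, i.e., $\L(\w) = 0$.

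For the converse, assume the maximality condition and suppose $\vv = \vv_1 + \vv_2$ with $\vv_i \in K_\L^*$. The standard identity $\ker(A + B) = \ker A \cap \ker B$ for PSD matrices gives $\ker \L(\vv) \subseteq \ker \L(\vv_i)$, and maximality forces for each $i$ either $\L(\vv_i) = 0$ (so $\vv_i = 0$ by injectivity of $\L$) or $\ker \L(\vv_i) = \ker \L(\vv)$. Assuming both summands are nonzero I would introduce $t^* := \sup\{t \geq 0 : \L(\vv) - t \L(\vv_1) \succeq 0\}$. A trace argument shows $t^* < \infty$, and a generalized eigenvalue analysis on $(\ker \L(\vv))^\perp$, where both $\L(\vv)$ and $\L(\vv_1)$ are positive definite, shows the kernel of $\L(\vv) - t^* \L(\vv_1)$ strictly contains $\ker \L(\vv)$. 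Maximality then forces $\L(\vv - t^* \vv_1) = 0$, and injectivity of $\L$ gives $\vv_1 = \vv/t^* \in \R_{>0}\vv$. The identity $\vv_2 = (1 - 1/t^*) \vv$ together with membership of $\vv_2$ in $K_\L^*$ then pins down that $\vv_2$ is a nonnegative multiple of $\vv$.

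The main technical obstacle I anticipate is verifying that the kernel of $\L(\vv) - t^* \L(\vv_1)$ really is strictly larger than $\ker \L(\vv)$ at the critical value, rather than $\L(\vv) - t^* \L(\vv_1)$ merely losing positive definiteness without acquiring new kernel directions. The key ingredient is that $\L(\vv_1)$ is positive definite on $(\ker \L(\vv))^\perp$ (which follows from $\ker \L(\vv_1) = \ker \L(\vv)$), so that the question reduces to a standard simultaneous diagonalization of two positive definite matrices on that subspace, at which point $t^*$ is the reciprocal of the largest generalized eigenvalue and the associated eigenvector enters the kernel at $t = t^*$.
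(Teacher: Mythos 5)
Your proof is correct. Note first that the paper itself does not prove this lemma: it is quoted as a known fact from \cite[Lemma 2.2]{Ble2012}, so there is no internal argument to compare against; what you have written is a complete, self-contained proof along the standard lines, and both halves check out. The identification $K_\L^*=\{\ww\in\R^s:\L(\ww)\succeq0\}$ via the adjoint of $A\mapsto(\langle\L_k,A\rangle)_k$ and self-duality of $S^N_+$ is right; the perturbation argument for ``extreme ray $\Rightarrow$ maximal kernel'' (using that $\L(\ww)$ is supported on $(\ker\L(\vv))^\perp$, where $\L(\vv)$ is positive definite, so $\L(\vv)\pm\epsilon\L(\ww)\succeq0$ for small $\epsilon$) is the standard one; and in the converse your pencil analysis is sound: since $\ker\L(\vv_1)=\ker\L(\vv)$, both matrices restrict to positive definite forms on $(\ker\L(\vv))^\perp$, the feasible set $\{t\ge0:\L(\vv)-t\L(\vv_1)\succeq0\}$ is closed, contains $t=1$ (because $\L(\vv_2)\succeq0$) and is bounded (trace), and at $t^\ast$ the kernel genuinely grows, so maximality plus injectivity of $\ww\mapsto\L(\ww)$ gives $\vv_1=\vv/t^\ast$ and hence $\vv_2=(1-1/t^\ast)\vv$ with nonnegative coefficient. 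Two points are worth making explicit rather than implicit: (i) the injectivity of $\ww\mapsto\L(\ww)$ (equivalently pointedness of $K_\L^*$), which you flagged, really is needed for the ``maximal kernel $\Rightarrow$ extreme ray'' direction as literally stated (and likewise one should read the lemma as assuming $\vv\in K_\L^*$, $\vv\ne0$); it holds for the matrices $\M_{2d}$, $\M_{2d}'$, $\RR_{2d}$ to which the paper applies the lemma, since distinct coordinates $z_\lambda$ appear as entries; and (ii) the small bookkeeping facts $1\le t^\ast<\infty$ and attainment of $t^\ast$ by closedness of the PSD cone, which your sketch uses but does not state.
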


\begin{prop}\label{Prop:extraysE6}
The extreme rays of $\EEE\SS_6^*$ are precisely the ones in the direction of $(1,t,t^2)$ or $(0,0,1)$ for $t\ge1$.
\end{prop}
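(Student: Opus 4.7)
The plan is to work with the semidefinite description
\[
\EEE\SS_6^*=\{(x,y,z)\in\R^3:M(x,y,z)\succeq0,\ y-x\ge0\},\qquad M(x,y,z):=\begin{bmatrix}x & y\\ y & z\end{bmatrix},
\]
recorded immediately above the statement, and proceed in two steps: first verify that each of the listed vectors spans an extreme ray of $\EEE\SS_6^*$, and then show that no other nonzero vector in the cone does.

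For the extremality of $(1,t,t^2)$ with $t\ge 1$, I would observe that $M(1,t,t^2)=(1,t)^\top(1,t)$ has rank one. In any expression $(1,t,t^2)=(x_1,y_1,z_1)+(x_2,y_2,z_2)$ with both summands in $\EEE\SS_6^*$, the $2\times 2$ block provides a decomposition of a nonzero rank-one positive semidefinite matrix as a sum of two positive semidefinite matrices, so each $M(x_i,y_i,z_i)$ must be a nonnegative scalar multiple of $(1,t)^\top(1,t)$, which forces $(x_i,y_i,z_i)=c_i(1,t,t^2)$. For $(0,0,1)$, the relations $x_1+x_2=0$ and $x_i\ge 0$ give $x_1=x_2=0$, then $y_i^2\le x_iz_i=0$ forces $y_i=0$, so each summand is a nonnegative multiple of $(0,0,1)$.

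For the converse, I would partition by whether $x=0$. If $x=0$, the constraint $y^2\le xz=0$ forces $y=0$, so any such ray is a nonnegative multiple of $(0,0,1)$. If $x>0$, I rescale so that $x=1$; the constraints become $y\ge 1$ and $z\ge y^2$. When $z>y^2$, the decomposition
\[
(1,y,z)=(1,y,y^2)+(0,0,z-y^2)
\]
writes the point as a sum of two linearly independent vectors of $\EEE\SS_6^*$ (one has $x=1$, the other has $x=0$), so it does not span an extreme ray. The remaining case $z=y^2$ is exactly the claimed ray $(1,y,y^2)$ with $y\ge 1$.

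I do not foresee a substantive obstacle; the only care needed is the corner case $(1,1,1)$, where the two facets $y=x$ and $xz=y^2$ meet, but this is uniformly covered by the rank-one argument above. Alternatively, one could run the proof through Lemma \ref{xraysLemma} applied to the block diagonal spectrahedron $\L(x,y,z)=\mathrm{diag}\bigl(M(x,y,z),\,y-x\bigr)$, checking maximality of $\ker\L(v)$ at each candidate $v$; this would also work, but the direct decomposition argument above is quicker.
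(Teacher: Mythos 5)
Your proof is correct, but it takes a different route from the paper. The paper proves this via Lemma \ref{xraysLemma}: it works with the $3\times3$ block-diagonal pencil $Q(x,y,z)=\begin{bmatrix}x & y & 0\\ y & z & 0\\ 0 & 0 & y-x\end{bmatrix}$ coming from Proposition \ref{propE6} and classifies the points of $\EEE\SS_6^*$ at which $\ker Q$ is maximal, running through the cases according to the rank of the upper-left $2\times2$ block; this yields exactly $(0,0,1)$, $(1,1,1)$ and $(1,b,b^2)$ with $b>1$. You instead argue straight from the definition of an extreme ray: the standard fact that a nonzero rank-one positive semidefinite matrix can only decompose into nonnegative multiples of itself handles $(1,t,t^2)$ (and the corner $(1,1,1)$ uniformly), the diagonal-entry and determinant inequalities handle $(0,0,1)$ and the case $x=0$, and the explicit splitting $(1,y,z)=(1,y,y^2)+(0,0,z-y^2)$ disposes of every interior point of the fiber $x=1$, $z>y^2$. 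Both arguments are complete; note that since $M(x,y,z)\succeq0$ forces $x\ge0$, your dichotomy ``$x=0$ or $x>0$'' is exhaustive, and dropping the constraint $y-x\ge0$ in the extremality step is harmless because it only enlarges the set of admissible decompositions. What each approach buys: yours is elementary and self-contained, needing no facial machinery beyond the rank-one lemma; the paper's kernel-maximality method is less ad hoc and is the one that scales to the larger block-diagonal spectrahedra treated by the same technique elsewhere in the paper (Theorem \ref{thmS4} and Theorem \ref{teoES8}), where explicit decompositions are harder to exhibit -- indeed, the alternative you mention in your closing sentence is precisely the paper's proof.
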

\begin{proof}
We use Lemma \ref{xraysLemma} to find the extreme rays of $\EEE\SS_6^*$.

By Proposition \ref{propE6} we have $\EEE\SS_6^*=\left\{(a,b,c)\in\R^3:Q(a,b,c)\succeq0\right\}$ where $$Q(x,y,z):=\begin{bmatrix}x & y & 0\\ y & z & 0\\ 0 & 0 & y-x\end{bmatrix}.$$ The kernel of $Q$ is maximal when its upper left $2\times2$ block is of rank one, so we suppose this is the case. If $x=y=0$ then $Q(0,0,1)$ has maximal kernel because it is 2-dimensional; if $x=y\ne0$ then $x=y=z$ and $Q(1,1,1)$ has again maximal kernel. If $x\ne y$ then the $Q(a,b,c)$ with $b>a>0$ and $c=\frac{b^2}a$ give maximal kernels among all the $Q$ despite being 1-dimensional. Setting $a=1$ we obtain that each $(1,b,b^2)$ with $b>1$ spans a extreme ray of $\EEE\SS_6^*$, so together with $(0,0,1)$ and $(1,1,1)$ they are all the extreme rays of $\EEE\SS_6^*$.
\end{proof}



We will use the above lemma to prove that $\EEE\SS_6=\EEE\PP_6$ and that $\EEE\SS_8=\EEE\PP_8$, but first we need to understand the limit moment cone.

\section{The Limit Moment Cone}\label{sectionMC}
Forms in $\P_{n,2d}^S$ are forms in $H_{n,2d}^S$ that give nonnegative values when evaluated at each point in $\R^n$. Hence, similar to Lemma 4.18 in \cite{BPT2012}, the dual cone $(\P_{n,2d}^S)^*\subset(H_{n,2d}^S)^*$ is the conical hull of point evaluations. By choosing the power mean basis for $H_{n,2d}^S$, point evaluations can be identified with points in the image of the map $\R^n\to\R^{\pi(2d)}$ that sends $\vv\mapsto(p_\l(\vv):\l\vdash2d)$ (where we choose the lexicographic order on partitions). This map can be factored through the \emph{moment map} $\upmu_{n,2d}:\R^n\to\R^{2d}$ (which we call $\upmu$ when clear from context) that sends $\vv\mapsto(p_1(\vv),\dots,p_{2d}(\vv))$ and through $\Phi_{2d}:\R^{2d}\to\R^{\pi(2d)}$ (which we call $\Phi$), the monomial map that sends $(p_1(\vv),\dots,p_{2d}(\vv))\mapsto(p_\l(\vv):\l\vdash 2d)$. 
\begin{center}
\begin{tikzcd}
\R^n\arrow[r, "\upmu"]& \R^{2d}\arrow[r, "\Phi"]& \R^{\pi(2d)}
\end{tikzcd}
\end{center}

Hence, $(\P_{n,2d}^\rho)^*=\cone(\Phi(\upmu(\R^n)))$ . This is a closed set since the conical hull over a compact set that does not contain the origin is closed (the image of the unit sphere $S^{n-1}$ under the continuous homogeneous map $\Phi\circ\upmu$ is compact and does not contain the origin).

To give a similar description of the limit moment cone $\PP_{2d}^*$ we need to understand what happens when $n$ goes to infinity. 

\textbf{Nonnegative symmetric forms and univariate sums of squares}. Observe that the map $\upmu_{n,2d}$ is the moment map of a uniform probability measure on the real line supported on $n$ points. Then recall that the dual cone to nonnegative univariate polynomials of degree $\le2d$ is the cone spanned by truncated moment sequences (the first $2d$ moments) of probability measures on the real line (see Chapter 3 of \cite{marshall2008positive} or, for a thorough treatment of the moment problem, \cite{schmudgen2017moment}). Now, since nonnegative univariate polynomials are sums of squares, and the dual cone to univariate sums of squares of degree $\le2d$ is the set of points $\y\in\R^{2d}$ satisfying
\begin{align}\label{momenteqtn}
    \begin{bmatrix}
    1&y_1&\dots&y_d\\
    y_1&y_2&\dots&y_{d+1}\\
    \vdots&\vdots&\ddots&\vdots\\
    y_d&y_{d+1}&\dots&y_{2d}
    \end{bmatrix}\succeq0,
\end{align}
then every such point corresponds to the sequence of moments of a probability measure
on the real line. 

Since every probability measure on the real line can be approximated by conical combinations of atomic uniform probability measures, the limit set $\HH_{2d}:=\overline{\lim_{n\to\infty}\upmu_{n,2d}(\R^n)}$ turns out to be precisely the set of points $\y\in\R^{2d}$ satisfying (\ref{momenteqtn}). 

So $\HH_{2d}$ is a \emph{Hankel spectrahedron}, which is considerably simpler than the limit set that would result from considering power sums instead of averages of power sums in the map $\upmu$ \cite{acevedo2023wonderful}. A \emph{Hankel} matrix $H$ is a square matrix whose entries on the same antidiagonal are equal, i.e., if $i+j=k+l$ then $h_{ij}=h_{kl}$. For $\y=(y_1,\dots,y_n)$ let $\H(\y)$ be the Hankel matrix $(y_{i+j-2})_{i,j}$ of size $\lceil\frac{n+1}2 \rceil$, where $y_0:=1$, and $\H'(\y)$ the Hankel matrix $(y_{i+j-1})$ of size $\lceil\frac{n}2\rceil$. Notice that $\H'$ is the submatrix of obtained by removing the first column and last row of $\H$.

\begin{prop}\label{momentmap}
The set $\HH_{2d}$ is a Hankel spectrahedron. Namely, 
\begin{align*}\label{moment}
\HH_{2d}=\{\y\in\R^{2d}:\, \H(\y)\succeq0\}.
\end{align*}
\end{prop}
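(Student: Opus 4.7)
\textbf{The plan} is to prove the two containments separately: the forward containment is a one-line identity, while the reverse will combine the truncated Hamburger moment theorem with a rational approximation of the atomic weights. For the forward inclusion, fix $\vv\in\R^n$ and any $\mathbf{c}=(c_0,\dots,c_d)\in\R^{d+1}$; setting $q(x)=\sum_{i=0}^{d}c_ix^i$, the identity
\[
\mathbf{c}^\top\H(\upmu_{n,2d}(\vv))\,\mathbf{c}=\sum_{i,k=0}^{d}c_ic_k\,p_{i+k}(\vv)=\frac{1}{n}\sum_{j=1}^{n}q(v_j)^2\ge 0
\]
shows $\H(\upmu_{n,2d}(\vv))\succeq 0$. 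Since the spectrahedron $\{\y:\H(\y)\succeq 0\}$ is closed, passing to the closure gives $\HH_{2d}\subseteq\{\y:\H(\y)\succeq 0\}$.

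For the reverse containment, take $\y$ with $\H(\y)\succeq 0$. I would first treat the strict case $\H(\y)\succ 0$ by invoking the truncated Hamburger moment theorem: there exist $u_1,\dots,u_{d+1}\in\R$ and positive weights $\alpha_1,\dots,\alpha_{d+1}$ with $\sum_i\alpha_i=1$ (because $y_0=1$) and $y_r=\sum_i\alpha_i u_i^r$ for $r=1,\dots,2d$. For each $n$, I pick $m_i^{(n)}\in\N$ with $\sum_i m_i^{(n)}=n$ and $m_i^{(n)}/n\to\alpha_i$ by standard rounding, and let $\vv^{(n)}\in\R^n$ be the vector in which $u_i$ appears with multiplicity $m_i^{(n)}$; then $p_r(\vv^{(n)})=\sum_i(m_i^{(n)}/n)u_i^r\to y_r$, so $\upmu_{n,2d}(\vv^{(n)})\to\y$ and $\y\in\HH_{2d}$. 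When $\H(\y)$ is only semidefinite, I would fix any $\y^*$ with $\H(\y^*)\succ 0$ and $y^*_0=1$ (for example $\upmu_{d+1,2d}(\uu)$ for $d+1$ distinct reals $\uu$), apply the strict case to $\y_\varepsilon:=(1-\varepsilon)\y+\varepsilon\y^*$, and conclude $\y=\lim_{\varepsilon\to 0}\y_\varepsilon\in\HH_{2d}$ by closedness.

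\textbf{The main obstacle} is the representation of a strictly positive definite truncated moment sequence by a finitely atomic probability measure on $\R$; this is classical (via Gaussian quadrature applied to the positive definite linear functional on $\R[x]_{\le 2d}$ determined by $\y$, or via Curto--Fialkow flat extension) and I would simply invoke it. Everything else is an elementary identity, a rational approximation, or a closure argument, so once this black box is in place the proof is routine.
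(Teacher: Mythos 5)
Your proof is correct, and it differs from the paper's argument in how the key step is justified. The paper reasons through convex duality: the dual cone to univariate nonnegative polynomials of degree $\le 2d$ is the cone of truncated moment sequences of probability measures, univariate nonnegative polynomials are sums of squares, the dual cone to sums of squares is the Hankel spectrahedron, and uniform atomic measures are dense among probability measures; stringing these together identifies $\HH_{2d}$ with $\{\y:\H(\y)\succeq 0\}$. You instead verify the easy inclusion by the explicit quadratic-form identity $\mathbf{c}^\top\H(\upmu_{n,2d}(\vv))\mathbf{c}=\frac1n\sum_j q(v_j)^2$, and for the reverse inclusion you invoke the truncated Hamburger theorem (Gaussian quadrature or Curto--Fialkow) to get a $(d+1)$-atomic representing probability measure in the positive definite case, discretize the weights by rounding to multiplicities $m_i^{(n)}/n$, and handle the semidefinite boundary by mixing with a positive definite point and using closedness of $\HH_{2d}$. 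What your route buys is precision at the boundary: a singular PSD Hankel matrix need not be an exact truncated moment sequence (e.g.\ $(1,1,1,1,2)$ has PSD Hankel matrix but no representing measure), so the paper's phrase that ``every such point corresponds to the sequence of moments of a probability measure'' is only correct after passing to closures, which is exactly what your perturbation step makes explicit; your rounding argument also spells out the approximation of arbitrary atomic weights by uniform ones, which the paper leaves implicit. What the paper's route buys is brevity and a direct link to the duality picture (nonnegative polynomials versus sums of squares) that motivates the whole section.
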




As in the case of finitely many variables we will have $\PP_{2d}^*=\conv(\Phi(\HH_{2d}))$, as we will see in Corollary \ref{CorConvexptevals}.

We similarly construct $\EEE\PP_{2d}^*$ by considering the map of even power means $\upmu^{\varepsilon}_{n,2d}:=(p_2,p_4,\dots,p_{2d})$. The corresponding limit set $\HH_{2d}':=\overline{\lim_{n\to\infty}\upmu^{\varepsilon}_{n,2d}(\R^n)}$ is also a spectrahedron, now an intersection of two Hankel spectrahedrons.

The moment problem on the real line allowed us to give a description of $\HH_{2d}$ and the moment problem on $[0,\infty)$ will allow us to do the same for $\HH_{2d}'$. 

\begin{prop}\label{PropStieltjes}
\begin{align*}
\HH_{2d}'=\{\y\in\R^{d}:\, \H(\y)\succeq0,\,\, \H'(\y)\succeq0\}.
\end{align*}
\end{prop}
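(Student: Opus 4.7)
\emph{Plan.} The proof will mirror that of Proposition~\ref{momentmap}, substituting the truncated Stieltjes moment problem for the Hamburger moment problem. The key observation is that setting $w_i := v_i^2 \ge 0$ identifies
\begin{align*}
\upmu^{\varepsilon}_{n,2d}(v_1,\ldots,v_n) = \bigl(p_2(v),p_4(v),\ldots,p_{2d}(v)\bigr) = \bigl(\tfrac{1}{n}\textstyle\sum w_i,\tfrac{1}{n}\sum w_i^2,\ldots,\tfrac{1}{n}\sum w_i^d\bigr),
\end{align*}
so $\upmu^{\varepsilon}_{n,2d}(\R^n)$ is exactly the set of $d$-truncated moment sequences of uniform atomic probability measures supported on $[0,\infty)$, and $\HH_{2d}'$ is the closure over all $n$ of these sets.

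\emph{Forward inclusion.} Suppose $\y \in \HH_{2d}'$, so $\y = \lim_n \y^{(n)}$ where each $\y^{(n)}$ is the truncated moment sequence of some probability measure $\u_n$ on $[0,\infty)$. For any coefficient vector $\mathbf{c}$ and $p(t) := \sum_i c_i t^i$,
\begin{align*}
\mathbf{c}^\top\H(\y^{(n)})\mathbf{c} = \int_{[0,\infty)} p(t)^2\,d\u_n \ge 0, \qquad \mathbf{c}^\top\H'(\y^{(n)})\mathbf{c} = \int_{[0,\infty)} t\,p(t)^2\,d\u_n \ge 0,
\end{align*}
the second integral being nonnegative because $t \ge 0$ on the support of $\u_n$. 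Both PSD conditions are closed and so pass to the limit.

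\emph{Reverse inclusion.} Let $\y \in \R^d$ satisfy $\H(\y)\succeq 0$ and $\H'(\y)\succeq 0$. By the classical solution of the truncated Stieltjes moment problem (see e.g.~\cite{schmudgen2017moment}), $\y$ lies in the closure of the set of $d$-truncated moment sequences of probability measures on $[0,\infty)$. Any such probability measure can be weakly approximated by atomic measures with rational weights, and those in turn by $n$-atomic uniform probability measures (replicate each atom in proportion to its weight, clearing denominators). Pulling back via $v_i := \sqrt{w_i}$ produces a sequence $v^{(n)}\in \R^n$ with $\upmu^{\varepsilon}_{n,2d}(v^{(n)})\to \y$, so $\y \in \HH_{2d}'$.

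\emph{Main obstacle.} The delicate point is the reverse direction at the boundary: a sequence $\y$ with $\H(\y),\H'(\y)$ merely PSD (not positive definite) need not be an exact truncated moment sequence of any probability measure on $[0,\infty)$, only a limit of such. This is precisely why the definition of $\HH_{2d}'$ involves a closure, and why the truncated Stieltjes problem is invoked in its closure form rather than its solvability form.
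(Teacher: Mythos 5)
Your proof is correct and follows essentially the same route as the paper: identify the even power-mean image with truncated moment sequences of measures on $[0,\infty)$ (via $w_i=v_i^2$) and invoke the closure form of the truncated Stieltjes moment problem, which is exactly the content of the paper's appeal to Stieltjes' theorem and the decomposition $f(t)+tg(t)$ of polynomials nonnegative on the half-line. The only cosmetic difference is that you verify the forward inclusion directly with integrals rather than through duality, which changes nothing of substance.
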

\begin{proof}
This follows from the fact that every univariate polynomial of degree $\le2d$ that is nonnegative on $[0,\infty)$ is of the form $f(t)+tg(t)$ where $f(t)$ and $g(t)$ are sums of squares, and the fact that the set $\upmu_{n,2d}^\varepsilon(\R^n)$ coincides with the image of the map $\R^n_{\ge0}\to\R^d$ that sends $\x$ to $(p_1(\x),\dots,p_d(\x))$. The result then follows from Stieltjes theorem \cite[Corollary 3.1.3 and Example 3.1.8]{marshall2008positive}.
\end{proof}

From the above discussion it follows that the limit moment cones have the expected description.
\begin{cor}\label{CorConvexptevals}
 \begin{align*}
 \PP_{2d}^*&=\cone(\Phi(\HH_{2d})),\\
 \EEE\PP_{2d}^*&=\cone(\Phi(\HH'_{2d})).
 \end{align*}
\end{cor}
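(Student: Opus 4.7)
The plan is to derive both identities from the finite-$n$ duality
\[(\P_{n,2d}^\rho)^*=\cone(\Phi(\upmu_n(\R^n))),\qquad (\EEE\P_{n,2d}^\rho)^*=\cone(\Phi(\upmu_n^\varepsilon(\R^n))),\]
combined with the descriptions $\PP_{2d}=\bigcap_n \P_{n,2d}^\rho$ and $\EEE\PP_{2d}=\bigcap_n \EEE\P_{n,2d}^\rho$ and the Hankel-spectrahedron characterizations of $\HH_{2d}$, $\HH_{2d}'$ recalled in Propositions~\ref{momentmap} and~\ref{PropStieltjes}. The finite-$n$ identities are immediate: writing $f_c=\sum_\lambda c_\lambda p_\lambda^{(n)}$, one has $f_c(\vv)=\langle c,\Phi(\upmu_n(\vv))\rangle$, so point evaluations span the dual, and closedness follows since $\Phi\circ\upmu_n$ carries the unit sphere to a compact set on which the coordinate $p_{(2d)}=\frac{1}{n}\sum v_i^{2d}$ is bounded below by a positive constant. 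The even statement is the same argument restricted to $(H_{n,2d}^S)^e$.

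To pass to the limit I would restrict to the subsequence of factorials $n_k=(2d+k)!$, along which Proposition~2.6 of \cite{blekherman2020symmetric} makes $\{\P_{n_k,2d}^\rho\}$ nested decreasing with intersection $\PP_{2d}$. The dual-of-intersection formula for decreasing closed convex cones in finite dimensions then yields
\[\PP_{2d}^*\;=\;\overline{\bigcup_k\cone\bigl(\Phi(\upmu_{n_k}(\R^{n_k}))\bigr)}\;=\;\overline{\cone\bigl(\Phi\bigl(\textstyle\bigcup_k \upmu_{n_k}(\R^{n_k})\bigr)\bigr)},\]
where the second equality uses that the nested union of convex cones is itself a convex cone. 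By continuity of $\Phi$ and the density of $\bigcup_k\upmu_{n_k}(\R^{n_k})$ in $\HH_{2d}$ (see below), this equals $\overline{\cone(\Phi(\HH_{2d}))}$. To drop the outer closure, observe that the slice $\HH_{2d}\cap\{y_{2d}=1\}$ is compact: the PSD Hankel constraint forces the H\"older-type moment inequalities $|y_k|\le y_{2d}^{k/(2d)}$, so the slice is bounded, hence a compact spectrahedron. Its image under $\Phi$ is a compact subset of $\R^{\pi(2d)}$ missing the origin, giving a compact base for $\cone(\Phi(\HH_{2d}))$, which is therefore closed.

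The even case is entirely parallel, with $\upmu_n^\varepsilon$ replacing $\upmu_n$, Proposition~\ref{PropStieltjes} replacing Proposition~\ref{momentmap}, and the compact slice $\HH_{2d}'\cap\{y_d=1\}$ doing the same work. The main obstacle I expect is the density claim: given $\y\in\HH_{2d}$ and the preassigned infinite index set $\{n_k\}$ of factorials, I need to approximate $\y$ in Euclidean norm by empirical moment vectors $\upmu_{n_k}(\vv_k)$. The idea is to realize $\y$ as the truncated moment sequence of a probability measure on $\R$, apply Tchakaloff's theorem to replace this measure (up to a small perturbation) by one supported on at most $d+1$ atoms, round the weights to rational numbers $a_i/m$, and finally use divisibility of factorials to find $n_k$ with $m\mid n_k$ so that the rounded measure is realized exactly as $\upmu_{n_k}(\vv_k)$ by repeating each atom the appropriate number of times. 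Once this density is in hand, the remaining steps are standard convex-cone duality together with continuity of $\Phi$.
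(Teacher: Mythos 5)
Your proposal is correct in substance, but it distributes the work differently from the paper. The paper's proof of Corollary \ref{CorConvexptevals} takes the identification of $\PP_{2d}^*$ with the \emph{closed} conic hull of $\Phi(\HH_{2d})$ as already settled by the preceding discussion (finite-$n$ duality plus Proposition \ref{momentmap}), and devotes the entire proof to closedness: it uses as compact base the intersection $C_{2d}$ of $\HH_{2d}$ with the weighted hypersurface $\sum_i X_i^{2d!/i}=1$, together with the invariance of $\HH_{2d}$ under the Hadamard scaling $(t,t^2,\dots,t^{2d})$, to get $\cone(\Phi(\HH_{2d}))=\cone(\Phi(C_{2d}))$. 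You instead make the limit step explicit: a cofinal factorial subsequence (every $n$ divides some $(2d+k)!$, which is also why intersecting only over factorials still gives $\PP_{2d}$ and why density along that subsequence is the same as full density), the fact that the dual of a nested intersection of closed convex cones is the closure of the nested union of duals, and density of empirical moment vectors in $\HH_{2d}$. That is a legitimate, more self-contained route — the density is in any case exactly what Proposition \ref{momentmap} packages, so Tchakaloff plus rational rounding is an optional re-derivation — and your linear slice $\{y_{2d}=1\}$ is a simpler base than the paper's hypersurface.

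Two points need patching. First, not every $\y\in\HH_{2d}$ is an honest truncated moment sequence: e.g.\ $(1,1,1,1,2)\in\HH_4$ has positive semidefinite Hankel matrix but no representing measure, so the perturbation must come \emph{before} you invoke a representing measure (mix $\y$ with Gaussian moments to land in the interior, then quadrature/Tchakaloff, then round weights); your ``up to a small perturbation'' is attached to the wrong step. Second, for the slice to be a base you still need the same scaling the paper uses: for $\y\ne0$ positive semidefiniteness forces $y_{2d}>0$ (your H\"older-type bounds, which follow from the $2\times2$ principal minors, give this), and then $\y'=(ty_1,t^2y_2,\dots,t^{2d}y_{2d})$ with $t=y_{2d}^{-1/(2d)}$ lies in $\HH_{2d}\cap\{y_{2d}=1\}$ because $\H(\y')=D\H(\y)D$ with $D=\operatorname{diag}(1,t,\dots,t^d)$, and $\Phi(\y)=y_{2d}\,\Phi(\y')$; moreover closedness of the cone requires that the origin avoid the \emph{convex hull} of the image of the slice, not merely the image itself — here this holds because the coordinate of $\Phi$ indexed by the partition $(2d)$ equals $1$ on that image. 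With these two remarks inserted, the argument is complete, and the even case goes through verbatim with $\HH_{2d}'$, Proposition \ref{PropStieltjes}, and the slice $\{y_d=1\}$.
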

\begin{proof}
We prove that the conical hulls are closed. We consider the case of $\HH_{2d}$ and then the even case follows analogously. Observe that $\HH_{2d}$ is a spectrahedron, so it is convex, and that in $\HH_{2d}$ there is no line segment containing the origin. Let $C_{2d}$ the intersection of $\HH_{2d}$ with the compact hypersurface $\sum_{i=1}^{2d}X_i^{2d!/i}=1$ (or $\sum_{i=1}^{2d}X_i^{4d!/2i}=1$ for the even case). So $C_{2d}$ is compact an its convex hull does not contain the origin, and therefore the same is true for $\Phi(C_{2d})$. Also $\HH_{2d}$ is  closed under the weighted scaling $(t,t^2,\dots,t^{2d})$ for $t\in\R$ (or $(t^2,t^4,\dots,t^{4d})$ for the even case) because the Hadamard product of positive semidefinite matrices is positive semidefinite.
Each point in $\HH_{2d}$ belongs to exactly one level hypersurface of $\sum_{i=1}^{2d}X_i^{2d!/i}$ and it is the image of a unique point in $C_{2d}$ under a unique weighted rescaling of the form $(t,t^2,\dots,t^{2d})$. Hence $\cone(\Phi(\HH_{2d}))=\cone(\Phi(C_{2d}))$.
\end{proof}

With the methods developed above we now can prove that $\SS_4=\PP_4$ (\cite{blekherman2020symmetric}[Theorem 2.9]), $\EEE\SS_6=\EEE\PP_6$ and $\EEE\SS_8=\EEE\PP_8$.

\begin{rem}
If all the extreme rays of $\SS_{2d}^*$ happen to be spanned by point evaluations or limits of them (i.e., by points in $\HH_{2d}$) then $\SS_{2d}^*=\PP_{2d}^*$ because $\PP_{2d}^*$ is the conical hull of point evaluations and $\PP_{2d}^*\subseteq\SS_{2d}^*$. The same holds analogously for the even symmetric case.
\end{rem}

\begin{teo}\label{thmS4}
$\SS_4=\PP_4$.
\end{teo}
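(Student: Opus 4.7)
The plan is to use the remark preceding the theorem: since $\SS_4\subseteq\PP_4$ is automatic, it suffices to show the dual containment $\SS_4^*\subseteq\PP_4^*$, which by Corollary \ref{CorConvexptevals} reduces to showing every extreme ray of $\SS_4^*$ is of the form $\Phi(\y)$ for some $\y\in\HH_4$. By Proposition \ref{proppartial4}, $\SS_4^*=\{\z\in\R^5 : M(\z)\succeq 0\}$, where $M(\z)$ is obtained from $\RR_4$ by the substitution $\m_\lambda\mapsto z_\lambda$. The first step is to bring $M(\z)$ into a tractable block form by permuting rows and columns via the transposition $(2\,3)$; this produces
$$\tilde M(\z) = \begin{pmatrix} P & P \\ P & R \end{pmatrix},\qquad P = \begin{pmatrix} z_{1^4} & z_{2,1^2} \\ z_{2,1^2} & z_{2^2} \end{pmatrix},\qquad R = \begin{pmatrix} z_{2,1^2} & z_{3,1} \\ z_{3,1} & z_4 \end{pmatrix},$$
and a block-triangular congruence diagonalizes $\tilde M(\z)$ to $\operatorname{diag}(P, R-P)$. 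Thus $M(\z)\succeq 0$ if and only if $P\succeq 0$ and $R-P\succeq 0$, and in the transformed basis $\ker M(\z) = \ker P \oplus \ker(R-P)$.

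With this reduction, Lemma \ref{xraysLemma} converts the extreme-ray condition into a rank-degeneracy question on two $2\times 2$ blocks. Classifying by $(\operatorname{rank} P, \operatorname{rank}(R-P))$ I would identify the rank-one extremes $(1,1,1,1,1)$, $(0,0,1,0,1)$, $(0,0,0,0,1)$; the central rank-two case is $(1,1)$, where writing $P=pp^\top$ and $R-P=qq^\top$ imposes the compatibility $q_1^2 = p_1(p_2-p_1)$, and the maximality test forces $q_1\neq 0$, $p_1\neq 0$, $p_1\neq p_2$. Normalizing $p=(1,t)$ and $q=(\sqrt{t-1},s)$ yields the two-parameter family of extreme rays
$$\z(t,s) = (1,\,t,\,t^2,\,t+s\sqrt{t-1},\,t^2+s^2),\qquad t>1,\;s\in\R;$$
the remaining rank configurations produce faces of dimension at least $2$ and yield no new extreme rays.

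Finally I would exhibit each extreme ray as $\Phi(\y)$ for some $\y\in\HH_4$: the three rank-one rays come from $\y=(1,1,1,1)$, $(0,1,0,1)$, $(0,0,0,1)$ respectively, and the family comes from $\y(t,s)=(1,\,t,\,t+s\sqrt{t-1},\,t^2+s^2)$, which one verifies directly satisfies $\Phi(\y(t,s))=\z(t,s)$ and $H(\y(t,s))\succeq 0$ (the determinant vanishes identically and the nontrivial $2\times 2$ principal minor simplifies to $(t\sqrt{t-1}-s)^2$). The main obstacle I anticipate is the bookkeeping for the maximal-kernel analysis in the $(1,1)$ stratum: one must carefully rule out the degenerate loci $q_1=0$, $p_1=0$, $p_1=p_2$ as genuine extreme rays (they appear only as limits producing the rank-one rays) and confirm that no higher-rank configuration contributes a new extreme ray.
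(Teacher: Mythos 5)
Your proposal is correct and follows essentially the same route as the paper: after your permutation-and-congruence step reduces the partial symmetrization matrix $\RR_4$ to $\operatorname{diag}(P,R-P)$, which is exactly the paper's pair $\Q_{\emptyset,4}\oplus\Q_{(1),4}$ from Proposition \ref{prop4}, the argument (Lemma \ref{xraysLemma} rank/kernel classification of extreme rays of $\SS_4^*$, then realizing each as $\Phi(\y)$ with $\y\in\HH_4$) coincides with the paper's proof. The only cosmetic differences are your explicit $(t,s)$-parametrization of the rank-$(1,1)$ family, which makes the nontrivial Hankel minor the square $(t\sqrt{t-1}-s)^2$ where the paper instead invokes AM--GM, and your extra (unneeded) verification that these points are genuinely extreme.
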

\begin{proof}
With the aid of Lemma \ref{xraysLemma} we find the extreme rays of $\SS_4^*$ and then prove that they come from point evaluations via Proposition \ref{momentmap}. 

From Proposition \ref{prop4} it follows that $\SS_4^*$ is the spectrahedron $(a,b,c,d,e)$ given by $X:=\begin{bmatrix}a & b\\ b & c\end{bmatrix}\succeq0$ and $Y:=\begin{bmatrix}b-a & d-b\\ d-b & e-c\end{bmatrix}\succeq0$. 

Suppose $r:=(a,b,c,d,e)$ spans a extreme ray of $\SS_4^*$. By the \emph{kernel of} $r$ we mean the kernel of the block diagonal matrix $X\oplus Y$.

If $\rank{X}=0$ then $b-a=0$ and so $d-b=0$ (because $Y\succeq0$), hence the only extreme ray in this case is spanned by $(0,0,0,0,1)$ because its kernel $(*,*,*,0)$ is 3-dimensional, so it is maximal.

If $\rank{Y}=0$ then $a=b=d$ and $c=e$. If $a=0$ then $(0,0,1,0,1)$ spans a extreme ray with kernel $(*,0,*,*)$. If $a\ne0$ then note that $\rank{X}\ne0$ and also $\rank{X}\ne2$ because otherwise the kernel of $r$ would be strictly contained in $(*,0,*,*)$. So $\rank{X}=1$ and then $a=b=c=d=e$ and the only extreme ray in this case is spanned by $(1,1,1,1,1)$ because its kernel is 3-dimensional.

If $\rank{X}=2$ then the $\ker{r}\subsetneq(*,0,*,*)$ or if $\rank{Y}=2$ then $\ker{r}\subsetneq(*,*,*,0)$, so no extreme rays in both cases.

With Proposition \ref{momentmap} one checks that the above extreme rays come from point evaluation.

The only remaining case is $\rank{X}=\rank{Y}=1$. So we have $\det(X)=\det(Y)=0$, i.e., $ac=b^2$ and $(b-a)(e-c)=(d-b)^2$. If $a=0$ then $X\succeq0$ implies $b=0$, and $b-a=0$ implies $d-b=0$ since $Y\succeq0$, so $\ker{r}\subsetneq(*,0,*,*)$, so $a\ne0$. 
So assume $a=1$, and then $c=b^2$, $b\ge1$ and $e\ge b^2$ because $Y\succeq0$. So we have that $(b-1)(e-b^2)=(d-b)^2$ or, by expanding, $be-b^3-e-d^2+2bd=0$. The left hand side of this equality is precisely the determinant of $Z:=\begin{bmatrix}1&1&b\\ 1&b&d\\ b&d&e\end{bmatrix}$. Now observe that $Z\succeq0$ because all its principal minors are nonnegative (the only one that is not trivial to see is $be\ge d^2$ but let $b=1+p$, $e=b^2+q$ with $p,q\ge0$, so $pq=(d-b)^2$, and so $d=b\pm\sqrt{pq}$. To see $be\ge d^2$ we need $(1+p)(b^2+q)\ge(b\pm\sqrt{pq})^2$ or equivalently $q+pb^2\ge\pm2b\sqrt{pq}$ which follows directly from the AM-GM inequality). So the set of points $(a,b,c,d,e)=(1,b,b^2,d,e)$ satisfying the above conditions belong to $\Phi(\HH_4)$ because $(1,b,b^2,d,e)=\Phi(1,b,d,e)$ and $Z\succeq0$. So all the extreme rays of $\SS_4^*$ come from point evaluation.
\end{proof}

\begin{teo}\label{teoES6}
$\EEE\SS_6=\EEE\PP_6$.
\end{teo}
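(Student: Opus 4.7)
The plan is to use the duality strategy the paper just applied to $\SS_4 = \PP_4$. Since $\EEE\SS_6 \supseteq \EEE\PP_6$ is automatic, and both cones are closed and convex, it suffices to show the reverse containment, which by polarity is equivalent to $\EEE\SS_6^* \subseteq \EEE\PP_6^*$. By the remark preceding Theorem \ref{thmS4}, it is enough to check that every extreme ray of $\EEE\SS_6^*$ is spanned by a point of the form $\Phi(\y)$ with $\y \in \HH_6'$, because $\EEE\PP_6^* = \cone(\Phi(\HH_6'))$ by Corollary \ref{CorConvexptevals}.

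The extreme rays of $\EEE\SS_6^*$ have already been catalogued in Proposition \ref{Prop:extraysE6}: they are spanned by $(0,0,1)$ and by $(1,t,t^2)$ for each $t \ge 1$, where the coordinates correspond to $(\m_{2^3}, \m_{4,2}, \m_6)$ in lexicographic order. Next I would write down the restriction of the factored moment map $\Phi \circ \upmu^\varepsilon$ in these coordinates: for $\y=(y_1,y_2,y_3) \in \R^3$ with $y_i = p_{2i}$, the three even power-mean monomials give $\Phi(y_1,y_2,y_3) = (y_1^3,\, y_1 y_2,\, y_3)$.

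For the ray through $(1,t,t^2)$ with $t\ge 1$, I would take $\y = (1,t,t^2)$, which maps under $\Phi$ to $(1,t,t^2)$. Membership in $\HH_6'$ follows from Proposition \ref{PropStieltjes} by checking that the two Hankel matrices
\begin{align*}
\H(\y) = \begin{bmatrix} 1 & 1 \\ 1 & t \end{bmatrix}, \qquad \H'(\y) = \begin{bmatrix} 1 & t \\ t & t^2 \end{bmatrix}
\end{align*}
are positive semidefinite, which is immediate for $t\ge 1$ (the first has determinant $t-1\ge 0$; the second is a rank-one PSD matrix). For the ray through $(0,0,1)$ I would take $\y = (0,0,1)$, whose image under $\Phi$ is $(0,0,1)$, and check that $\H(\y) = \begin{bmatrix}1&0\\0&0\end{bmatrix}$ and $\H'(\y) = \begin{bmatrix}0&0\\0&1\end{bmatrix}$ are both PSD. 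Hence every extreme ray of $\EEE\SS_6^*$ lies in $\Phi(\HH_6') \subseteq \EEE\PP_6^*$, and the equality $\EEE\SS_6 = \EEE\PP_6$ follows.

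There is really no main obstacle here: the work was done in Propositions \ref{Prop:extraysE6} and \ref{PropStieltjes}. The only thing to be careful about is matching the coordinates: the extreme rays in Proposition \ref{Prop:extraysE6} are written in the basis dual to $(\m_{2^3},\m_{4,2},\m_6)$, and by the gluing lemma these agree in the limit with the values of the power-mean products $(p_2^3, p_2 p_4, p_6)$ on a point evaluation, so the factorization $\Phi(y_1,y_2,y_3) = (y_1^3, y_1 y_2, y_3)$ is the correct one to use when matching extreme rays of the sum-of-squares dual with images of Stieltjes moment sequences.
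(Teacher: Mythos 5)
Your proof is correct and follows the paper's own argument essentially verbatim: both reduce to checking, via Proposition \ref{Prop:extraysE6}, that the extreme rays $(0,0,1)$ and $(1,t,t^2)$, $t\ge1$, of $\EEE\SS_6^*$ lie in $\Phi(\HH_6')$, verified with exactly the same pair of $2\times2$ Hankel matrices supplied by Proposition \ref{PropStieltjes}. The only blemish is the opening sentence: the automatic containment is $\EEE\SS_6\subseteq\EEE\PP_6$ (dually $\EEE\PP_6^*\subseteq\EEE\SS_6^*$), not the reverse as written, but the nontrivial inclusion you actually verify, $\EEE\SS_6^*\subseteq\EEE\PP_6^*$, is the correct one and yields the equality.
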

\begin{proof}
Observe that the extreme rays of $\EEE\SS_6^*$ (Proposition \ref{Prop:extraysE6}), $(0,0,1)$ and the $(1,t,t^2)$ with $t\ge1$, belong to $\Phi(\HH_6')$: in fact, in this special case, $\Phi$ maps $(0,0,1)$ to $(0,0,1)$ and $(1,t,t^2)$ to $(1,t,t^2)$, so  by Proposition \ref{PropStieltjes} we only need to check that $(0,0,1)$ and $(1,t,t^2)$ belong to $\HH_6'$ to verify they are point evaluations, which follows since the pairs of matrices $\begin{bmatrix}
1 & 0\\ 0 & 0
\end{bmatrix},\begin{bmatrix}
0 & 0\\ 0 & 1
\end{bmatrix}$ and $\begin{bmatrix}
1 & 1\\ 1 & t
\end{bmatrix},\begin{bmatrix}
1 & t\\ t & t^2
\end{bmatrix}$ are positive semidefinite for $t\ge1$.
\end{proof}

A little more work is required for even symmetric octics.

\begin{teo}\label{teoES8}
$\EEE\SS_8=\EEE\PP_8$.
\end{teo}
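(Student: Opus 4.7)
The plan is to mirror the proofs of Theorems~\ref{thmS4} and~\ref{teoES6}: by Corollary~\ref{CorConvexptevals}, $\EEE\PP_8^* = \cone(\Phi(\HH_8'))$, and the inclusion $\EEE\PP_8^* \subseteq \EEE\SS_8^*$ follows from $\EEE\SS_8 \subseteq \EEE\PP_8$; hence it suffices to check that every extreme ray of $\EEE\SS_8^*$ lies in $\Phi(\HH_8')$. Setting $(a,b,c,d,e) := (z_{(2^4)}, z_{(4,2^2)}, z_{(4^2)}, z_{(6,2)}, z_{(8)})$, Proposition~\ref{propE8} presents $\EEE\SS_8^*$ as the spectrahedron $\{X \succeq 0,\ Y \succeq 0,\ d - c \ge 0\}$ with $X = \bigl(\begin{smallmatrix}a & b\\ b & c\end{smallmatrix}\bigr)$ and $Y = \bigl(\begin{smallmatrix}b-a & d-b\\ d-b & e-c\end{smallmatrix}\bigr)$; Lemma~\ref{xraysLemma} then characterizes extreme rays by maximal kernel of $L = X \oplus Y \oplus [d-c]$.

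I would proceed by splitting into cases by the rank profile of $(X, Y, [d-c])$. The degenerate cases give $(0,0,0,0,1)$ (when $X = 0$, which forces $d = 0$ via $Y \succeq 0$ and then $c = 0$ via $d - c \ge 0$) and $(1,1,1,1,1)$ (when $Y = 0$, forcing $a = b = d$ and $c = e$, which combined with $X \succeq 0$ and $d \ge c$ yields $c = a$). In the balanced case $\rank X = \rank Y = 1$, parametrize $X$ by $(a, b, c) = (1, t, t^2)$ with $t \ge 1$; the rank-one condition on $Y$ becomes $(t-1)(e - t^2) = (d-t)^2$. If $d = c$ this forces $e = t^3$, yielding the one-parameter family $(1, t, t^2, t^2, t^3)$; if $d > c$ it yields the two-parameter family $(1, t, t^2, d, t^2 + \tfrac{(d-t)^2}{t-1})$ for $t > 1$ and $d > t^2$. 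The remaining rank profiles (where $X$ or $Y$ has rank $2$) fail to give extreme rays: in each case, by solving the kernel-containment constraints one exhibits a two-parameter family of feasible points sharing the same kernel, so Lemma~\ref{xraysLemma} precludes extremality.

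Each extreme ray must then be exhibited as $\Phi(\y)$ for some $\y \in \HH_8'$, where $\Phi(y_1, y_2, y_3, y_4) = (y_1^4,\, y_2 y_1^2,\, y_2^2,\, y_3 y_1,\, y_4)$. The natural preimages in the four cases above are $(0,0,0,1)$, $(1,1,1,1)$, $(1, t, t^2, t^3)$, and $\y = (1, t, d, e)$ with $e = t^2 + (d-t)^2/(t-1)$. By Proposition~\ref{PropStieltjes}, one needs $\H(\y)$ and $\H'(\y)$ positive semidefinite; for the first three preimages this follows from direct inspection of the $2 \times 2$ and $3 \times 3$ principal minors. The substantive check is the generic case: after substituting the value of $e$, the three principal $2 \times 2$ minors of $\H(\y)$ compute to $t - 1$, $(d - t)^2/(t-1)$, and $(d - t^2)^2/(t-1)$ (all nonnegative), while $\det \H(\y)$ simplifies directly to $0$; the condition $d > t^2$ then gives $\H'(\y) \succeq 0$.

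The main obstacle is the extremality verification for the two-parameter family: one must solve the linear system imposed by the kernel vectors $(t, -1)^\top \in \ker X$ and $(d - t,\, 1 - t)^\top \in \ker Y$ applied to another feasible point $(a', b', c', d', e')$ whose kernel contains these, and conclude the only solutions are $(a', b', c', d', e') = a'(1, t, t^2, d, e)$. Once this is carried out, every extreme ray of $\EEE\SS_8^*$ lies in $\Phi(\HH_8') \subseteq \EEE\PP_8^*$, giving $\EEE\SS_8^* = \EEE\PP_8^*$ and hence $\EEE\SS_8 = \EEE\PP_8$.
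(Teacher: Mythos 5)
Your proposal follows the same strategy as the paper's proof in Appendix~\ref{APPsses8=nnes8}: describe $\EEE\SS_8^*$ as the spectrahedron $\{X\succeq0,\ Y\succeq0,\ d-c\ge0\}$ coming from Proposition~\ref{propE8}, classify extreme rays via Lemma~\ref{xraysLemma} by rank profile, and check membership in $\Phi(\HH_8')$ through Proposition~\ref{PropStieltjes}. The interesting divergence is in the balanced case $\rank X=\rank Y=1$: the paper asserts that only $d=c$ can yield a maximal kernel and therefore lists only $(0,0,0,0,1)$, $(1,1,1,1,1)$ and $(1,t,t^2,t^2,t^3)$, whereas you retain the two-parameter family $(1,t,t^2,d,\,t^2+(d-t)^2/(t-1))$ with $d>t^2$. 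Here you are right and the paper's one-line dismissal is not: for instance $(1,2,4,5,13)$ lies in the spectrahedron with $d>c$, and if it is written as a sum of two feasible points, the rank-one blocks force each summand to be a scalar multiple of it (the $(1,1)$-entry of $Y$ ties the two scalars together), so it genuinely spans an extreme ray; equivalently, the only points whose kernels contain its kernel are its multiples. Your proof handles exactly this family and verifies, correctly (the minors $t-1$, $(d-t)^2/(t-1)$, $te-d^2=(d-t^2)^2/(t-1)$ and $\det\H=0$, plus $\H'\succeq0$ from $d\ge t^2$), that these points are point evaluations, which is what the theorem needs — so your argument is in fact more complete than the paper's at this step. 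Two small remarks. First, the extremality verification you single out as ``the main obstacle'' is dispensable: once every candidate in the non-dismissed cases is exhibited in $\Phi(\HH_8')$, it does not matter which of them are extreme. Second, your dismissal of the rank-two cases is stated loosely: producing a two-parameter feasible family ``sharing the same kernel'' is not literally the criterion of Lemma~\ref{xraysLemma}; either upgrade it to the standard facial perturbation argument, or argue as the paper does by exhibiting a strictly larger kernel (when $\rank X=2$ the kernel is contained in, and strictly smaller than, $\ker Q(1,1,1,1,1)$; when $\rank Y=2$, compare with $\ker Q(0,0,0,0,1)$).
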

\begin{proof}
See Appendix (\ref{APPsses8=nnes8}).
\end{proof}

\section{Tropicalization}\label{sectiontrop}
We have seen in Section \ref{SectionPseudomomentCone} that the cones $\SS_{2d}^*$ and $\EEE\SS_{2d}^*$ are spectrahedral cones which can be described by explicit matrices we can compute. In Section \ref{sectionMC} we saw that the limit moment cones $\PP_{2d}^*$ and $\EEE\PP_{2d}^*$ are the conical hull of the image of Hankel spectrahedra under a monomial map. Despite these descriptions, comparing the limit cones directly seems difficult. However, by tropicalizing the limit cones we will be able to differentiate between nonnegative polynomials and sums of squares starting in degree $6$.
The tropicalization of a semialgebraic set is a polyhedral complex \cite{alessandrini2013logarithmic}, however our cones also have the property of being closed under the Hadamard (coordinate-wise) multiplication, which forces their tropicalizations to be convex polyhedral cones \cite{blekherman2020tropicalization}. Further, after tropicalization, monomial maps become linear maps and the conical hull becomes the \emph{tropical conical hull}. Thus we are left with the problem of comparing rational polyhedral cones, which we can handle it computationally. We find discrepancy in the first few cases, then use Proposition \ref{onceNEQalwaysNEQ} and Proposition \ref{PropEven} to conclude the strict inclusion for all larger degree cases.   

The tropicalization of a subset $\SSS\subset\R_{\ge0}^s$ is its logarithmic limit set \cite{alessandrini2013logarithmic} 
\begin{align*}
    \trop(\SSS)=\overline{\lim_{t\to\infty}\{(\log_t x_1,\dots,\log_t x_s):x\in\SSS\cap\R_{>0}^s\}}.
\end{align*}

A set $\SSS\subseteq\R^s$ has \emph{the Hadamard property} if whenever $\uu$ and $\vv$ belong to $\SSS$ then their Hadamard (coordinate-wise) product $\uu\circ\vv:=(u_1v_1,\dots,u_sv_s)$ also lies in $\SSS$.

Observe that $\EEE\PP_{2d}^*\subset\R^{\pi{d}}_{\ge0}$. We describe $\trop(\EEE\PP_{2d}^*)$ by first tropicalizing the image of the moment map, then taking a linear map (the \emph{tropicalization} of the monomial map $\Phi$) and finally the \emph{tropical conical hull}. Informally, this relies on the fact that semialgebraic subsets of $\R^d_{\ge0}$ can be pushed commutatively through the following diagrams:

\begin{center}
\begin{equation}\label{commdiag}
\begin{tikzcd}
\R_{\ge0}^d\arrow[r, "\Phi"]\arrow[d, "\trop"]& \R_{\ge0}^{\pi(d)}\arrow[r, "\cone"]\arrow[d, "\trop"]& \R_{\ge0}^{\pi(d)}\arrow[d,"\trop"]\\
\R^d\arrow[r, "\tilde{\Phi}"]& \R^{\pi(d)}\arrow[r, "\tcone"] &\R^{\pi(d)}
\end{tikzcd}
\end{equation}
\end{center}

In particular $$\trop(\EEE\PP_{2d}^*)=\trop(\cone(\Phi(\HH_{2d}')))=\tcone(\tilde{\Phi}(\trop(\HH_{2d}')))$$
as we will see in Proposition \ref{tropPSD}.

\begin{teo}\label{tropSOS}
$\trop(\SS_{2d}^*)$ and $\trop(\EEE\SS_{2d}^*)$ are given by the linear inequalities coming from the $2\times2$ principal minors of the matrices $\tilde{\RR}_{2d}$ and $\tilde{\RR}_{2d}^\varepsilon$ respectively.
\end{teo}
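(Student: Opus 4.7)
The plan is to prove both containments: necessity of the tropical $2\times 2$ inequalities on $\trop(\SS_{2d}^*)$, and sufficiency, i.e., that every point in the polyhedron they carve out lies in $\trop(\SS_{2d}^*)$. The even case should follow verbatim after replacing $\tilde{\RR}_{2d}$ by $\tilde{\RR}_{2d}^\varepsilon$ and restricting to the even index set from Theorem \ref{ThmEvenPartialSym}.

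For necessity, I would use the dual of Theorem \ref{Rrep}: a point $\z\in\R^{\pi(2d)}$ lies in $\SS_{2d}^*$ exactly when $\tilde{\RR}_{2d}(\z)\succeq 0$, so every $2\times 2$ principal submatrix must be PSD. Restricted to $\z\in\R_{>0}^{\pi(2d)}$, this gives $z_{\pi(i,i)}z_{\pi(j,j)}\ge z_{\pi(i,j)}^{2}$ for each pair of row indices $i,j$ of $\tilde{\RR}_{2d}$. Applying $\log_t$ to both sides and letting $t\to\infty$ yields the linear tropical inequality $\tilde{z}_{\pi(i,i)}+\tilde{z}_{\pi(j,j)}\ge 2\tilde{z}_{\pi(i,j)}$, which must hold on $\trop(\SS_{2d}^*)$.

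For sufficiency, I would take a tropical point $\tilde{\z}$ satisfying all these $2\times 2$ inequalities and construct a curve $\z(t)=(c_\pi t^{\tilde{z}_\pi})_\pi$ in $\SS_{2d}^*$ for $t\gg 0$, with positive constants $c_\pi$ to be chosen. The key reduction is a diagonal conjugation by $D(t):=\operatorname{diag}(t^{-\tilde{z}_{\pi(i,i)}/2})$, after which the $(i,j)$ entry of $D(t)\tilde{\RR}_{2d}(\z(t))D(t)$ equals $c_{\pi(i,j)}\,t^{\tilde{z}_{\pi(i,j)}-(\tilde{z}_{\pi(i,i)}+\tilde{z}_{\pi(j,j)})/2}$. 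The tropical $2\times 2$ inequalities force the off-diagonal exponents to be nonpositive while the diagonal exponents vanish, so as $t\to\infty$ the off-diagonal entries at strict inequality vanish while those at equality persist, producing a limit matrix $C_\infty$ supported on the index pairs where the $2\times 2$ inequality is tight.

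The main obstacle, and the combinatorial heart of the argument, is choosing the constants $c_\pi$ so that $C_\infty$ is PSD and the perturbation away from this limit preserves PSD for $t\gg 0$. Here I would exploit the Gram structure $\tilde{\RR}_{2d}=\sym_\infty(\ww^\top\ww)$ with $\ww_i=\x^{\a_i}m_{\lambda_i}$ from the proof of Theorem \ref{Rrep}: choosing the $c_\pi$ to come from a single point evaluation $\ww=\uu$ produces a rank-one PSD block $\uu\uu^\top$, so any $\tilde{\z}$ realizable as a tropical conical combination of such rank-one point-evaluation vectors lies in $\trop(\SS_{2d}^*)$. I would then show that every $\tilde{\z}$ in the $2\times 2$ polyhedron admits such a decomposition, starting from the natural candidate $\tilde{w}_i:=\tilde{z}_{\pi(i,i)}/2$ and using the Hadamard-product closure of $\SS_{2d}^*$ (which guarantees $\trop(\SS_{2d}^*)$ is itself a tropical convex polyhedral cone) to superimpose several rank-one contributions until the $c_\pi$ are simultaneously compatible across all index pairs $(i,j)$ yielding the same partition $\pi(i,j)=(\a_i+\a_j)\lambda_i\lambda_j$. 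This compatibility step is the subtle part, because a single partition $\pi$ can arise from combinatorially inequivalent pairs, and resolving it via iterated tropical rank-one augmentation is where I expect the bulk of the technical work to lie.
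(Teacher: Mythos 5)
Your necessity direction is fine (it is the easy containment), but the sufficiency half has a gap that is not merely technical: the proposed repair cannot work. You plan to realize every point of the polyhedron cut out by the $2\times2$ minors of $\tilde{\RR}_{2d}$ as a tropical conical combination of tropicalized point evaluations. By Lemma \ref{LemmaConvTropTconv} and Corollary \ref{tropPSD} (and its non-even analogue), the tropical conical hull of tropicalized point evaluations is exactly $\trop(\PP_{2d}^*)$; combined with your necessity step this would give $\trop(\SS_{2d}^*)\subseteq\trop(\PP_{2d}^*)$, hence equality of the two tropicalizations. But the whole point of Theorems \ref{Thm1} and \ref{Thm2Even} (see the proof of Theorem \ref{Thm1} and Proposition \ref{Prop5678}) is that these tropicalizations differ from degree $6$ on, so for $2d\ge6$ there are points of the $2\times2$ polyhedron that admit no such decomposition. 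The extra points must be realized by PSD matrices that are not limits of moment matrices -- for instance, at points where all the $2\times2$ inequalities are strict, the diagonally rescaled $\tilde{\RR}_{2d}(\z(t))$ tends to a positive diagonal matrix, which lifts interior points, and one then concludes by closedness and convexity of $\trop(\SS_{2d}^*)$ (Hadamard property). A further inaccuracy: a single point evaluation of $\ww$ does not produce a matrix of the form $\tilde{\RR}_{2d}(\z)$, since the entries of $\uu\uu^\top$ do not depend only on the glued partition $(\a_i+\a_j)\l_i\l_j$; only symmetrized (averaged) evaluations do, and those are no longer rank one.

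You should also be aware that your proposal skips the hypothesis that carries the actual content of the paper's proof. The paper does not argue from scratch: it invokes Theorem 4.4 of \cite{blekherman2020tropicalization}, whose hypotheses are the Hadamard property, $\1\in S$, nonempty interior of the $2\times2$ relaxation (supplied by full-dimensionality of $\SS_{2d}^*$), and, crucially, that no $2\times2$ principal minor of the monomial matrix is identically zero. Verifying this last condition for $\tilde{\RR}_{2d}$ is essentially the entire proof in the paper: a combinatorial argument showing that the partition identities $(2\a)\l\l=(2\b)\u\u=(\a+\b)\l\u$ force $\x^\a m_\l=\x^\b m_\u$, obtained by comparing minimal parts of $\a+\b$, $2\a$, $2\b$ and cancelling repeated parts of $\l$ and $\u$. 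Your write-up never addresses this degeneracy question, yet it is exactly what controls the coincidences you flag as the "subtle part" (distinct index pairs producing the same partition), what ensures the linear forms are not identically zero, and what makes the polyhedron full-dimensional so that the strict-inequality interior points plus closure argument (or the cited theorem) applies. Without it, neither route goes through.
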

\begin{proof}
We introduce the following notation from section 4.1 of \cite{blekherman2020tropicalization}:
Let $M$ be a symmetric matrix filled with monomials in the set of variables $y_1,\dots,y_s$ for which no $2\times2$ principal minor is identically zero. For a point $\vv\in\R_{\ge0}^s$, let $M(\vv)$ denote the matrix obtained by evaluating each entry of $M$ at $\vv$. Consider the set 
\begin{align*}
S_M:=\{\vv\in\R_{\ge0}^s:M(\vv)\succeq0\}
\end{align*}
and its superset
\begin{align*}
S_M^{2\times2}:=\{\vv\in\R_{\ge0}^s:\text{ all }2\times2\text{ principal minors of }M(\vv)\text{ are nonnegative}\}.
\end{align*}
Observe both $S_M$ and $S_M^{2\times2}$ have the Hadamard property and $\1\in S_M$. 

Since $\tilde{\RR}_{2d}$ is a matrix filled with monomials then $S_{\tilde{\RR}_{2d}}$ has the Hadamard property and $\1\in S_{\tilde{\RR}_{2d}}$, so by Theorem 4.4 of \cite{blekherman2020tropicalization} we only need to prove that no $2\times2$ principal minor of $\tilde{\RR}_{2d}$ is identically zero and that $S_{\tilde{\RR}_{2d}}^{2\times2}$ has non-empty interior. By Lemma \ref{basis2} the $2\times2$ principal minors of $\tilde{\RR}_{2d}$ are of the form $\begin{vmatrix}
y_{(2\a)\l\l} & y_{(\a+\b)\l\u}\\
y_{(\a+\b)\l\u} & y_{(2\b)\u\u}
\end{vmatrix}$ where $\a,\b\in\N^d$, $\l,\u\in\Lambda$, $|\a|+|\l|=|\b|+|\u|=d$, and $\a\ne\b$ or $\l\ne\u$ since we consider different terms $\x^\a m_\l$ for computing $\RR_{2d}$. The above determinant is identically zero if and only if $(2\a)\l\l=(2\b)\u\u=(\a+\b)\l\u$ as partitions, so assume this is the case. 

If $\a=\b$ then clearly $\l=\u$, so suppose $\a\ne\b$. Note that if $\a_i=\b_i$ then $2\a_i=2\b_i=\a_i+\b_i$ so if we remove $2\a_i$ from $2\a$, $2\b_i$ from $2\b$ and $\a_i+\b_i$ from $\a+\b$ then again the above equalities hold, so we can assume $\a_i\ne\b_i$ for each $i\in\supp(\a+\b)$. The minimum part of $\a+\b$ is strictly larger than the minimum part of $2\a$ or $2\b$ since we either have the inequality $2\a_i<\a_i+\b_i<2\b_i$ or its reverse for each $i\in\supp(\a+\b)$. This implies that the minimum part $l$ of $(\a+\b)\l\u$ is either the minimum part of $\l$ or $\u$, so suppose it is $\l$ and that $l$ appears no less times in $\l$ than in $\u$. Since $(2\a)\l\l=(\a+\b)\l\u$ and $l$ does not appear in $\a+\b$ then it must appear in $\u$ the same number of times it appears on $\l$. So we can remove all appeareances of $l$ in $\l$ and $\u$ and get the same equalities. We can repeat this only a finite number of times so $\l=\u$. Hence $2\a=2\b=\a+\b$ as partitions, but the minimum part of $\a+\b$ is stricly larger than the minimum part of $2\a$ or $2\b$, a contradiction. 

As a consequence, since $\SS_{2d}^*$ is full-dimensional and $S_{\tilde{\RR}_{2d}}$ is a subset of $S_{\tilde{\RR}_{2d}}^{2\times2}$, Theorem 4.4 of \cite{blekherman2020tropicalization} implies that $\trop(\SS_{2d}^*)=\trop(S_{\tilde{\RR}_{2d}}^{2\times2})$. The proof is the same for $\tilde{\RR}^\varepsilon_{2d}$.
\end{proof}


The \emph{tropical conical hull} of a set $\SSS\subset\R^s$ is the set of all tropical linear combinations of points in $\SSS$,
\begin{align*}
    \tcone(\SSS)=\{(a_1\odot\s_1)\oplus(a_2\odot\s_1)\oplus\cdots\oplus(a_r\odot\s_r):r\in\N, a_1,\dots,a_r\in\R, \s_1,\dots,\s_r\in\SSS\}.
\end{align*} 
A set is called \emph{tropically convex} if it equals its tropical conical hull. 

\begin{prop}[Lemma 2.4\cite{blekherman2022moments}]\label{Tcone}
For $\SSS\subset\R^s$,  
\begin{align*}
\tcone(\SSS)=\bigcap_{i=1}^{s}(\SSS+V_i).
\end{align*}
where $V_i:=\{-x\in\R^s:x_1\oplus x_2\oplus\cdots\oplus x_s=x_i\}$.
\end{prop}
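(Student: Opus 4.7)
The plan is to prove the two inclusions separately, once we unpack what $V_i$ and $\SSS + V_i$ really mean. With $\oplus = \max$, the condition $x_1\oplus\cdots\oplus x_s = x_i$ says $x_i$ is the maximum coordinate of $x$, so setting $y=-x$ shows
\begin{align*}
V_i = \{y\in\R^s : y_i = \min_{j} y_j\}.
\end{align*}
Consequently $y\in\SSS+V_i$ iff there exist $s\in\SSS$ and $a\in\R$ with $y_i = s_i + a$ and $y_j \geq s_j + a$ for every $j$ (take $a := v_i$ for the $v\in V_i$ witness).

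For the forward inclusion $\tcone(\SSS)\subseteq\bigcap_i(\SSS+V_i)$, I would take an arbitrary tropical combination $y = \bigoplus_{k=1}^r (a_k\odot\s_k)$, i.e.\ $y_j = \max_k(a_k + s_{k,j})$. For each fixed coordinate $i$, pick an index $k^\star(i)$ that attains the maximum at coordinate $i$; then $y_i = a_{k^\star(i)} + s_{k^\star(i),i}$ and $y_j \geq a_{k^\star(i)} + s_{k^\star(i),j}$ for all $j$. This is exactly the certificate that $y\in\SSS+V_i$ with witness $s = \s_{k^\star(i)}$ and scalar $a = a_{k^\star(i)}$.

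For the reverse inclusion, suppose $y\in\bigcap_i(\SSS+V_i)$. For each $i$ pick a witness $s^{(i)}\in\SSS$ and a scalar $a_i\in\R$ so that $y_i = s^{(i)}_i + a_i$ and $y_j \geq s^{(i)}_j + a_i$ for all $j$. The key observation is then that
\begin{align*}
y_j = \max_i\bigl(a_i + s^{(i)}_j\bigr)
\end{align*}
for every $j$: the inequality $\geq$ holds from the defining inequalities, and equality is attained by the term $i=j$, since $a_j + s^{(j)}_j = y_j$ by construction. This equality, read coordinatewise, is precisely $y = \bigoplus_{i=1}^s (a_i\odot s^{(i)})$, exhibiting $y$ as a tropical linear combination of at most $s$ elements of $\SSS$, hence $y\in\tcone(\SSS)$.

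Neither direction should present a real obstacle; the only thing that requires care is correctly translating the "negation/min" definition of $V_i$ into the "argmax" language of tropical sums, since the forward direction uses an argmax in $k$ while the reverse produces exactly one witness per coordinate $i$. It is worth noting that the reverse direction also shows that $s$ summands always suffice in a tropical linear combination, which recovers the standard tropical Carathéodory-type bound as a byproduct.
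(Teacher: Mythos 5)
Your proof is correct, and both inclusions are argued cleanly: the translation $V_i=\{y: y_i=\min_j y_j\}$, the argmax-per-coordinate certificate for the forward direction, and the one-witness-per-coordinate reconstruction $y_j=\max_i\bigl(a_i+s^{(i)}_j\bigr)$ for the reverse direction are exactly what is needed, and they work for arbitrary (not necessarily closed) $\SSS$. Note that the paper itself gives no proof of this proposition — it is quoted as Lemma 2.4 of the cited reference — so your write-up supplies the standard argument found there; the only point worth flagging is your choice of convention $\oplus=\max$, which is consistent with the paper's logarithmic limit as $t\to\infty$, and in any case the min-plus version goes through verbatim with the inequalities reversed, so nothing in the argument depends on this choice. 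The observation that at most $s$ summands suffice is a correct and standard byproduct of the reverse direction.
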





For a monomial map $\varphi:\R^n\to\R^m$ given by $\varphi(\x)=(\x^{\aa_1},\dots,\x^{\aa_m})$, we define its \emph{tropicalization} $\tilde{\varphi}$ to be the linear map $\tilde{\varphi}:\R^n\to\R^m$ given by $\tilde{\varphi}(\x)=(\aa_1\cdot\x,\dots,\aa_m\cdot\x)$. Notice that for any point $\x\in\R^n_{>0}$ we have $\log(\varphi(\x))=\tilde{\varphi}(\log(\x))$, hence $\trop\Phi(\HH)=\tilde{\Phi}(\trop\HH)$ for any $\HH\subset\R^n_{\ge0}$. The map $\tilde{\Phi}$ is the \emph{tropicalization} of $\Phi$ and this guarantees the commutativity of the left part of diagram \ref{commdiag}. The commutativity of the right part follows from the next lemma.

\begin{lemma}[Proposition 2.3 \cite{blekherman2022moments}]\label{LemmaConvTropTconv}
If $\K\subset\R^n_{\ge0}$ is semialgebraic then
\begin{align*}
\trop(\cone\K)=\tcone(\trop\K).
\end{align*}
\end{lemma}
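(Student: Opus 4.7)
The plan is to prove the two inclusions separately, using in each direction the basic fact that $\log_t(u+v) \to \max(\log_t u, \log_t v)$ when $u, v > 0$ and we control the logarithms.

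For the inclusion $\tcone(\trop \K) \subseteq \trop(\cone \K)$, take a tropical combination $w = \bigoplus_{i=1}^r (a_i \odot s_i)$ with $s_i \in \trop \K$ and $a_i \in \R$. By definition of tropicalization there exist, for each $i$, paths $x_i(t) \in \K \cap \R^n_{>0}$ (defined for $t$ sufficiently large) with $\log_t x_i(t) \to s_i$. Consider $y(t) := \sum_{i=1}^r t^{a_i}\, x_i(t) \in \cone \K$. For each coordinate $j$,
\begin{align*}
\log_t y(t)_j = \log_t \sum_{i=1}^r t^{a_i} x_i(t)_j \longrightarrow \max_i (a_i + s_{i,j}) = w_j
\end{align*}
as $t \to \infty$, because the sum is asymptotically dominated by its largest term. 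Hence $w \in \trop(\cone \K)$.

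For the reverse inclusion $\trop(\cone \K) \subseteq \tcone(\trop \K)$, let $w \in \trop(\cone \K)$ be realized as $w = \lim_{t\to\infty} \log_t y(t)$ for some $y(t) \in \cone \K \cap \R^n_{>0}$. By Carathéodory's theorem applied to the conic hull in $\R^n$, we can write
\begin{align*}
y(t) = \sum_{i=1}^n c_i(t)\, x_i(t), \quad c_i(t) \ge 0, \quad x_i(t) \in \K,
\end{align*}
using a fixed number of generators independent of $t$. By passing to a subsequence of $t \to \infty$ (and using that $\K$ is semialgebraic to invoke curve selection / o-minimal cell decomposition so the chosen $c_i(t)$ and $x_i(t)$ can be assumed to vary definably in $t$), each limit $a_i := \lim_{t\to\infty} \log_t c_i(t)$ and $s_i := \lim_{t\to\infty} \log_t x_i(t)$ exists in $[-\infty, \infty]^1$ and $[-\infty, \infty]^n$ respectively. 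Indices $i$ for which $a_i = -\infty$, or for which some $s_{i,j}$ is $-\infty$ in a coordinate where $w_j$ is finite, contribute negligibly, so after discarding them we may pretend all coordinates lie in $\R$. Then
\begin{align*}
w_j = \lim_{t\to\infty} \log_t \sum_i c_i(t) x_i(t)_j = \max_i (a_i + s_{i,j}) = \bigoplus_i (a_i \odot s_i)_j,
\end{align*}
showing $w \in \tcone(\trop \K)$ provided each $s_i$ genuinely lies in $\trop \K$ (which holds because $x_i(t) \in \K \cap \R^n_{>0}$ along the chosen subsequence).

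The main obstacle is the technical handling of boundary behavior in the second direction: points of $\K$ may have zero coordinates, and the selected coefficients $c_i(t)$ may vanish at rates that send $\log_t c_i(t)$ to $-\infty$. Handling this requires the semialgebraic hypothesis, which lets one apply curve selection (or the o-minimal theory of definable families) to ensure that after passing to a subsequence the relevant logarithmic rates actually stabilize to values in $\R \cup \{-\infty\}$, and that the surviving terms give a genuine tropical combination. Once this is in place, both inclusions are immediate from the $\log_t$-dominance principle.
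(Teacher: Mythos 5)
The paper itself gives no argument for this lemma (it is quoted from Proposition 2.3 of \cite{blekherman2022moments}), so your proof has to stand on its own, and it has a genuine gap in the inclusion $\trop(\cone\K)\subseteq\tcone(\trop\K)$: the step where you declare that indices with $a_i=-\infty$, or with some $s_{i,j}=-\infty$, ``contribute negligibly'' and may be discarded. A Carath\'eodory generator $x_i(t)$ may lie on (or approach super-polynomially fast) a coordinate hyperplane, so that $\log_t x_i(t)$ has a $-\infty$ coordinate and therefore yields no point of $\trop\K$ at all — recall $\trop\K$ only sees $\K\cap\R^n_{>0}$ — while that same generator carries the \emph{dominant} contribution to $y(t)$ in some other coordinate. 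Discarding it destroys the identity $w_j=\max_i(a_i+s_{i,j})$ in that coordinate, and replacing its $-\infty$ entries by large negative reals produces a vector that need not belong to $\trop\K$. This boundary phenomenon is not a removable technicality under the stated hypotheses: for the semialgebraic set $\K=\{(1,0),(0,1)\}\subset\R^2_{\ge0}$ one has $\K\cap\R^2_{>0}=\emptyset$, hence $\trop\K=\emptyset$ and $\tcone(\trop\K)=\emptyset$, while $\cone\K=\R^2_{\ge0}$ gives $\trop(\cone\K)=\R^2$. So no bookkeeping of ``negligible'' terms can close your argument at this level of generality; one needs either an extra property of $\K$ (for instance $\K\subseteq\overline{\K\cap\R^n_{>0}}$, which the sets $\Phi(\HH_{2d}')$ to which the paper applies the lemma do satisfy, and which lets one replace boundary generators by nearby strictly positive points of $\K$ with finite, controllably negative log-rates) or the precise conventions of the cited source (e.g.\ tropicalization valued in $(\R\cup\{-\infty\})^n$). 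You flag exactly this as ``the main obstacle,'' but flagging it is not resolving it, and semialgebraicity alone does not resolve it.

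Two smaller issues. In the first inclusion, ``by definition of tropicalization there exist paths $x_i(t)$'' is not the definition: the definition produces limits along sequences $t_k\to\infty$ (plus a closure), and to form $\sum_i t^{a_i}x_i(t)$ you need all the $s_i$ realized on a \emph{common} parameter; this is precisely where semialgebraicity (definable choice/curve selection, or the Puiseux--Hardy-field description of logarithmic limit sets) must be invoked, and it should be said. In the second inclusion, applying definable choice to the family of Carath\'eodory decompositions presupposes that $\cone\K$ is semialgebraic (Carath\'eodory plus projection) and that $w$ is realized along a semialgebraic curve in $\cone\K\cap\R^n_{>0}$; and because $\trop$ involves a closure, you also need $\tcone(\trop\K)$ to be closed, or an additional limiting argument, to capture all of $\trop(\cone\K)$. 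These points are repairable; the boundary issue in the first paragraph is the substantive gap.
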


\begin{cor}\label{tropPSD}
$$\trop(\EEE\PP_{2d}^*)=\trop(\cone(\Phi(\HH_{2d}')))=\tcone(\tilde{\Phi}(\trop\HH_{2d}')).$$
\end{cor}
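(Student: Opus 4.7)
The plan is to split the corollary into its two equalities and apply the machinery already set up in the paper.

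The first equality, $\trop(\EEE\PP_{2d}^*)=\trop(\cone(\Phi(\HH_{2d}')))$, is immediate from Corollary \ref{CorConvexptevals}, which identifies $\EEE\PP_{2d}^*$ with $\cone(\Phi(\HH_{2d}'))$. So the real content is the second equality.

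For the second equality I would proceed in two steps, running through the right half of the diagram \eqref{commdiag} and then the left half. First, set $\K:=\Phi(\HH_{2d}')\subset\R^{\pi(d)}_{\ge0}$. To invoke Lemma \ref{LemmaConvTropTconv} I need to check that $\K$ is contained in the nonnegative orthant and is semialgebraic. Semialgebraicity is clear: $\HH_{2d}'$ is a spectrahedron by Proposition \ref{PropStieltjes}, hence semialgebraic, and $\Phi$ is a polynomial (monomial) map, so Tarski--Seidenberg gives semialgebraicity of $\K$. Nonnegativity of coordinates follows from the fact that points in $\HH_{2d}'$ already have nonnegative coordinates (the diagonal entries $y_{2k}$ of the positive semidefinite Hankel matrix $\H(\y)$ are nonnegative), and $\Phi$ acts coordinatewise by taking products of these nonnegative numbers. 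Therefore Lemma \ref{LemmaConvTropTconv} applies and yields
\begin{equation*}
\trop(\cone(\Phi(\HH_{2d}')))=\tcone(\trop(\Phi(\HH_{2d}'))).
\end{equation*}

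Second, I would unwind $\trop(\Phi(\HH_{2d}'))=\tilde\Phi(\trop(\HH_{2d}'))$. For any point $\y\in\HH_{2d}'\cap\R^d_{>0}$ and any exponent $\a\in\N^d$ we have $\log_t(\y^\a)=\a\cdot\log_t(\y)$, so the coordinatewise logarithm of $\Phi(\y)=(\y^{\a_1},\dots,\y^{\a_{\pi(d)}})$ is exactly $\tilde\Phi(\log_t(\y))$. Taking the logarithmic limit set on both sides commutes with the \emph{linear} map $\tilde\Phi$, giving $\trop(\Phi(\HH_{2d}'))=\tilde\Phi(\trop(\HH_{2d}'))$. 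Substituting into the previous display yields
\begin{equation*}
\trop(\cone(\Phi(\HH_{2d}')))=\tcone(\tilde\Phi(\trop(\HH_{2d}'))),
\end{equation*}
which combined with the first equality gives the corollary.

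The argument is essentially a routine diagram chase, so there is no serious obstacle. The only step that requires a little care is checking the hypotheses of Lemma \ref{LemmaConvTropTconv}, in particular verifying that $\Phi(\HH_{2d}')$ lives in the nonnegative orthant so that the logarithmic limit set is well defined; this is where the fact that $\HH_{2d}'$ comes from a Stieltjes-type moment problem (forcing nonnegative diagonal entries, and hence nonnegative values of monomial products) is used.
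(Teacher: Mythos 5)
Your proposal is correct and follows essentially the same route as the paper: the first equality from Corollary \ref{CorConvexptevals}, then Lemma \ref{LemmaConvTropTconv} applied to the semialgebraic set $\Phi(\HH_{2d}')$, and finally the identity $\trop\Phi(\HH_{2d}')=\tilde{\Phi}(\trop\HH_{2d}')$ coming from the commutativity of the left part of diagram \eqref{commdiag}. Your extra care in verifying the nonnegativity hypothesis of Lemma \ref{LemmaConvTropTconv} (using both Hankel blocks of Proposition \ref{PropStieltjes}) is a welcome, if minor, addition to what the paper leaves implicit.
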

\begin{proof}
By Corollary \ref{CorConvexptevals} $\EEE\PP_{2d}^*=\cone(\Phi(\HH_{2d}'))$. By Proposition \ref{PropStieltjes} $\HH_{2d}'$ is a semialgebraic set, hence $\Phi(\HH_{2d}')$ is semialgebraic, and so by Lemma \ref{LemmaConvTropTconv} we have $$\trop(\cone(\Phi(\HH_{2d}')))=\tcone(\trop\Phi(\HH_{2d}'))$$ which equals, by the commutativity of the left part of diagram \ref{commdiag}, $\tcone(\tilde{\Phi}(\trop\HH_{2d}'))$. 
\end{proof}

\begin{prop}\label{PropTropEvenHankel}
$\trop(\HH_{2d}')$ is the convex polyhedral cone defined by the $d-1$ inequalities
\begin{align*}
    -2z_1+z_2&\ge0\\
    z_1-2z_2+z_3&\ge0\\
    z_2-2z_3+z_4&\ge0\\
    &\vdots\\
    z_{d-2}-2z_{d-1}+z_d&\ge0.
\end{align*}
\end{prop}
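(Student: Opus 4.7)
The plan is to prove both inclusions; write $C$ for the polyhedral cone cut out by the $d-1$ claimed inequalities (taking $z_0:=0$).

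For the containment $\trop(\HH'_{2d})\subseteq C$ I would invoke the moment interpretation of Proposition \ref{PropStieltjes}: every $\y\in\HH'_{2d}$ is the moment sequence $y_k=\int s^k\,d\nu(s)$ of a probability measure $\nu$ on $[0,\infty)$ with $y_0=1$. Cauchy--Schwarz applied to $s^{(i-1)/2}$ and $s^{(i+1)/2}$ yields $y_i^2\le y_{i-1}y_{i+1}$ for $i=1,\dots,d-1$; equivalently, these are nonnegative $2\times 2$ principal minors of $\H(\y)$ for odd $i$ and of $\H'(\y)$ for even $i$. Each such polynomial inequality holds on $\HH'_{2d}\cap\R^d_{>0}$, so taking $\log_t$ of both sides and letting $t\to\infty$ produces the linear inequality $z_{i-1}+z_{i+1}\ge 2z_i$ in the tropicalization, giving the desired containment.

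For the reverse containment, given $(z_1,\dots,z_d)\in C$, I would construct a one-parameter family of atomic probability measures on $[0,\infty)$ whose moment sequences tropicalize to this point. For $j=1,\dots,d$, set $\beta_j:=z_j-z_{j-1}$ and $\alpha_j:=jz_{j-1}-(j-1)z_j$, so that the line $L_j(k):=\alpha_j+k\beta_j$ passes through $(j-1,z_{j-1})$ and $(j,z_j)$. The convexity hypothesis $z_{i-1}+z_{i+1}\ge 2z_i$ is equivalent to $\beta_1\le\beta_2\le\dots\le\beta_d$, and an elementary extrapolation argument using these slope inequalities shows $\max_j L_j(k)=z_k$ for every $k\in\{0,1,\dots,d\}$; in particular $\max_j\alpha_j=L_1(0)=0$.

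Now, for $t>1$ put $s_j:=t^{\beta_j}>0$, $w_j:=t^{\alpha_j}/Z(t)$ with $Z(t):=\sum_{j=1}^d t^{\alpha_j}$, and form $\nu_t:=\sum_{j=1}^d w_j\delta_{s_j}$. This is a probability measure on $[0,\infty)$, so its moments $\y(t)$ lie in $\HH'_{2d}$, and
\[
y_k(t)=\sum_{j=1}^d w_j s_j^k=\frac{\sum_{j=1}^d t^{\alpha_j+k\beta_j}}{Z(t)},
\]
whence $\log_t y_k(t)\to\max_j(\alpha_j+k\beta_j)-\max_j\alpha_j=z_k$ as $t\to\infty$. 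Thus $(z_1,\dots,z_d)\in\trop(\HH'_{2d})$. The main obstacle I expect is the combinatorial verification that $\max_j L_j(k)=z_k$ at each integer $k\in\{0,\dots,d\}$; the nontrivial boundary case $k=0$ forces $\max_j\alpha_j=0$, which follows from the fact that $z_j/j$ is nondecreasing, equivalent to convexity anchored at $z_0=0$.
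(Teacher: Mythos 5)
Your proposal is correct, but it proves the statement by a genuinely different route than the paper. The paper deduces the result from Theorem 4.4 of \cite{blekherman2020tropicalization}: since $\HH_{2d}'=S_{\H\oplus\H'}$ for a symmetric matrix with monomial entries, its tropicalization is cut out by the tropicalized $2\times2$ principal minors once one checks that no such minor vanishes identically and that the $2\times2$-minor relaxation has nonempty interior (verified there via the Hilbert matrix and total positivity), and then one observes that the consecutive-row minors conically span all the others. You instead prove both inclusions by hand: the inclusion of $\trop(\HH_{2d}')$ in the claimed cone follows from the consecutive $2\times2$ principal minors of $\H$ and $\H'$ (one caveat: not every point of the truncated spectrahedron is literally a Stieltjes moment sequence, only a limit of such, so your Cauchy--Schwarz phrasing is not quite accurate at boundary points; but the equivalent minor formulation you also give is what carries the argument, so this is harmless), and the reverse inclusion is obtained by exhibiting, for each point of the cone, the explicit family of atomic probability measures with atoms $t^{\beta_j}$ and weights proportional to $t^{\alpha_j}$, whose moment vectors lie in $\HH_{2d}'\cap\R^d_{>0}$ and satisfy $\log_t y_k(t)\to z_k$; your identity $\max_j(\alpha_j+k\beta_j)=z_k$ is exactly the statement that a convex sequence anchored at $z_0=0$ is the maximum of its chord lines, and your slope argument for it is correct. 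Your approach buys a self-contained, constructive proof of the harder inclusion (the one the paper outsources to the cited theorem), producing explicit witnesses for every point of the polyhedron; the paper's approach is shorter, reuses the machinery already set up for Theorem \ref{tropSOS}, and applies uniformly to the other tropicalizations computed in the paper without a bespoke construction for each.
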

\begin{proof}
We use the notation of the proof of Theorem \ref{tropSOS}. Observe that $\trop(\HH_{2d}')=\trop(S_{\H\oplus\H'}^{2\times2})=\trop(S_\H^{2\times2})\cap\trop(S_{\H'}^{2\times2})$ where $\H=\H(y_1,\dots,y_d)$ and $\H'=\H'(y_1,\dots,y_d)$. By Theorem 4.4 of \cite{blekherman2020tropicalization} it is enough to prove that no $2\times2$ principal minor of $\H\oplus\H'$ vanishes, which is clear, and that $S_{\H\oplus\H'}^{2\times2}$ has an interior in $\R^d$. The point $(\frac12,\frac13,\dots,\frac1{d+1})$ is an interior point of $S_{\H\oplus\H'}$ since $\H(\frac12,\frac13,\dots,\frac1{d+1})$ is a Hilbert matrix and Hilbert matrices are \emph{totally positive} (every submatrix has positive determinant) so its submatrix $\H'(\frac12,\frac13,\dots,\frac1{d+1})$ is also positive definite. Finally, the inequalities coming from $2\times2$ principal minors with consecutive rows conically span the inequalities coming from the other $2\times2$ principal minors.
\end{proof}


\section{Proofs of main theorems}\label{sectionApp}


We use the following algorithm, justified by Proposition \ref{Tcone}, to compute tropical convex hulls in SAGE. In our case tropical conical hulls coincide with tropical convex hulls since the map $\Phi\circ\mu$ is homogeneous.

\begin{alg}\label{Algorithm}
Given a polyhedron $P$ return its tropical convex hull.
\begin{verbatim}
    def tropConv(P):
    d = P.ambient_dim();
    I = matrix.identity(d);
    E = I.rows();
    sectors = [Polyhedron(rays=[E[j] for j in range(d) if j != i], 
               lines=[Sequence([1 for k in range(d)])]) for i in range(d)];
    MinkSums = [P.minkowski_sum(S) for S in sectors];
    tconv = MinkSums[0];
    for i in range(1,d):
        tconv = tconv.intersection(MinkSums[i])
    return tconv
\end{verbatim}
\end{alg}

We use the above algorithm to compute $\trop(\EEE\PP_{2d}^*)$ for $d=5,6,7,8$ (and $\trop((\PP_6')^*)$ defined below). The input is $P=\tilde{\Phi}(\trop(\HH_{2d}'))$. Having the defining inequalities for $\trop(\HH_{2d}')$ (Proposition \ref{PropTropEvenHankel}) we compute its extreme rays (again using SAGE), and so the image of these under the linear map $\tilde{\Phi}$ will conically span $\tilde{\Phi}(\trop(\HH_{2d}'))$.

\begin{proof}[Proof of Theorem \ref{Thm1}]
We have $\SS_2=\PP_2$ and $\SS_4=\PP_4$ (Theorem \ref{thmS4}). By Proposition \ref{onceNEQalwaysNEQ} it is enough to prove that that $\SS_6\subsetneq\PP_6$. 

We call a partition \emph{odd} if it has at least one odd part appearing an odd number of times. Consider the linear subspace of limit sextics without terms of the form $\m_\l$ where $\l$ is odd. Namely, 
\begin{align}
H=\left\{\sum_{\l\vdash6}c_\l\m_\l:c_{3,1^3}=c_{3,2,1}=c_{5,1}=0\right\}.
\end{align}
Let $\SS_6':=\SS_6|_H$ and $\PP_6':=\PP_6|_H$ be the slices of the limit cones with the linear subspace $H$. 
The dual cone $(\PP_6')^*$ is the limit set of the conical hull of $\Phi_6'(\HH_6)$ where
\begin{align*}
\Phi_6':(p_1,p_2,p_3,p_4,p_5,p_6)\mapsto(p_1^6,p_2p_1^4,
p_2^2p_1^2,p_2^3,p_3^2,p_4p_1^2,p_4p_2,p_6)
\end{align*}
is the monomial map given by the partitions of 6 where each odd part appears an even number of times.
Hence $(\PP_6')^*\subseteq\R_{\ge0}^8$. Now compute $\trop((\PP_6')^*)$ with Algorithm \ref{Algorithm} to obtain that $$z_{1^6}-3z_{4,1^2}+z_{4,2}+z_{6}\ge0$$ defines one of its facets. 

Finally observe that the above inequality does not define a facet of $\trop((\SS_6')^*)$ since its facets are given by $2\times2$ principal minors of $\tilde{\RR}_6$ that do not contain $\m_\l$'s where $\l$ is odd. They are of the form $z_\u-z_\nu$ or $z_\u-2z_\omega+z_\nu$ where $\u,\nu,\omega$ are different non-odd partitions of 6. 

 Hence $\trop((\PP_6')^*)\subsetneq\trop((\SS_6')^*)$, which implies $\SS_6'\subsetneq\PP_6'$, and therefore $\SS_6\subsetneq\PP_6$.
\end{proof}

%

The proof of Theorem \ref{Thm2Even} relies on computing $\trop(\EEE\PP_{2d}^*)$ for $d=5,6,7,8$ with Algorithm \ref{Algorithm} and observing that we also obtain facets that are not of the form $z_\mu-z_u$ or $z_\mu-2z_\omega+z_\nu$.

\begin{proof}[Proof of Theorem \ref{Thm2Even}]
Trivially $\EEE\SS_2=\EEE\PP_2$. Given that $\SS_4=\PP_4$ (Theorem \ref{thmS4}) then $\EEE\SS_4=\EEE\PP_4$, also $\EEE\SS_6=\EEE\PP_6$ and $\EEE\SS_8=\EEE\PP_8$ (Theorem \ref{teoES6} and Theorem \ref{teoES8}). By Proposition \ref{Prop5678} we have $\EEE\SS_{2d}\subsetneq\EEE\PP_{2d}$ for $d=5,6,7,8$, and Proposition \ref{PropEven} implies the strict inclusion for all larger $d$.
\end{proof}


\subsection{Explicit example}



The facets of $\trop(\EEE\PP_{2d}^*)$ that are not facets of $\trop(\EEE\SS_{2d}^*)$ allow us to find explicit examples of nonnegative symmetric forms that are not sums of squares for any sufficiently large number of variables. 

The limit forms $a\m_6+b\m_{4,2}+c\m_{1^6}-3\m_{4,1^2}$ with $abc=1$ and $a,b,c\ge0$ are nonnegative because for any fixed number of variables $n\ge6$ we have,
$$\frac{ap_6+bp_4p_2+cp_1^6}3\ge\sqrt[3]{p_6p_4p_2p_1^6}\ge p_4p_1^2$$
which follows from the AM-GM inequality and majorization inequalities for power means \cite{cuttler2011inequalities}.

After trying different values for $a,b,c$ we found that $\frac12\m_6+\frac12\m_{4,2}+4\m_{1^6}-3\m_{4,1^2}$ is not a sum of squares in the limit. This can be verified by setting a semidefinite program constructed from the matrices $\Q_{\emptyset,6}$, $\Q_{(1),6}$ and $\Q_{(1^2),6}$ in Proposition \ref{prop6}, and obtaining a rational infeasibility certificate using YALMIP \cite{Lofberg2004} toolbox on MATLAB.

\bibliographystyle{alpha}
\bibliography{ssos}

\newpage
\section{Appendix}

\begin{ex}[symmetry basis for $H_{n,3}$]\label{apphn3}
For all $n\ge6$
\begin{align*}
H_{n,3}\cong_{\mathcal{S}_n}(\Sc_{(n)})^{\oplus3}\oplus(\Sc_{(n-1,1)})^{\oplus4}\oplus\Sc_{(n-2,1,1)}\oplus(\Sc_{(n-2,2)})^{\oplus2}\oplus\Sc_{(n-3,3)}.
\end{align*}
A symmetry basis for $(H_{n,3})_{(n)}$ is simply $\{m_{1^3}^{(n)},m_{2,1}^{(n)},m_3^{(n)}\}$.
We have $h_{(1),3}=4$ so the following forms constitute a symmetry basis for $(H_{n,3})_{(n-1,1)}$ since they are linearly independent
\begin{eqnarray*}
Y_{(n-1,1)}(x_1x_3x_4)&=&\aa_{(n-1,1)}\frac12(x_1x_3x_4-x_2x_3x_4)=x_1m_{1^2}(\x_{[1]})-m_{1^3}(\x_{[1]})\\
Y_{(n-1,1)}(x_1^2x_3)&=&\aa_{(n-1,1)}\frac12(x_1^2x_3-x_2^2x_3)=x_1^2m_1(\x_{[1]})-m_{2,1}(\x_{[1]})\\
Y_{(n-1,1)}(x_1^2x_2)&=&\aa_{(n-1,1)}\frac12(x_1^2x_2-x_1x_2^2)=x_1^2m_1(\x_{[1]})-x_1m_2(\x_{[1]}),\\
Y_{(n-1,1)}(x_1^3)&=&\aa_{(n-1,1)}\frac12(x_1^3-x_2^3)=x_1^3-m_3(\x_{[1]}).
\end{eqnarray*}
Alternatively one can show that applying $Y_{(n-1,1)}$ to other monomials results in the same or the negatives of the forms computed above, but multiples generate the same copy of $\Sc_{\l}$, so the above forms should be linearly independent given that $h_{(1),3}=4$. Repeating the same procedure for the rest of the isotypic components we find the following symmetry bases for $(H_{n,3})_{(n)}$, $(H_{n,3})_{(n-1,1)}$, $(H_{n,3})_{(n-2,1^2)}$, $(H_{n,3})_{(n-2,2)}$ and $(H_{n,3})_{(n-3,3)}$:
\begin{align*}
\B_{\emptyset,3}&:=\{m_{1^3},\, m_{2,1},\, m_3\},\\
\B_{(1),3}&:=\{x_1m_{1^2}-m_{1^3},\, x_1^2m_1-m_{2,1},\, x_1^2m_1-x_1m_2,\, x_1^3-m_3\},\\
\B_{(1^2),3}&:=\{x_1^2x_2-x_1x_2^2-x_2m_2-x_1^2m_1+x_2^2m_1+x_1m_2\},\\
\B_{(2),3}&:=\{x_1x_2m_1-(x_1+x_2)m_{1^2}+m_{1^3},x_1^2x_2+x_1x_2^2-(x_1^2+x_2^2)m_1-(x_1+x_2)m_2+2m_{2,1}\},\\
\B_{(3),3}&:=\{x_1x_2x_3-(x_1x_2+x_1x_3+x_2x_3)m_1+(x_1+x_2+x_3)m_{1^2}-m_{1^3}\}.
\end{align*}{\flushright$\triangle$}
\end{ex}

\begin{ex}[isotypic matrices for sextics at infinity]\label{appiso6}
We compute isotypic matrices $\Q_{\u,6}$ for each $\u\in \F_3$.
Using Lemma \ref{basis} on the bases obtained in Example \ref{apphn3} we obtain
\begin{align*}
\sym_\infty(m_{1^3}\cdot m_{1^3})=\m_{1^6},\,
\sym_\infty(m_{1^3}\cdot m_{2,1})=\m_{2,1^4},\,\sym_\infty(m_{1^3}\cdot m_3)&=\m_{3,1^3},\\
\sym_\infty(m_{2,1}\cdot m_{2,1})=\m_{2^2,1^2},\,
\sym_\infty(m_{2,1}\cdot m_3)&=\m_{3,2,1},\\
\sym_\infty(m_3\cdot m_3)&=\m_{3^2},
\end{align*}
therefore
\begin{align*}
\Q_{\emptyset,6}=\begin{bmatrix}
\m_{1^6} & \m_{2,1^4} & \m_{3,1^3}\\ \m_{2,1^4} & \m_{2^2,1^2} & \m_{3,2,1}\\ \m_{3,1^3} & \m_{3,2,1} & \m_{3^2}
\end{bmatrix}.
\end{align*}
From the symmetry basis $\B_{(1),3}$ we get for example $$(x_1m_{1^2}-m_{1^3})^2=x_1^2m_{1^2}^2-2x_1m_{1^2}m_{1^3}+m_{1^3}^2,\text{ therefore}$$
\begin{align*}
&\phantom{=}\,\,\sym_\infty(x_1^2m_{1^2}^2-2x_1m_{1^2}m_{1^3}+m_{1^3}^2)\\
&=\sym_\infty(x_1^2m_{1^2}^2)-2\sym_\infty(x_1m_{1^2}m_{1^3})+\sym_\infty(m_{1^3}^2)\\
&=\m_{2,1^4}-2\m_{1^6}+\m_{1^6}=\m_{2,1^4}-\m_{1^6}.
\end{align*}
Similarly, for the rest of the combinations we get
\begin{align*}
M=\begin{bmatrix}
\m_{2,1^4}-\m_{1^6} & \m_{3,1^3}-\m_{2,1^4} & \m_{3,1^3}-\m_{2^2,1^2} & \m_{4,1^2}-\m_{3,1^3}\\
\m_{3,1^3}-\m_{2,1^4} & \m_{4,1^2}-\m_{2^2,1^2} & \m_{4,1^2}-\m_{3,2,1} & \m_{5,1}-\m_{3,2,1}\\
\m_{3,1^3}-\m_{2^2,1^2} & \m_{4,1^2}-\m_{3,2,1} & \m_{4,1^2}-2\m_{3,2,1}+\m_{2^3} & \m_{5,1}-\m_{4,2}\\
\m_{4,1^2}-\m_{3,1^3} & \m_{5,1}-\m_{3,2,1} & \m_{5,1}-\m_{4,2} & \m_6-\m_{3^2}
\end{bmatrix}
\end{align*}
Which we may call $\Q_{(1),6}$ instead, but one can give a simpler form by considering a congruent matrix and it will not change the description of $\SS_6$ given in Theorem \ref{Qrep} since the PSD cone is invariant under congruence. Hence one can set $\Q_{(1),6}$ to be $PMP^\top$ for some nonsingular $P$.

Setting $P=\begin{bmatrix}-1&0&0&0\\ 0&-1&0&0\\ 0&-1&1&0\\ 0&0&0&-1\end{bmatrix}$ we obtain the better looking
\begin{align*}
\Q_{(1),6}=\begin{bmatrix}
\m_{2,1^4}-\m_{1^6} & \m_{3,1^3}-\m_{2,1^4} & \m_{2^2,1^2}-\m_{2,1^4} & \m_{4,1^2}-\m_{3,1^3}\\
\m_{3,1^3}-\m_{2,1^4} & \m_{4,1^2}-\m_{2^2,1^2} & \m_{3,2,1}-\m_{2^2,1^2} & \m_{5,1}-\m_{3,2,1}\\
\m_{2^2,1^2}-\m_{2,1^4} & \m_{3,2,1}-\m_{2^2,1^2} & \m_{2^3}-\m_{2^2,1^2} & \m_{4,2}-\m_{3,2,1} \\
\m_{4,1^2}-\m_{3,1^3} & \m_{5,1}-\m_{3,2,1} & \m_{4,2}-\m_{3,2,1} & \m_6-\m_{3^2}
\end{bmatrix}
\end{align*}

Applying the same procedure to the rest of the isotypic components we obtain
\begin{eqnarray*}
\Q_{(1^2),6}&=& \m_{4,2}-\m_{4,1^2}-\m_{3^2}+2\m_{3,2,1}-\m_{2^3},\\[16pt]
\Q_{(2),6}&=&\begin{bmatrix}
\m_{2^2,1^2}-2\m_{2,1^4}+\m_{1^6} & 2(\m_{3,2,1}-\m_{3,1^3}-\m_{2^2,1^2}+\m_{2,1^4})\\
2(\m_{3,2,1}-\m_{3,1^3}-\m_{2^2,1^2}+\m_{2,1^4}) & 2(\m_{4,2}-\m_{4,1^2}+\m_{3^2}-2\m_{3,2,1}-\m_{2^3}+2\m_{2^2,1^2})
\end{bmatrix},\\[16pt]
\Q_{(3),6}&=& \m_{2^3}-3\m_{2^2,1^2}+3\m_{2,1^4}-\m_{1^6}.
\end{eqnarray*}
Now notice that for all $\alpha,\beta\in\mathbb{R}$ we have $$\begin{bmatrix}\alpha\\ \beta\end{bmatrix}^\top\Q_{(2),6}\begin{bmatrix}\alpha\\ \beta\end{bmatrix}=\begin{bmatrix}\alpha\\ 2\beta-\alpha\\ -2\beta\end{bmatrix}^\top\Q_{\emptyset,6}\begin{bmatrix}
\alpha\\ 2\beta-\alpha\\ -2\beta\end{bmatrix}+2\beta^2\Q_{(1^2),6}$$ and $\Q_{(3),6}=[1,0,-1,0]\Q_{(1),6}[1,0,-1,0]^\top$. Hence $\Q_{\emptyset,6}$, $\Q_{(1),6}$ and $\Q_{(1^2),6}$ are enough for describing $\SS_6$, i.e.,
\begin{align}
\SS_6=\{\langle A_\emptyset,\Q_{\emptyset,6}\rangle+\langle A_{(1)},\Q_{(1),6}\rangle+\langle A_{(1^2)},\Q_{(1^2),6}\rangle:A_\emptyset\in S_3^+,A_{(1)}\in S_4^+,A_{(1^2)}\ge0\}.
\end{align}{\flushright$\triangle$}
\end{ex}

\begin{ex}[Even symmetric sos sextics]\label{appes6}
We can describe $\SS_6^e$ in three ways: applying $\e$ to the matrices found on Example \ref{appiso6}; applying $\e$ to the symmetry bases computed in Example \ref{apphn3} and then computing the matrices $\EE_{\u,6}$; or with partial symmetry reduction by collecting the terms of our symmetry bases which do not contain an $m_\l$ with an odd part (i.e. where $\l$ has an odd part). In the first case, by Example \ref{appiso6}, it is enough to consider $\Q_{\emptyset,6}$, $\Q_{(1),6}$ and $\Q_{(1^2),6}$, so we obtain $\e(\Q_{\emptyset,6})=0$, $\e(\Q_{(1),6})=\begin{bmatrix}\m_{2^3} & \m_{4,2}\\ \m_{4,2} & \m_6\end{bmatrix}$ and $\e(\Q_{(1^2),6})=\m_{4,2}-\m_{2^3}$ where we have abused notation and removed rows and columns of zeros. 

Hence $\SS_6^e=\left\{\left\langle A,\begin{bmatrix}\m_{2^3} & \m_{4,2}\\ \m_{4,2} & \m_6\end{bmatrix}\right\rangle+\a(\m_{4,2}-\m_{2^3}):A\in S_2^+,\a\ge0\right\}$. 

In the third case, looking at the terms found in the symmetry basis for $H_{n,3}$ in Example \ref{apphn3} we see that the only terms with no $m_\l$'s with odd parts are $x_1m_2, x_1x_2^2, x_1^3, x_2m_2, x_1^2x_2, x_1x_2x_3$, and from these we can ignore $x_1x_2x_3$ because when we multiply it with the others we will always obtain variables with odd exponents and $\sym_\infty(x_1x_2x_3)^2=\m_{2^3}$ which can also be obtained as $\sym_\infty(x_1m_2)^2$, so it gives no new information. Choosing $\w:=(x_1m_2, x_1x_2^2, x_1^3, x_2m_2, x_1^2x_2)^\top$ (which are also the terms appearing in the symmetry bases $\B_{\emptyset,3},\B_{(1),3},\B_{(1^2),3}$) we obtain the block diagonal matrix
\begin{align*}
\sym_\infty(\w\w^\top)=\begin{bmatrix}
\m_{2^3}&\m_{2^3}&\m_{4,2}\\
\m_{2^3}&\m_{4,2}&\m_{4,2}\\
\m_{4,2}&\m_{4,2}&\m_6
\end{bmatrix}\oplus\begin{bmatrix}\m_{2^3}&\m_{2^3}\\ \m_{2^3}&\m_{4,2}\end{bmatrix}
\end{align*}
where we further can ignore the second block because it equals the top $2\times2$ principal submatrix of the first block.
 
Hence we also have $\SS_6^e=\left\{\left\langle A,\begin{bmatrix}
\m_{2^3}&\m_{2^3}&\m_{4,2}\\
\m_{2^3}&\m_{4,2}&\m_{4,2}\\
\m_{4,2}&\m_{4,2}&\m_6
\end{bmatrix}\right\rangle:A\in S_3^+\right\}$.$\triangle$
\end{ex}

\begin{ex}[even symmetric sos octics]\label{appes8}
For all $n\ge 8$
\begin{align*}
H_{n,4}\cong_{\mathcal{S}_n}(\Sc_{(n)})^{\oplus5}\oplus
(\Sc_{(n-1,1)})^{\oplus7}\oplus
(\Sc_{(n-2,1^2)})^{\oplus2}\oplus
(\Sc_{(n-2,2)})^{\oplus5}\oplus
\Sc_{(n-3,2,1)}\oplus
(\Sc_{(n-3,3)})^{\oplus2}\oplus
\Sc_{(n-4,4)}.
\end{align*}
We compute the symmetry bases $\B_{\u,4}$
\begin{align*}
\B_{\emptyset,4}&=\{m_{1^4},\,\, m_{2,1^2},\,\, m_{2^2},\,\, m_{3,1},\,\, m_4\},\\
\B_{(1),4}&=\{x_1m_{1^3}-m_{1^4},\,\, x_1^2m_{1^2}-m_{2,1^2},\,\, x_1^2m_{1^2}-x_1m_{2,1},\,\,x_1^2m_2-m_{2^2},\\
&\qquad x_1^3m_1-m_{3,1},\,\, x_1^3m_1-x_1m_3,\,\, x_1^4-m_4\},\\
\B_{(1^2),4}&=\{x_1^2x_2m_1-x_1x_2^2m_1-x_2m_{2,1}-x_1^2m_{1^2}+x_2^2m_{1^2}+x_1m_{2,1},\\
&\qquad x_1^3x_2-x_1x_2^3-x_2m_3-x_1^3m_1+x_2^3m_1+x_1m_3\},\\
\B_{(2),4}&=\{x_1x_2m_{1^2} -x_1m_{1^3} -x_2m_{1^3} +m_{1^4},\\
&\qquad x_1x_2m_2 -x_1m_{2,1} -x_2m_{2,1} +m_{2,1^2},\\
&\qquad x_1^2x_2m_1 +x_1x_2^2m_1 -x_1^2m_{1^2} -x_2^2m_{1^2} -x_1m_{2,1} -x_2m_{2,1} +2m_{2,1^2},\\
&\qquad x_1^2x_2^2 -x_1^2m_2 -x_2^2m_2 +m_{2^2},\\
&\qquad x_1^3x_2 +x_1x_2^3 -x_1^3m_1 -x_2^3m_1 -x_1m_3 -x_2m_3 +2m_{3,1}\},\\
\B_{(2,1),4}&=\{2x_1^2x_2x_3-2x_1^2x_2m_1-2x_1^2x_3m_1-x_1x_2^2x_3-x_1x_2x_3^2+x_1x_2^2m_1\\
&+x_1x_3^2m_1-2x_2x_3m_2+x_2m_{2,1}+x_3m_{2,1}+2x_1^2m_{1^2} +x_2^2x_3m_1 +x_2x_3^2m_1\\
&-x_2^2m_{1^2} -x_3^2m_{1^2} +x_1x_2m_2 +x_1x_3m_2 -2x_1m_{2,1}\},\\
\B_{(3),4}&=\{x_1x_2x_3m_1-x_1x_2m_{1^2}-x_1x_3m_{1^2}-x_2x_3m_{1^2}+x_1m_{1^3}+x_2m_{1^3} +x_3m_{1^3}-m_{1^4},\\
&\qquad x_1^2x_2x_3 +x_1x_2^2x_3 +x_1x_2x_3^2 -x_1^2x_2m_1 -x_1^2x_3m_1 -x_1x_2^2m_1 -x_2^2x_3m_1\\
    &-x_1x_3^2m_1 -x_2x_3^2m_1 +x_1^2m_{1^2} +x_2^2m_{1^2} +x_3^2m_{1^2} -x_1x_2m_2 -x_1x_3m_2-x_2x_3m_2\\ &+2x_1m_{2,1} +2x_2m_{2,1}+2x_3m_{2,1} -3m_{2,1^2}\},\\
\B_{(4),4}&=\{x_1x_2x_3x_4-(x_1x_2x_3+x_1x_2x_4+x_1x_3x_4+x_2x_3x_4)m_1\\
&+(x_1x_2+x_1x_3+x_1x_4+x_2x_3+x_2x_4+x_3x_4)m_{1^2}-(x_1+x_2+x_3+x_4)m_{1^3}+m_{1^4}\}
\end{align*}

and then consider $\e(\B_{\u,4})$ to compute the $\EE_{\u,8}$ and obtain
\begin{align*}
\EE_{\emptyset,8}&=\begin{bmatrix}\m_{2^4}&\m_{4,2^2}\\ 
\m_{4,2^2}&\m_{4^2}\end{bmatrix}\\
\EE_{(1),8}&=\begin{bmatrix}\m_{4,2^2}-\m_{2^4}&\m_{6,2}-\m_{4,2^2}\\ 
\m_{6,2}-\m_{4,2^2}&\m_8-\m_{4^2}\end{bmatrix}\\
\EE_{(1^2),8}&=\m_{6,2}-\m_{4^2}\\
\EE_{(2),8}&=\begin{bmatrix}\m_{2^4}&0&2\m_{4,2^2}\\ 0&\m_{4^2}-2\m_{4,2^2}+\m_{2^4}&0\\ 
2\m_{4,2^2}&0&2\m_{6,2}+2\m_{4^2}\end{bmatrix}\\
\EE_{(2,1),8}&=\m_{4,2^2}-\m_{2^4}\\
\EE_{(3),8}&=\m_{4,2^2}-\m_{2^4}\\
\EE_{(4),8}&=\m_{2^4}
\end{align*}
Again some of the $\EE_{\u,8}$ are redundant for describing $\EEE\SS_8$. For instance, for all $\a,\b,\g\in\R$ we have $(\a,\b,\g)\EE_{(2),8}(\a,\b,\g)^\top=2\g^2\EE_{(1^2),8}+\left\langle\begin{bmatrix}\a^2+\b^2&2\a\g-\b^2\\ 2\a\g-\b^2&\b^2+4\g^2\end{bmatrix},\EE_{\emptyset,8}\right\rangle$. Also $\EE_{(2,1),8}$, $\EE_{(3),8}$ and $\EE_{(4),8}$ can be easily obtained in a similar way from $\EE_{\emptyset,8}$, $\EE_{(1),8}$ and $\EE_{(1^2),8}$. Therefore 
\begin{align*}
\EEE\SS_8=\{\langle A_\emptyset,\EE_{\emptyset,8}\rangle+\langle A_{(1)},\EE_{(1),8}\rangle+\alpha\EE_{(1^2),8}:A_\emptyset\in S_2^+,A_{(1)}\in S_2^+,\alpha\ge0\}.
\end{align*} 
\end{ex}

\begin{ex}\label{appes10} 
For all $n\ge10$
\begin{align*}
H_{n,5}\cong_{\mathcal{S}_n}\Sc_{(n)}^{\oplus7}\oplus
\Sc_{(n-1,1)}^{\oplus12}\oplus
\Sc_{(n-2,1,1)}^{\oplus5}\oplus
\Sc_{(n-2,2)}^{\oplus9}\oplus
\Sc_{(n-3,2,1)}^{\oplus3}\oplus
\Sc_{(n-3,3)}^{\oplus5}\oplus
\Sc_{(n-4,3,1)}\oplus
\Sc_{(n-4,4)}^{\oplus2}\oplus
\Sc_{(n-5,5)}.
\end{align*}
After computing symmetry bases for each $\mathcal{S}_n$-isotypic component of $H_{n,5}$ we compute the corresponding even isotypic matrices:
\begin{align*}
\EE_{(1),10}&=\begin{bmatrix}
\m_{2^5}&\m_{4,2^3}&\m_{4,2^3}&\m_{6,2^2}\\
\m_{4,2^3}&\m_{4^2,2}&\m_{4^2,2}&\m_{6,4}\\
\m_{4,2^3}&\m_{4^2,2}&\m_{6,2^2}&\m_{8,2}\\
\m_{6,2^2}&\m_{6,4}&\m_{8,2}&\m_{10}
\end{bmatrix},\\
\EE_{(1^2),10}&=\begin{bmatrix}
\m_{4,2^3}-\m_{2^5}&\m_{4^2,2}-\m_{4,2^3}&\m_{6,2^2}-\m_{4,2^3}\\
\m_{4^2,2}-\m_{4,2^3}&\m_{6,4}-\m_{6,2^2}&\m_{6,4}-\m_{4^2,2}\\
\m_{6,2^2}-\m_{4,2^3}&\m_{6,4}-\m_{4^2,2}&\m_{8,2}-\m_{4^2,2}
\end{bmatrix},\\
\EE_{(2),10}&=\begin{bmatrix}
\m_{4,2^3}-\m_{2^5}&\m_{4^2,2}-\m_{4,2^3}&\m_{6,2^2}-\m_{4,2^3}\\
\m_{4^2,2}-\m_{4,2^3}&\m_{6,4}-\m_{6,2^2}&\m_{6,4}-\m_{4^2,2}\\
\m_{6,2^2}-\m_{4,2^3}&\m_{6,4}-\m_{4^2,2}&\m_{8,2}-\m_{4^2,2}
\end{bmatrix},\\
\EE_{(2,1),10}&=\begin{bmatrix}\m_{4^2,2}-2\m_{4,2^3}+\m_{2^5}&0\\0&\m_{6,2^2}-\m_{4^2,2}\end{bmatrix},\\
\EE_{(3),10}&=\begin{bmatrix}
\frac13\m_{2^5}&0&\m_{4,2^3}\\
0&\m_{4^2,2}-2\m_{4,2^3}+\m_{2^5}&0\\
\m_{4,2^3}&0&\m_{6,2^2}+2\m_{4^2,2}
\end{bmatrix}\\
\EE_{(3,1),10}&=\m_{4,2^3}-\m_{2^5},\\
\EE_{(4),10}&=\m_{4,2^3}-\m_{2^5},\\
\EE_{(5),10}&=\m_{2^5}
\end{align*}
It is not hard to see that only $\EE_{(1),10}$ and $\EE_{(1^2),10}$ are enough for describing $\EEE\SS_{10}$. Hence we can collect the non-odd terms from $\B_{(1),5}$ and $\B_{(1^2),5}$. We separate these terms in two sets: the ones having an odd power of $x_1$ and the ones having an even power of $x_1$: $\{x_1m_{2^2},x_1x_2^2m_2,x_1m_4,x_1^3m_2,x_1^3x_2^2,
x_1x_2^4,x_1^5\}$ and $\{x_2m_{2^2},x_1^2x_2m_2,x_2m_4,x_2^3m_2,x_1^2x_2^3,x_1^4\}$.  Multiplying an element from the first set with an element of the second will give an odd power in $x_1$ hence it will even-symmetrize to zero. From the first set we obtain the following matrix after partial symmetrization:
\begin{align}\label{partialmat10}
\RR_{10}^e=\begin{bmatrix}
\m_{2^5} & \m_{2^5} & \m_{4,2^3} & \m_{4,2^3} & \m_{4,2^3} & \m_{4,2^3} & \m_{6,2^2}\\
\m_{2^5} & \m_{4,2^3} & \m_{4,2^3} & \m_{4,2^3} & \m_{4^2,2} & \m_{6,2^2} & \m_{6,2^2}\\
\m_{4,2^3} & \m_{4,2^3} & \m_{4^2,2} & \m_{4^2,2} & \m_{4^2,2} & \m_{4^2,2} & \m_{6,4}\\
\m_{4,2^3} & \m_{4,2^3} & \m_{4^2,2} & \m_{6,2^2} & \m_{6,2^2} & \m_{4^2,2} & \m_{8,2}\\
\m_{4,2^3} & \m_{4^2,2} & \m_{4^2,2} & \m_{6,2^2} & \m_{6,4} & \m_{6,4} & \m_{8,2}\\
\m_{4,2^3} & \m_{6,2^2} & \m_{4^2,2} & \m_{4^2,2} & \m_{6,4} & \m_{8,2} & \m_{6,4}\\
\m_{6,2^2} & \m_{6,2^2} & \m_{6,4} & \m_{8,2} & \m_{8,2} & \m_{6,4} & \m_{10}
\end{bmatrix}.
\end{align}
From the second set we get precisely the top $6\times6$ principal submatrix of the above matrix, so it gives no new information, so $\EEE\SS_{10}=\{\langle A_{(1)},\EE_{(1),10}\rangle+\langle A_{(1^2)},\EE_{(1^2),10}\rangle: A_{(1)}\in S_4^+,A_{(1^2)}\in S_3^+\}=\{\langle A,\RR_{10}^e\rangle:A\in S_7^+\}$.
$\triangle$
\end{ex}

\begin{ex}\label{APPsses8=nnes8}
\emph{Proof of Theorem} \ref{teoES8}.

From Example \ref{appes8} we obtain $\EEE\SS_8^*=\{(a,b,c,d,e)\in\R^5:Q\succeq0\}$ where $Q=X\oplus Y\oplus Z$ is the $5\times5$ block diagonal matrix with blocks 
\begin{center}
$X=\begin{bmatrix}
a & b\\
b & c
\end{bmatrix}$,\qquad $Y=\begin{bmatrix}
b-a & d-b\\
d-b & e-c
\end{bmatrix}$,\qquad $Z=d-c$,
\end{center} 

Suppose $(a,b,c,d,e)$ spans a extreme ray of $\EEE\SS_8^*$.
If $\rank X=0$ then $a=b=c=0$ so $Y=
\begin{bmatrix}
0 & d\\ d & e
\end{bmatrix}$, and then $d=0$ for it to be psd. So in this case we only find one extreme ray spanned by $(0,0,0,0,1)$, with kernel 
\begin{align}\label{kernel1}
\{(v,w,x,y,z)\in\R^5:y=0\}
\end{align}
If $\rank Y=0$ then $d=b=a$ and $e=c$, so $X=\begin{bmatrix}
a & a\\
a & c
\end{bmatrix}$ and $c\ge a$ for it to be psd, so $Z=a-c$ must be zero, and then $a=b=c=d=e$. So in this case there is only one extreme ray, spanned by $(1,1,1,1,1)$ and with kernel
\begin{align}\label{kernel2}
\{(v,w,x,y,z)\in\R^5:v=-w\}
\end{align}
If $\rank X=2$ then $\ker Q$ is strictly contained in (\ref{kernel2}) or if $\rank Y=2$ then $\ker Q$ is strictly contained in (\ref{kernel1}), so no extreme rays in these cases.

It only remains to consider the case where $\rank X=\rank Y=1$. Notice that the first rows of these matrices do not depend on $c$, so, since $Z=d-c$, only $c=d$ will possibly lead to a maximal kernel, so $d=c$. From the determinants we have $ac=b^2$ and $(b-a)(e-c)=(c-b)^2$. Hence $a=0$ implies $b=0$ and $c=b$, contradicting that $X$ has rank $1$. So $a\ne0$. If $a=b$ then $a=b=c$ and the kernel of $Q(a,b,c,d,e)$ would be strictly contained in (\ref{kernel2}), so also $a\ne b$. Hence $c=\frac{b^2}a$ and $e-c=\frac{(c-b)^2}{b-a}$, so $e=\frac{b^2}a +\frac{(\frac{b^2}a-b)^2}{b-a}=\frac{b^3}{a^2}$. And in this case we obtain the $3$-dimensional kernel
\begin{align*}
\ker Q(a,b,c,d,e)=\{(v,w,x,y,z)\in\R^5:v=-as,w=bs,x=-at,y=bt,\quad s,t\in\R\}
\end{align*}
which is not contained in (\ref{kernel1}) or (\ref{kernel2}), so it is maximal. For $Q$ to be psd we only need $a>0$ and $b>a$, so if we set $a=1$ and $b=t$ we obtain a family of extreme rays, each one spanned by $(1,t,t^2,t^2,t^3)$ for each $t>1$. Finally notice, that all the extreme rays obtained come from point evaluations: $(0,0,0,0,1)$ is the image of $(0,0,0,1)$ under $\v_4'$, which is clearly in $\HH_8'$, and $(1,t,t^2, t^2, t^3)$ for $t\ge1$ comes from a point evaluation since $\begin{bmatrix}
1 & 1 & t\\
1 & t & t^2\\
t & t^2 & t^3
\end{bmatrix}$ and $\begin{bmatrix}
1 & t\\
t & t^2
\end{bmatrix}$
are positive semidefinite for $t\ge1$.
\end{ex}

\begin{prop}\label{Prop5678}
$\trop(\EEE\PP_{2d}^*)\subsetneq\trop(\EEE\SS_{2d}^*)$ for $d=5,6,7,8$.
\end{prop}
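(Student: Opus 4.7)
The plan is to reduce the proposition to a finite polyhedral-cone computation, carried out separately for each $d \in \{5,6,7,8\}$. By Corollary \ref{tropPSD} one has
\[
\trop(\EEE\PP_{2d}^*)=\tcone\bigl(\tilde{\Phi}(\trop(\HH_{2d}'))\bigr),
\]
and by Proposition \ref{PropTropEvenHankel} the cone $\trop(\HH_{2d}')\subset\R^d$ is explicitly defined by $d-1$ linear inequalities. First I would compute the extreme rays of $\trop(\HH_{2d}')$ from these inequalities, then apply the linear map $\tilde{\Phi}:\R^d\to\R^{\pi(d)}$ (the tropicalization of the monomial map sending $(p_2,p_4,\dots,p_{2d})$ to $(p_\lambda)_\lambda$ with $\lambda$ ranging over even partitions of $2d$) to those rays in order to produce a generator set of the polyhedron $P:=\tilde{\Phi}(\trop(\HH_{2d}'))\subset\R^{\pi(d)}$. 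Feeding $P$ into Algorithm \ref{Algorithm} then outputs a facet description of $\tcone(P)=\trop(\EEE\PP_{2d}^*)$, exactly as in the slice computation that established Theorem \ref{Thm1}.

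The comparison with $\trop(\EEE\SS_{2d}^*)$ is then carried out purely by inspection of facets. By Theorem \ref{tropSOS}, every facet of $\trop(\EEE\SS_{2d}^*)$ arises from a $2\times 2$ principal minor of $\tilde{\RR}_{2d}^\varepsilon$ and hence has the shape $z_\mu-z_\nu\ge 0$ or $z_\mu-2z_\omega+z_\nu\ge 0$, involving at most three distinct variables with coefficients drawn from $\{+1,-1\}$ or $\{+1,-2,+1\}$. Facets of a pointed polyhedral cone are uniquely determined up to positive scaling, so to conclude strict containment it suffices, for each $d\in\{5,6,7,8\}$, to exhibit at least one facet of the computed cone $\trop(\EEE\PP_{2d}^*)$ whose support involves four or more distinct partitions, or whose coefficients fall outside the admissible list above. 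Any such facet fails to occur among the facets of $\trop(\EEE\SS_{2d}^*)$, and combined with the inclusion $\trop(\EEE\PP_{2d}^*)\subseteq\trop(\EEE\SS_{2d}^*)$ this forces the inclusion to be strict.

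The main obstacle is computational scalability: the ambient dimensions $\pi(5)=7$, $\pi(6)=11$, $\pi(7)=15$, $\pi(8)=22$ grow quickly, and the iterated intersection of $\pi(d)$ Minkowski sums inside Algorithm \ref{Algorithm} becomes expensive, especially at $d=8$. A practical simplification is to halt the scan of facets as soon as a single ``forbidden'' inequality is produced, since one such inequality already certifies strict containment for the corresponding $d$. Beyond this engineering concern no new mathematical idea is needed: the whole argument is a direct re-use, in four larger cases, of the tropicalization-plus-facet-inspection template already deployed for Theorem \ref{Thm1}.
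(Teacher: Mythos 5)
Your proposal is essentially the paper's own proof: the paper likewise computes $\trop(\EEE\PP_{2d}^*)$ for $d=5,6,7,8$ by pushing the extreme rays of $\trop(\HH_{2d}')$ through $\tilde{\Phi}$ and running Algorithm \ref{Algorithm}, and then certifies strictness by exhibiting facets (e.g.\ $z_{2^5}-3z_{6,2^2}+z_{6,4}+z_{8,2}\ge0$ in degree $10$) that cannot come from any $2\times2$ principal minor of $\tilde{\RR}_{2d}^\varepsilon$, hence are not facets of $\trop(\EEE\SS_{2d}^*)$. The only small correction is that uniqueness of a facet-defining inequality up to positive scaling relies on full-dimensionality rather than pointedness (these tropical cones contain the all-ones line), which is the property that actually holds and suffices here.
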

\begin{proof}
We use Algorithm \ref{Algorithm} and also an algorithm to compute $\trop(\EEE\SS_{2d}^*)$ based on Theorem \ref{ThmEvenPartialSym} (although the latter is not necessary to prove the claim). The defining inequalities (supporting hyperplanes of the facets) of $\trop(\EEE\PP_{10}^*)$ are:
\begin{align*}
-z_{2^5}+z_{4,2^3}&\ge0,\\
-z_{4^2,2}+z_{6,2^2}&\ge0,\\
-z_{6,2^2}+z_{6,4}&\ge0,\\  
-z_{6,4}+z_{8,2}&\ge0,\\
 z_{2^5}-2z_{4,2^3}+z_{4^2,2}&\ge0,\\
 z_{4,2^3}-2z_{4^2,2}+z_{6,4}&\ge0,\\
 z_{4,2^3}-2z_{6,2^2}+z_{8,2}&\ge0,\\
 z_{4^2,2}-2z_{6,4}+z_{10}&\ge0,\\     
 z_{6,2^2}-2z_{8,2}+z_{10}&\ge0,\\
 z_{2^5}-3z_{6,2^2}+z_{6,4}+z_{8,2}&\ge0.
\end{align*}

From the $2\times2$ minors obtained from equation (\ref{partialmat10}) of Example \ref{appes10} we get that $\trop(\EEE\SS_{10}^*)$ is given by the same inequalities as above, except for the last one.

The defining inequalities of $\trop(\EEE\PP_{12}^*)$ are
\begin{align*}
-z_{2^6}+z_{4,2^4}&\ge0,\\
-z_{4^2,2^2}+z_{6,2^3}&\ge0,\\
-z_{4^3}+z_{6,4,2}&\ge0,\\ 
-z_{6,2^3}+z_{6,4,2}&\ge0,\\
-z_{6^2}+z_{8,4}&\ge0,\\ 
-z_{8,2^2}+z_{8,4}&\ge0,\\
z_{2^6}-2z_{4,2^4}+z_{4^2,2^2}&\ge0,\\ 
z_{2^6}-2z_{6,2^3}+z_{6^2}&\ge0,\\
z_{4,2^4}-2z_{4^2,2^2}+z_{4^3}&\ge0,\\ 
z_{4,2^4}-2z_{6,2^3}+z_{8,2^2}&\ge0,\\
z_{4^2,2^2}-2z_{6,4,2}+z_{6^2}&\ge0,\\
z_{4^3}-2z_{6,4,2}+z_{8,2^2}&\ge0,\\
z_{6,2^3}-2z_{8,2^2}+z_{10,2}&\ge0,\\
z_{6^2}-2z_{8,4}+z_{10,2}&\ge0,\\ 
z_{8,2^2}-2z_{10,2}+z_{12}&\ge0,\\ 
z_{2^6}-3z_{6,2^3}+z_{6,4,2}+z_{8,2^2}&\ge0,\\
z_{4^2,2^2}+z_{4^3}-3z_{6,4,2}+z_{10,2}&\ge0,\\
z_{4^3}+z_{6,2^3}-3z_{6,4,2}+z_{6^2}&\ge0,\\
z_{4^3}+z_{6^2}-3z_{8,4}+z_{12}&\ge0.
\end{align*}

The last four facets are witnesses of the difference of the limit cones. The first 15 defining inequalities above are also defining inequalities of $\trop(\EEE\SS_{12}^*)$, which has also the additional facet $z_{4^3}-2z_{8,4}+z_{12}\ge0$. 

$\trop(\EEE\PP_{14}^*)$ has 59 facets, $\trop(\EEE\SS_{14}^*)$ has 28 facets (27 in common and the additional facet $z_{4^3,2}-2z_{8,4,2}+z_{12,2}\ge0$). An example of a facet of $\trop(\EEE\PP_{14}^*)$ which is not a facet of $\trop(\EEE\SS_{14}^*)$ is $z_{8,4,2}+z_{8,6}-3z_{10,4}+z_{14}\ge0$.  

$\trop(\EEE\PP_{16}^*)$ has 165 facets of which 118 are of not of the form $e_i-e_j\ge0$ or $e_i-2e_j+e_k\ge0$. An example of a facet of $\trop(\EEE\PP_{16}^*)$ which is not a facet of $\trop(\EEE\SS_{16}^*)$ is $z_{10,4,2}+z_{10,6}-3z_{12,4}+z_{16}\ge0$
\end{proof}

\end{document}